\documentclass[preprint,3p,12pt,times]{elsarticle}


\journal{Elsevier}


\bibliographystyle{model1-num-names}








\usepackage{amsmath}
\usepackage{amssymb}
\usepackage{mathrsfs}
\def\divv{\mathrm{d}}
\def\diff{\,\mathrm{d}}

\newdefinition{defn}{Definition}[section]
\newdefinition{rem}{Remark}
\newdefinition{exam}{Example}
\newtheorem{thm}{Theorem}[section]
\newtheorem{lem}[thm]{Lemma}

\newproof{proof}{Proof}

\renewcommand{\Re}{\mathop{\mathrm{Re}}\nolimits}
\renewcommand{\Im}{\mathop{\mathrm{Im}}\nolimits}

\DeclareMathOperator{\rme}{e}
\DeclareMathOperator{\imnum}{i}
\DeclareMathOperator{\arcsinh}{arcsinh}

\renewcommand{\pi}{\piup}

\newcommand{\textSE}[1]{\text{\tiny{\rm{SE$#1$}}}}
\newcommand{\textDE}[1]{\text{\tiny{\rm{DE$#1$}}}}
\newcommand{\textSEg}{\text{\scriptsize{\rm{SE}}}}
\newcommand{\textDEg}{\text{\scriptsize{\rm{DE}}}}
\newcommand{\SEt}[1]{\psi_{\textSE{#1}}}
\newcommand{\DEt}[1]{\psi_{\textDE{#1}}}

\newcommand{\SEtInv}[1]{\SEt{#1}^{-1}}
\newcommand{\DEtInv}[1]{\DEt{#1}^{-1}}

\newcommand{\domD}{\mathscr{D}}
\newcommand{\LC}{\mathbf{L}}

\newcommand{\Hone}{\mathbf{H}^1}

\numberwithin{equation}{section}

\begin{document}

\begin{frontmatter}

\title{Error estimates with explicit constants for the Sinc approximation
over infinite intervals\tnoteref{mytitlenote}}
\tnotetext[mytitlenote]{This work was partially supported by Grant-in-Aid for
Young Scientists (B) Number JP24760060.}


\author{Tomoaki Okayama}
\address{Graduate School of Information Sciences, Hiroshima City University\\
3-4-1, Ozuka-higashi, Asaminami-ku, Hiroshima 731-3194, Japan}
\ead{okayama@hiroshima-cu.ac.jp}


%

\begin{abstract}
The Sinc approximation is a function approximation formula
that attains exponential convergence for
rapidly decaying functions defined on the whole real axis.
Even for other functions, the Sinc approximation works accurately
when combined with a proper variable transformation.
The convergence rate has been analyzed for typical cases
including finite, semi-infinite, and infinite intervals.
Recently, for verified numerical computations,
a more explicit, ``computable'' error bound
has been given in the case of a finite interval.
In this paper, such explicit error bounds are derived
for other cases.
\end{abstract}

\begin{keyword}
Sinc approximation
\sep
conformal map
\sep
double-exponential transformation
\sep
infinite interval
\sep
error bound
\MSC[2010] 65D05 \sep 65D15 \sep 65G99
\end{keyword}

\end{frontmatter}



\section{Introduction}
\label{sec:introduction}

The ``Sinc approximation'' is a function approximation formula that can be expressed as
\begin{equation}
F(x)\approx \sum_{k=-M}^{N} F(kh) S(k,h)(x),\quad x\in\mathbb{R},
\label{eq:Sinc-approx}
\end{equation}
where $S(k,h)(x)$ is the so-called Sinc function, defined by
\[
 S(k,h)(x)=\frac{\sin[\pi(x/h - k)]}{\pi(x/h - k)},
\]
and $M$, $N$, $h$ are selected according to
the given positive integer $n$.
It is well known that
the approximation formula~\eqref{eq:Sinc-approx}
can converge \emph{exponentially}
when combined with an appropriate variable transformation.
Furthermore,
it has been shown that
the approximation formula~\eqref{eq:Sinc-approx} is nearly optimal
for functions in some Hardy spaces~\cite{stenger78:_optim,sugihara03:_near}.
Motivated by this fact,
many researchers have studied applications of the Sinc approximation
(see Stenger~\cite{stenger93:_numer,stenger00:_summar},
Lund--Bowers~\cite{lund92:_sinc},
Sugihara--Matsuo~\cite{sugihara04:_recen},
and references therein).

As stated above, to attain exponential convergence,
an appropriate variable transformation $t=\psi(x)$
is required.
In this regard, depending on the target interval $(a,\,b)$
and the function $f$,
Stenger~\cite{stenger93:_numer} considered the following four typical cases:
\begin{enumerate}
 \item $(a,\,b)=(-\infty,\,\infty)$ and $|f(t)|$ decays algebraically
as $t\to \pm\infty$,
 \item $(a,\,b)=(0,\,\infty)$ and $|f(t)|$ decays algebraically
as $t\to \infty$,
 \item $(a,\,b)=(0,\,\infty)$ and $|f(t)|$ decays (already) exponentially
as $t\to \infty$,
 \item The interval $(a,\,b)$ is finite.
\end{enumerate}
In all four cases, Stenger gave the concrete transformations to be employed:
\begin{align*}
\SEt{1}(x)=\sinh x,&&
\SEt{2}(x)=\mathrm{e}^x,&&
\SEt{3}(x)=\arcsinh(\mathrm{e}^x),&&
\SEt{4}(x)=\frac{b-a}{2}\tanh\left(\frac{x}{2}\right)+\frac{b+a}{2}.
\end{align*}
These are called conformal maps in the literature.
After applying the variable transformation $t=\SEt{i}(x)$,
we may set $F(x)=f(\SEt{i}(x))$ and use
the Sinc approximation~\eqref{eq:Sinc-approx}.
As a result, we have the following approximation for $f(t)$:

\begin{equation}
f(t) \approx \sum_{k=-M}^N f(\SEt{i}(kh))S(k,h)(\SEtInv{i}(t)),
\label{eq:SE-Sinc-approx}
\end{equation}
which is referred to as the ``SE-Sinc approximation'' in this paper.
Stenger~\cite{stenger93:_numer} demonstrated the exponential convergence
of the approximation~\eqref{eq:SE-Sinc-approx}
by giving error analyses
in the following form:
\begin{align}
 \sup_{t\in (a,\,b)}
\left|
f(t)
-\sum_{k=-M}^N f(\SEt{i}(kh))S(k,h)(\SEtInv{i}(t))
\right|
&\leq C \sqrt{n} \rme^{-\sqrt{\pi d \mu n}},
\label{leq:SE-Sinc-error}
\end{align}
where $\mu$ and $d$ are positive parameters of the analytic properties of $f$,
and $C$ is a constant independent of $n$.

In recent decades,
several authors~\cite{sugihara04:_recen,mori01:_doubl,tanaka09:_desinc}
have pointed out that
the performance of the Sinc approximation
can be improved by replacing the original variable transformations
with the following ones:
\begin{align*}
\DEt{1}(t)=\SEt{1}\left(\frac{\pi}{2}\sinh t\right),&&
\DEt{2}(t)=\SEt{2}\left(\frac{\pi}{2}\sinh t\right),&&
\DEt{3\dagger}(t)=\mathrm{e}^{t-\exp(-t)},&&
\DEt{4}(t)=\SEt{4}\left(\pi\sinh t\right).
\end{align*}
These are called the ``Double-Exponential (DE) transformations,''
and were originally introduced by Takahasi--Mori~\cite{takahasi74:_doubl}
for numerical integration.
In addition,
in case~3, another DE transformation
\begin{equation*}
\DEt{3}(t)=\log(1+\rme^{(\pi/2)\sinh t})
\end{equation*}
has been proposed~\cite{muhammad03:_doubl}
so that the inverse function can be explicitly written
by using elementary functions
(whereas $\DEt{3\dagger}(t)$ cannot).
The combination of the DE transformation and~\eqref{eq:Sinc-approx}
is called the ``DE-Sinc approximation.''
The error analyses
for $i=1$, $2$, $3$, $4$ have been given~\cite{tanaka09:_desinc} as
\begin{align}
 \sup_{t\in (a,\,b)}
\left|
f(t)
-\sum_{k=-n}^n f(\DEt{i}(kh))S(k,h)(\DEtInv{i}(t))
\right|
&\leq C \rme^{-\pi d n/\log(4 d n/\mu)},
\label{leq:DE-Sinc-error}
\intertext{and for $\DEt{3\dagger}(t)$, also given~\cite{tanaka09:_desinc} as}
 \sup_{t\in(0,\,\infty)}
\left|
f(t)
-\sum_{k=-n}^n f(\DEt{3\dagger}(kh))S(k,h)(\DEtInv{3\dagger}(t))
\right|
&\leq C \rme^{-\pi d n/\log(\pi d n/\mu)}.
\label{leq:DE3-Sinc-error}
\end{align}

The main objective of this study is
to refine these error analyses into more useful forms.
It should be emphasized that
the error analyses~\eqref{leq:SE-Sinc-error},
\eqref{leq:DE-Sinc-error},
and~\eqref{leq:DE3-Sinc-error}
are not just asymptotic ($\simeq$),
but are strict inequalities ($\leq$).
Hence, if the constants $C$ are given
in a more explicit, \emph{computable} form,
we can use the term on the right-hand side as a rigorous error bound,
which is quite useful for verified numerical computations.
In fact, the explicit form of $C$ was revealed
in case~4 (the interval is finite)~\cite{okayama09:_error}.
This study reveals the explicit form of $C$
in cases 1--3 (the interval is not finite).

As a second objective, this paper improves
the DE transformation in case~3.
Instead of $\DEt{3}(t)$ or $\DEt{3\dagger}(t)$,
\begin{equation*}
\DEt{3\ddagger}(t)=\log(1 + \rme^{\pi\sinh t})
\end{equation*}
is employed in this paper,
which was originally proposed
for numerical integration~\cite{okayama12:_error}.
The error is also given as
\begin{equation}
 \sup_{t\in(0,\,\infty)}
\left|
f(t)
-\sum_{k=-n}^n f(\DEt{3\ddagger}(kh))S(k,h)(\DEtInv{3\ddagger}(t))
\right|
\leq C \rme^{-\pi d n/\log(2 d n/\mu)},
\label{leq:DE3ddagger-Sinc-error}
\end{equation}
with the explicit constant $C$.
The convergence rate of~\eqref{leq:DE3ddagger-Sinc-error}
is better than that of~\eqref{leq:DE-Sinc-error}
or~\eqref{leq:DE3-Sinc-error}.
Furthermore, in the same manner as
for numerical integration~\cite{okayama12:_error},
it can be shown that
$\DEt{3\ddagger}$ is the best
possible variable transformation in case~3
(although this is not discussed in this paper).

The remainder of this paper is organized as follows.
In Section~\ref{sec:Sinc-approx-estimates},
existing error analyses and
new explicit error bounds (the main result) are stated.
Numerical examples are presented in Section~\ref{sec:numer-exam}.
Proofs of all the theorems
stated in this paper are given in Section~\ref{sec:proofs}.
Section~\ref{sec:conclusion} draws together our conclusions.

\section{Existing error analyses and new error estimates with explicit constants}
\label{sec:Sinc-approx-estimates}

In this section,
after reviewing existing results,
new error bounds for the SE- and DE-Sinc approximations are stated.
Let us first introduce some notation.
Let $\domD_d$ be a strip domain
defined by $\domD_d=\{\zeta\in\mathbb{C}:|\Im \zeta|< d\}$
for $d>0$.
Furthermore, let
$\domD_d^{-}=\{\zeta\in\domD_d :\Re\zeta< 0\}$
and
$\domD_d^{+}=\{\zeta\in\domD_d :\Re\zeta\geq 0\}$.
In this section,
$d$ is assumed to be a positive constant with $d<\pi/2$.
Let $\psi(\domD_d)$ denote the image of $\domD_d$ given by the map $\psi$,
i.e.,
$\psi(\domD_d)=\left\{z=\psi(\zeta): \zeta\in\domD_d \right\}$.
%
Let $I_1=(-\infty,\,\infty)$, $I_2=I_3=(0,\,\infty)$, and
let us define the following three functions:
\begin{align*}
E_1(z;\gamma)&=\frac{1}{(1+z^2)^{\gamma/2}},\\
E_2(z;\alpha,\beta)&=\frac{z^{\alpha}}{(1+z^2)^{(\alpha+\beta)/2}},\\
E_3(z;\alpha,\beta)&=\left(\frac{z}{1+z}\right)^{\alpha}\rme^{-\beta z}.
\end{align*}
We write $E_i(z;\gamma,\gamma)$ as $E_i(z;\gamma)$ for short.

\subsection{Existing error analyses and new error estimates for the SE-Sinc approximation}
\label{subsec:main-SE-Sinc-approx}

Existing error analyses for the Sinc approximation combined
with $\SEt{1}$, $\SEt{2}$, and $\SEt{3}$ are written in the following form
(Theorems~\ref{thm:SE1-Sinc} and~\ref{thm:SE2-Sinc}).

\begin{thm}[Stenger~{\cite[Theorem~4.2.5]{stenger93:_numer}}]
\label{thm:SE1-Sinc}
Assume that $f$ is analytic in $\SEt{1}(\domD_d)$, and
that there exist positive constants $K$, $\alpha$, and $\beta$ such that
\begin{align}
 |f(z)|&\leq K |E_1(z;\alpha)|
\label{leq:Sinc-case1-alpha}
\intertext{for all $z\in\SEt{1}(\domD_d^{-})$, and}
 |f(z)|&\leq K |E_1(z;\beta)|
\label{leq:Sinc-case1-beta}
\end{align}
for all $z\in\SEt{1}(\domD_d^{+})$.
Let $\mu=\min\{\alpha,\,\beta\}$,
let $h$ be defined as
\begin{equation}
h=\sqrt{\frac{\pi d}{\mu n}},
\label{eq:Def-SE-h}
\end{equation}
and let $M$ and $N$ be defined as
\begin{equation}
\begin{cases}
M=n,\quad N=\lceil\alpha n/\beta\rceil
 & \,\,\,(\text{if}\,\,\,\mu = \alpha),\\
N=n,\quad M=\lceil\beta n/\alpha\rceil
 &  \,\,\,(\text{if}\,\,\,\mu = \beta).
\end{cases}
\label{eq:Def-SE-Sinc-MN}
\end{equation}
Then, there exists a constant $C_1$, independent of $n$, such that
\begin{equation}
\sup_{t\in I_1}\left|
f(t)
- \sum_{k=-M}^N f(\SEt{1}(kh))S(k,h)(\SEtInv{1}(t))
\right|
\leq C_1\sqrt{n}\rme^{-\sqrt{\pi d \mu n}}.
\label{leq:SE1-Sinc}
\end{equation}
\end{thm}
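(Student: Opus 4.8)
The plan is to transfer the problem to the real line via the conformal map and then invoke the classical Sinc cardinal-series theory. Set $F(\zeta)=f(\SEt{1}(\zeta))=f(\sinh\zeta)$. Since $\SEt{1}(\domD_d)$ avoids the branch points $z=\pm\imnum$ of $E_1$ precisely because $d<\pi/2$ (indeed $\sinh\zeta=\pm\imnum$ forces $\Im\zeta=\pm\pi/2$), the composite $F$ is analytic in $\domD_d$. The error on the left-hand side of \eqref{leq:SE1-Sinc} equals $\sup_{x\in I_1}|F(x)-\sum_{k=-M}^N F(kh)S(k,h)(x)|$, so it suffices to estimate the Sinc approximation error of $F$ on $\mathbb{R}$.

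First I would split the error into a discretization (interpolation) part and a truncation part,
\[
\Bigl|F(x)-\sum_{k=-M}^N F(kh)S(k,h)(x)\Bigr|
\le \Bigl|F(x)-\sum_{k=-\infty}^{\infty} F(kh)S(k,h)(x)\Bigr|
+\sum_{k<-M}|F(kh)|+\sum_{k>N}|F(kh)|,
\]
using $|S(k,h)(x)|\le 1$. The key identity $1+\sinh^2\zeta=\cosh^2\zeta$ gives $|E_1(\sinh\zeta;\gamma)|=|\cosh\zeta|^{-\gamma}$, so the hypotheses \eqref{leq:Sinc-case1-alpha}--\eqref{leq:Sinc-case1-beta} translate into $|F(\zeta)|\le K|\cosh\zeta|^{-\alpha}$ on $\domD_d^{-}$ and $|F(\zeta)|\le K|\cosh\zeta|^{-\beta}$ on $\domD_d^{+}$.

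For the truncation part, on the real axis $|\cosh(kh)|^{-\gamma}\le 2^{\gamma}\rme^{-\gamma|kh|}$, whence $\sum_{k>N}|F(kh)|\le K2^{\beta}\rme^{-\beta(N+1)h}/(1-\rme^{-\beta h})$ and, symmetrically, $\sum_{k<-M}|F(kh)|\le K2^{\alpha}\rme^{-\alpha(M+1)h}/(1-\rme^{-\alpha h})$. For the discretization part, I would verify that $F$ belongs to the Hardy space $\Hone$ on $\domD_d$ and apply the standard cardinal-series estimate, which bounds it by a constant multiple of $\|F\|_{\Hone}\,\rme^{-\pi d/h}/(1-\rme^{-2\pi d/h})$. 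Checking $F\in\Hone$ amounts to showing that the boundary integrals $\int_{-\infty}^{\infty}|F(\xi\pm\imnum d)|\diff\xi$ are finite: splitting at $\xi=0$ and using $|\cosh(\xi\pm\imnum d)|^2=\sinh^2\xi+\cos^2 d$, the integrand decays like $\rme^{-\alpha|\xi|}$ as $\xi\to-\infty$ and like $\rme^{-\beta\xi}$ as $\xi\to+\infty$, so the norm is finite and, crucially, independent of $n$.

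Finally I would substitute the prescribed values. With $h=\sqrt{\pi d/(\mu n)}$ one has $\pi d/h=\sqrt{\pi d\mu n}$, so the discretization part is $\Order(\rme^{-\sqrt{\pi d\mu n}})$. Taking $\mu=\alpha$ (the case $\mu=\beta$ being symmetric), the choices $M=n$ and $N=\lceil\alpha n/\beta\rceil$ give $\alpha Mh=\sqrt{\pi d\mu n}$ and $\beta Nh\ge\sqrt{\pi d\mu n}$, so both truncation tails are $\Order(\rme^{-\sqrt{\pi d\mu n}})$ as well; the algebraic factor $1/(1-\rme^{-\beta h})\sim 1/(\beta h)=\Order(\sqrt n)$ coming from the geometric sums produces the $\sqrt n$ in \eqref{leq:SE1-Sinc}. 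Collecting the three contributions yields the claimed bound with a constant $C_1$ depending only on $K$, $d$, $\alpha$, and $\beta$. I expect the main obstacle to be the verification that $F\in\Hone$ with an $n$-independent norm --- in particular, controlling the boundary behaviour near $\xi=0$ (where $d<\pi/2$ guarantees $\cosh(\xi\pm\imnum d)\ne 0$) and confirming that the integrals over the vertical segments vanish at infinity, as required for the cardinal-series estimate to apply.
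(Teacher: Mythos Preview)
Your proposal is correct and follows essentially the same route as the paper: pull back to the strip via $\zeta\mapsto\sinh\zeta$, split into a discretization part controlled by the Hardy-space bound $\rme^{-\pi d/h}$ and a truncation part controlled by the exponential decay of $|\cosh\zeta|^{-\gamma}$, then balance the two with the choices \eqref{eq:Def-SE-h}--\eqref{eq:Def-SE-Sinc-MN}. The paper merely organizes this computation through the auxiliary space $\LC_{L,R,\alpha,\beta,\gamma}^{\textSEg}(\domD_d)$ and the packaged Theorem~\ref{thm:SE-Sinc-overall} (whose proof is omitted there), noting that $|E_1(\sinh\zeta;\gamma)|=|\cosh\zeta|^{-\gamma}$ is exactly the bound $|1+\rme^{-2\zeta}|^{-\gamma/2}|1+\rme^{2\zeta}|^{-\gamma/2}$ appearing in that definition with $\gamma=2$; so your direct argument is precisely what underlies the paper's framework.
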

\begin{thm}[Stenger~{\cite[Theorem~4.2.5]{stenger93:_numer}}]
\label{thm:SE2-Sinc}
The following is true for $i=2$, $3$.
Assume that $f$ is analytic in $\SEt{i}(\domD_d)$, and
that there exist positive constants $K$, $\alpha$, and $\beta$ such that
\begin{equation}
 |f(z)|\leq K | E_i(z;\alpha,\beta) |
\label{leq:Sinc-case2-alpha-beta}
\end{equation}
for all $z\in\SEt{i}(\domD_d)$.
Let $\mu=\min\{\alpha,\,\beta\}$,
let $h$ be defined as in~\eqref{eq:Def-SE-h},
and let $M$ and $N$ be defined as in~\eqref{eq:Def-SE-Sinc-MN}.
Then, there exists a constant $C_i$, independent of $n$, such that
\begin{equation}
\sup_{t\in I_i}
\left|
f(t)
- \sum_{k=-M}^N f(\SEt{i}(kh))S(k,h)(\SEtInv{i}(t))
\right|
\leq C_i \sqrt{n}\rme^{-\sqrt{\pi d \mu n}}.
\label{leq:SE2-Sinc}
\end{equation}
\end{thm}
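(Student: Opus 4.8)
The plan is to treat the cases $i=2,3$ uniformly by passing to the transformed function $F=f\circ\SEt{i}$ on the strip $\domD_d$. Since $f$ is analytic on $\SEt{i}(\domD_d)$ and $\SEt{i}$ maps $\domD_d$ conformally, $F$ is analytic on $\domD_d$; and because $\SEtInv{i}$ is a bijection from $I_i$ onto $\Rset$, the supremum over $t\in I_i$ in~\eqref{leq:SE2-Sinc} equals the supremum over $x=\SEtInv{i}(t)\in\Rset$. It therefore suffices to bound, uniformly in $x\in\Rset$, the error of the real-line Sinc approximation of $F$.

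First I would split the error, with $x=\SEtInv{i}(t)$, into the \emph{discretization (interpolation) error} over all of $\mathbb{Z}$ plus the two \emph{truncation} tails:
\begin{align*}
f(t) - \sum_{k=-M}^N F(kh)\,S(k,h)(x)
&= \left[F(x) - \sum_{k=-\infty}^{\infty} F(kh)\,S(k,h)(x)\right] \\
&\quad + \sum_{k=-\infty}^{-M-1} F(kh)\,S(k,h)(x)
       + \sum_{k=N+1}^{\infty} F(kh)\,S(k,h)(x).
\end{align*}
For the discretization term I would invoke the standard Sinc interpolation error estimate on $\domD_d$: for $F\in\Hone(\domD_d)$, $\sup_{x\in\Rset}\bigl|F(x)-\sum_{k=-\infty}^{\infty}F(kh)S(k,h)(x)\bigr|$ is bounded by a constant multiple of $\rme^{-\pi d/h}/(1-\rme^{-2\pi d/h})$ times $\|F\|_{\Hone(\domD_d)}$. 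The tails I would control with $|S(k,h)(x)|\le 1$ for real $x$, together with the real-axis decay implied by~\eqref{leq:Sinc-case2-alpha-beta}: tracking $E_i$ along $z=\SEt{i}(kh)$ gives $|F(kh)|\le K\rme^{\alpha kh}$ for $k<0$ and $|F(kh)|\le K\rme^{-\beta kh}$ for $k>0$, so each tail is dominated by a geometric series summing to $O(\rme^{-\alpha Mh}/(1-\rme^{-\alpha h}))$ and $O(\rme^{-\beta Nh}/(1-\rme^{-\beta h}))$, respectively.

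Finally I would insert $h=\sqrt{\pi d/(\mu n)}$ from~\eqref{eq:Def-SE-h} and the balanced choice of $M,N$ from~\eqref{eq:Def-SE-Sinc-MN}. With these, $\rme^{-\pi d/h}=\rme^{-\sqrt{\pi d\mu n}}$, while the exponents $\alpha Mh$ and $\beta Nh$ both reduce to (at least) $\sqrt{\pi d\mu n}$; since $h\to0$, the factor $1/(1-\rme^{-2\pi d/h})$ stays bounded, whereas $1/(1-\rme^{-\alpha h})$ and $1/(1-\rme^{-\beta h})$ grow like $O(1/h)=O(\sqrt{n})$. Collecting terms yields exactly $C_i\sqrt{n}\,\rme^{-\sqrt{\pi d\mu n}}$ with $C_i$ independent of $n$, the $\sqrt{n}$ originating from the truncation denominators.

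The main obstacle is verifying $F\in\Hone(\domD_d)$, i.e.\ that $|f(\SEt{i}(x+\imnum y))|$ is integrable along, and decays at both ends of, the strip uniformly in $|y|<d$. For $i=2$ this is immediate from $\SEt{2}(z)=\rme^{z}$, for which the modulus of $E_2$ is transparent. For $i=3$, however, the non-elementary map $\arcsinh(\rme^{z})$ forces a careful estimate of $E_3$ along the image boundary curves $\SEt{3}(\Rset\pm\imnum d)$, and it is there that the genuine work lies.
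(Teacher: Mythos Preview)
Your outline is correct and follows the classical Stenger argument: transform to $F=f\circ\SEt{i}$ on the strip, split into discretization and truncation errors, bound the former via the $\Hone(\domD_d)$ estimate (Theorem~\ref{Thm:Sinc-Infinite-Sum-Approx}) and the latter via geometric tails using $|S(k,h)(x)|\le 1$, then balance with the choice~\eqref{eq:Def-SE-h}--\eqref{eq:Def-SE-Sinc-MN}. The paper itself does not prove Theorem~\ref{thm:SE2-Sinc}---it is quoted from Stenger---but its proof of the sharper explicit-constant version (Theorem~\ref{thm:SE2-Sinc-Explicit}) packages the same split through the auxiliary space $\LC^{\textSEg}_{L,R,\alpha,\beta,\gamma}(\domD_d)$, whose two defining inequalities~\eqref{ineq:LC-SE-complex} and~\eqref{ineq:LC-SE-real} encode respectively the complex-strip decay needed for $\mathcal{N}_1(F,d)$ and the real-axis decay needed for the tails; Theorem~\ref{thm:SE-Sinc-overall} then delivers the full bound for any $F$ in this space, and the remaining work is to verify membership. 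For $i=2$ that verification is a one-line computation; for $i=3$ it rests on the explicit $\arcsinh$ inequalities of Lemmas~\ref{lem:asinh-SE} and~\ref{lem:exp-asinh-SE}, which are exactly the ``genuine work'' you correctly flag. The advantage of the paper's route is that the weights $|1+\rme^{\pm\gamma\zeta}|^{-\cdot}$ are chosen so that the constants in Theorem~\ref{thm:SE-Sinc-overall} come out explicitly; for the bare existence of $C_i$ that Theorem~\ref{thm:SE2-Sinc} asserts, your direct argument is equally valid and a little lighter.
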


This paper explicitly estimates the constant $C_i$
in~\eqref{leq:SE1-Sinc} and~\eqref{leq:SE2-Sinc}
as follows.

\begin{thm}
\label{thm:SE1-Sinc-Explicit}
Let the assumptions in Theorem~\ref{thm:SE1-Sinc} be fulfilled.
Furthermore, let $\nu=\max\{\alpha,\,\beta\}$.
Then, inequality~\eqref{leq:SE1-Sinc} holds with
\[
 C_1=\frac{2^{\nu+1}K}{\sqrt{\pi d \mu}}
\left\{
 \frac{2}{\sqrt{\pi d \mu}(1-\rme^{-2\sqrt{\pi d \mu}})\{\cos d\}^{\nu}} + 1
\right\}.
\]
\end{thm}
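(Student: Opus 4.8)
The plan is to pass to the transformed variable and split the error into an \emph{interpolation} part and a \emph{truncation} part, each to be bounded with explicit constants. Writing $F(x)=f(\SEt{1}(x))$ and $x=\SEtInv{1}(t)$, the quantity to be estimated equals
\[
\sup_{x\in\mathbb{R}}\left|F(x)-\sum_{k=-M}^{N}F(kh)S(k,h)(x)\right|,
\]
which I would decompose as
\[
\left(F(x)-\sum_{k=-\infty}^{\infty}F(kh)S(k,h)(x)\right)+\left(\sum_{k=-\infty}^{-M-1}+\sum_{k=N+1}^{\infty}\right)F(kh)S(k,h)(x).
\]
The first bracket is the interpolation error of $F$ over the full lattice; the second is the truncation error from discarding the tails.

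For the interpolation error I would invoke the classical estimate for the Sinc interpolation of a function in $\Hone(\domD_d)$, which bounds it by a constant multiple of $N_1(F,\domD_d)\rme^{-\pi d/h}/\{\pi d(1-\rme^{-2\pi d/h})\}$, where $N_1(F,\domD_d)=\lim_{y\to d}\int_{\mathbb{R}}\{|F(x+\imnum y)|+|F(x-\imnum y)|\}\diff x$. The crux is therefore an explicit bound on $N_1(F,\domD_d)$. Using $\SEt{1}(x)=\sinh x$ one has $1+\sinh^2(x+\imnum d)=\cosh^2(x+\imnum d)$ and $|\cosh(x+\imnum d)|^2=\cosh^2 x-\sin^2 d\geq(\cosh x\cos d)^2$, so hypotheses \eqref{leq:Sinc-case1-alpha} and \eqref{leq:Sinc-case1-beta} give $|F(x\pm\imnum d)|\leq K/\{(\cos d)^{\gamma}\cosh^{\gamma}x\}$ with $\gamma=\alpha$ on $x<0$ and $\gamma=\beta$ on $x\geq0$. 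Combining this with $\cosh x\geq\rme^{|x|}/2$ and integrating the resulting exponentials yields $N_1(F,\domD_d)\leq 2^{\nu+2}K/\{\mu(\cos d)^{\nu}\}$, where passing from $(\alpha,\beta)$ to $(\mu,\nu)$ uses $\cos d<1$ and $\mu\leq\alpha,\beta\leq\nu$. Substituting $h$ from \eqref{eq:Def-SE-h}, so that $\pi d/h=\sqrt{\pi d\mu n}$, and bounding $1-\rme^{-2\sqrt{\pi d\mu n}}\geq 1-\rme^{-2\sqrt{\pi d\mu}}$ via $n\geq1$, produces the first term inside the braces of $C_1$.

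For the truncation error I would use $|S(k,h)(x)|\leq1$ together with the pointwise decay of $F$ on the real lattice: for $kh\geq0$, $|F(kh)|\leq K\cosh^{-\beta}(kh)\leq K2^{\beta}\rme^{-\beta kh}$, and symmetrically with $\alpha$ for $kh<0$. Summing the geometric tails gives $\sum_{k=N+1}^{\infty}|F(kh)|\leq K2^{\beta}\rme^{-\beta Nh}/(\rme^{\beta h}-1)$ and likewise on the left. The choice \eqref{eq:Def-SE-Sinc-MN} is engineered precisely so that $\alpha Mh\geq\sqrt{\pi d\mu n}$ and $\beta Nh\geq\sqrt{\pi d\mu n}$; combined with $\rme^{\gamma h}-1\geq\gamma h\geq\mu h$ and $2^{\alpha},2^{\beta}\leq 2^{\nu}$, this bounds the total truncation error by $2^{\nu+1}K\rme^{-\sqrt{\pi d\mu n}}/(\mu h)=(2^{\nu+1}K/\sqrt{\pi d\mu})\sqrt n\,\rme^{-\sqrt{\pi d\mu n}}$, which is the remaining ($+1$) term. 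Adding the two contributions and factoring out $2^{\nu+1}K/\sqrt{\pi d\mu}$ gives the stated $C_1$.

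I expect the main obstacle to be the explicit estimate of $N_1(F,\domD_d)$: one must first verify $F\in\Hone(\domD_d)$, then control $|f(\sinh(x\pm\imnum d))|$ uniformly over the whole line using the two-sided decay hypotheses split at $\Re\zeta=0$, and finally carry out the geometry of $\sinh$ on the strip boundary sharply enough that the factors $(\cos d)^{\nu}$ and $2^{\nu}$ emerge cleanly. By contrast, the truncation estimate is comparatively routine once the identity $\sum_{k>N}\rme^{-\gamma kh}=\rme^{-\gamma Nh}/(\rme^{\gamma h}-1)$ and the inequality $\rme^{\gamma h}-1\geq\gamma h$ are in hand.
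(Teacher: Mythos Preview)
Your proposal is correct and the constants you obtain match $C_1$ exactly. The route, however, differs from the paper's. Rather than working directly in $\Hone(\domD_d)$ and estimating $\mathcal{N}_1(F,d)$ from the geometry of $\sinh$ on the strip boundary, the paper factors the argument through an intermediate weighted class
\[
\LC_{L,R,\alpha,\beta,\gamma}^{\textSEg}(\domD_d)
=\Bigl\{F:\ |F(\zeta)|\le \tfrac{L}{|1+\rme^{-\gamma\zeta}|^{\alpha/\gamma}|1+\rme^{\gamma\zeta}|^{\beta/\gamma}},\
|F(x)|\le \tfrac{R}{(1+\rme^{-\gamma x})^{\alpha/\gamma}(1+\rme^{\gamma x})^{\beta/\gamma}}\Bigr\},
\]
proves a single generic bound (Theorem~\ref{thm:SE-Sinc-overall}) for any $F$ in this class, and then shows via a lemma quoted from~\cite{okayama12:_error} that $F=f\circ\SEt{1}$ lies in $\LC_{L,R,\alpha,\beta,2}^{\textSEg}(\domD_d)$ with $L=2^{\nu}K/\{\cos d\}^{(\nu-\mu)/2}$ and $R=2^{\nu}K$. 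Substituting these into Theorem~\ref{thm:SE-Sinc-overall} produces $C_1$ (the exponent on $\cos d$ becomes $(\alpha+\beta)/2+(\nu-\mu)/2=\nu$). Your direct computation is essentially what lives inside the omitted proof of Theorem~\ref{thm:SE-Sinc-overall} specialized to $\gamma=2$; the paper's abstraction buys a uniform treatment of cases~1--3 (each reduces to checking membership in $\LC^{\textSEg}$), while your argument is more self-contained since it does not rely on the external lemma or the omitted proof.
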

\begin{thm}
\label{thm:SE2-Sinc-Explicit}
Let the assumptions in Theorem~\ref{thm:SE2-Sinc} be fulfilled.
Then, inequality~\eqref{leq:SE2-Sinc} holds with
\begin{align*}
 C_2&=\frac{2K}{\sqrt{\pi d \mu}}
\left\{
 \frac{2}{\sqrt{\pi d \mu}(1-\rme^{-2\sqrt{\pi d \mu}})\{\cos d\}^{(\alpha+\beta)/2}} + 1
\right\},\\
C_3&=\frac{2 K}{\sqrt{\pi d \mu}}
\left\{
 \frac{2^{1+(\alpha+\beta)/2}}{\sqrt{\pi d \mu}(1-\rme^{-2\sqrt{\pi d \mu}})\{\cos(d/2)\}^{\alpha+\beta}} + 1
\right\}.
\end{align*}
\end{thm}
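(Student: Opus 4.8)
The plan is to establish~\eqref{leq:SE2-Sinc} with the explicit constants by reusing the two–part decomposition standard for the Sinc approximation, exactly as in the case $i=1$ of Theorem~\ref{thm:SE1-Sinc-Explicit}. Writing $F=f\circ\SEt{i}$ and inserting and subtracting the infinite Sinc series, I would bound
\[
\sup_{t\in I_i}\left|f(t)-\sum_{k=-M}^{N}f(\SEt{i}(kh))S(k,h)(\SEtInv{i}(t))\right|\le E_{\mathrm{disc}}+E_{\mathrm{trunc}},
\]
where $E_{\mathrm{disc}}=\sup_{x\in\Rset}\left|F(x)-\sum_{k=-\infty}^{\infty}F(kh)S(k,h)(x)\right|$ is the discretization error and $E_{\mathrm{trunc}}$ gathers the terms with $k<-M$ or $k>N$. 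For $E_{\mathrm{disc}}$ I would invoke the classical Sinc discretization estimate for the Hardy space $\Hone(\domD_d)$, which controls it by a fixed constant multiple of $\dfrac{\rme^{-\pi d/h}}{1-\rme^{-2\pi d/h}}N_1(F,d)$, where $N_1(F,d)=\int_{-\infty}^{\infty}\bigl(|F(x+\imnum d)|+|F(x-\imnum d)|\bigr)\diff x$. With $h$ as in~\eqref{eq:Def-SE-h} one has $\pi d/h=\sqrt{\pi d\mu n}$, and $1-\rme^{-2\pi d/h}\ge 1-\rme^{-2\sqrt{\pi d\mu}}$ because $n\ge 1$; this already accounts for the factor $\rme^{-\sqrt{\pi d\mu n}}$ and the denominator $(1-\rme^{-2\sqrt{\pi d\mu}})$ in $C_2$ and $C_3$.

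For $E_{\mathrm{trunc}}$ I would use $|S(k,h)(x)|\le 1$, so that $E_{\mathrm{trunc}}\le\sum_{k<-M}|f(\SEt{2}(kh))|+\sum_{k>N}|f(\SEt{2}(kh))|$. By~\eqref{leq:Sinc-case2-alpha-beta} the summands are $\le K\rme^{\alpha kh}$ for $k<0$ and $\le K\rme^{-\beta kh}$ for $k>0$, so the two tails are geometric series dominated by $K\rme^{-\alpha Mh}/(\rme^{\alpha h}-1)$ and $K\rme^{-\beta Nh}/(\rme^{\beta h}-1)$ (and likewise for $i=3$). Applying $\rme^{x}-1\ge x$, the definitions~\eqref{eq:Def-SE-h} and~\eqref{eq:Def-SE-Sinc-MN}, and $\mu=\min\{\alpha,\beta\}$, each tail is at most $(K\sqrt{n}/\sqrt{\pi d\mu})\rme^{-\sqrt{\pi d\mu n}}$, whence $E_{\mathrm{trunc}}\le(2K/\sqrt{\pi d\mu})\sqrt{n}\,\rme^{-\sqrt{\pi d\mu n}}$. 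This is precisely the ``$+1$'' contribution inside the braces of both $C_2$ and $C_3$.

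The heart of the argument is the estimate of $N_1(F,d)$. In case~$2$, writing $z=\rme^{x\pm\imnum d}$ and $u=\rme^{2x}$ gives $|1+z^{2}|=|u+\rme^{2\imnum d}|=\sqrt{1+2u\cos(2d)+u^{2}}$ and $|z^{\alpha}|=u^{\alpha/2}$, so that $|f(\SEt{2}(x\pm\imnum d))|\le K\,u^{\alpha/2}/|u+\rme^{2\imnum d}|^{(\alpha+\beta)/2}$. The key elementary inequality is
\[
|u+\rme^{2\imnum d}|\ge(\cos d)\,(1+u)\qquad(u\ge 0,\ 0<d<\pi/2),
\]
which follows at once from $|u+\rme^{2\imnum d}|^{2}-(\cos d)^{2}(1+u)^{2}=(\sin^{2}d)(u-1)^{2}\ge 0$. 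Substituting $u=\rme^{2x}$ then reduces $N_1(F,d)$ to $(\cos d)^{-(\alpha+\beta)/2}$ times the Beta integral $\int_{0}^{\infty}u^{\alpha/2-1}(1+u)^{-(\alpha+\beta)/2}\diff u=\Bfunc(\alpha/2,\beta/2)$, which I would bound by splitting at $u=1$ to obtain $2/\alpha+2/\beta\le 4/\mu$. Combining this bound on $E_{\mathrm{disc}}$ with the truncation estimate yields the stated $C_2$.

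Case~$3$ proceeds along the same template, and estimating $N_1(F,d)$ here is the step I expect to be the main obstacle. Now $\SEt{3}(x\pm\imnum d)=\arcsinh(\rme^{x\pm\imnum d})$, so I must bound $|E_3(\SEt{3}(x\pm\imnum d);\alpha,\beta)|$ uniformly in $x$, which means controlling both $\Re\arcsinh(\rme^{x\pm\imnum d})$ (which drives the factor $\rme^{-\beta z}$) and $|z/(1+z)|$ with $z=\arcsinh(\rme^{x\pm\imnum d})$ (which drives the algebraic factor) along the horizontal lines $\Im\zeta=\pm d$. It is this delicate geometry of $\arcsinh$ on $\domD_d$ that produces the half-angle $\cos(d/2)$ and the additional power $2^{1+(\alpha+\beta)/2}$ in $C_3$. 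Once the pointwise bound on the integrand is established, the same substitution-and-split computation as in case~$2$ closes the estimate for $N_1(F,d)$, and assembling it with the truncation term gives $C_3$.
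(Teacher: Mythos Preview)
Your approach is essentially the paper's: the paper packages the discretization-plus-truncation split into a single estimate (Theorem~\ref{thm:SE-Sinc-overall}) for a function class $\LC_{L,R,\alpha,\beta,\gamma}^{\textSEg}(\domD_d)$ and then simply verifies that $F=f\circ\SEt{i}$ lies in this class with the appropriate $L,R,\gamma$, which amounts to exactly the pointwise bounds you describe (for $i=2$ the verification is immediate with $L=R=K$, $\gamma=2$). For $i=3$ the paper supplies the pointwise bounds you anticipate as the ``main obstacle'' by quoting two inequalities from earlier work (Lemmas~\ref{lem:asinh-SE} and~\ref{lem:exp-asinh-SE}), namely $\bigl|\arcsinh(\rme^{\zeta})/(1+\arcsinh(\rme^{\zeta}))\bigr|\le\sqrt{2}\,\bigl|\rme^{\zeta}/(1+\rme^{\zeta})\bigr|$ and $\bigl|\rme^{\zeta}+\sqrt{1+\rme^{2\zeta}}\bigr|^{-1}\le\sqrt{2}\,\bigl|1+\rme^{\zeta}\bigr|^{-1}$ on $\overline{\domD_{\pi/2}}$, which produce precisely the factors $2^{(\alpha+\beta)/2}$ and $\{\cos(d/2)\}^{\alpha+\beta}$.
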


\subsection{Existing error analyses and new error estimates for the DE-Sinc approximation}
\label{subsec:main-DE-Sinc}

Existing error analyses for the Sinc approximation
with $\DEt{1}$, $\DEt{2}$, $\DEt{3}$, and $\DEt{3\dagger}$
are written in the following form
(Theorems~\ref{thm:DE1-Sinc} and~\ref{Thm:DEt3-Sinc}).

\begin{thm}[Tanaka et al.~{\cite[Theorems 3.2, 3.3, 3.5]{tanaka09:_desinc}}]
\label{thm:DE1-Sinc}
The following is true for $i=1$, $2$, $3$.
Assume that $f$ is analytic in $\DEt{i}(\domD_d)$, and
that there exist positive constants $K$ and $\mu$
such that
$
|f(z)|\leq K |E_i(z;\mu)|
$
for all $z\in\DEt{i}(\domD_d)$.
Then, there exists a constant $C_i$, independent of $n$, such that
\begin{equation*}
\sup_{t\in I_i}
\left|
f(t)
- \sum_{k=-n}^n f(\DEt{i}(kh))S(k,h)(\DEtInv{i}(t))
\right|
\leq C_i \rme^{-\pi d n/\log(4 d n/\mu)},
\end{equation*}
where
\begin{equation}
h=\frac{\log(4 d n/\mu)}{n}.
\label{eq:Def-DE-h}
\end{equation}
\end{thm}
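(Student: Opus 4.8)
The plan is to use the standard two-stage error decomposition for Sinc approximations. Writing $F=f\circ\DEt{i}$, which by hypothesis is analytic on $\domD_d$, and setting $x=\DEtInv{i}(t)$ so that $t$ ranges over $I_i$ as $x$ ranges over $\mathbb{R}$, I would split
\[
f(t)-\sum_{k=-n}^{n}f(\DEt{i}(kh))S(k,h)(\DEtInv{i}(t))=\Bigl[F(x)-\sum_{k=-\infty}^{\infty}F(kh)S(k,h)(x)\Bigr]+\sum_{|k|>n}F(kh)S(k,h)(x),
\]
and bound the discretization error (the bracketed term) and the truncation error (the tail sum) separately and uniformly in $x\in\mathbb{R}$.

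For the discretization error I would invoke the classical estimate for the Sinc approximation of a function $G$ in the Hardy space $\Hone(\domD_d)$: its uniform error on $\mathbb{R}$ is at most $N_1(G,d)\,\rme^{-\pi d/h}/\{2\pi d\,(1-\rme^{-2\pi d/h})\}$, where $N_1(G,d)$ denotes the $\Hone$-norm. The real work is to verify $F\in\Hone(\domD_d)$ and to bound $N_1(F,d)$ independently of $n$. First I would substitute $z=\DEt{i}(\zeta)$ into the hypothesis $|f(z)|\le K|E_i(z;\mu)|$ and estimate $|F(\zeta)|$ on the boundary lines $\Im\zeta=\pm d$; the double-exponential growth of $\DEt{i}$ away from the real axis, together with the algebraic ($i=1,2$) or exponential ($i=3$) decay built into $E_i$, forces the boundary integral to converge and yields a finite norm bounded in terms of $K$, $\mu$, and $d$ alone. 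Since $N_1(F,d)$ does not depend on $h$, and since $h=\log(4dn/\mu)/n$ gives $\rme^{-\pi d/h}=\rme^{-\pi dn/\log(4dn/\mu)}$, this term already carries exactly the claimed rate.

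For the truncation error I would use $|S(k,h)(x)|\le 1$ to reduce the tail to $\sum_{|k|>n}|F(kh)|\le K\sum_{|k|>n}|E_i(\DEt{i}(kh);\mu)|$. Here the double-exponential decay of $|E_i(\DEt{i}(kh);\mu)|$ as $|k|\to\infty$ is decisive: the terms fall off like $\rme^{-c\mu\rme^{|k|h}}$ for a transformation-dependent constant $c>0$, so the tail is dominated by its first terms at $|k|\approx n$. Substituting $nh=\log(4dn/\mu)$ gives $\rme^{nh}=4dn/\mu$, whence the leading tail term is of order $\rme^{-4cdn}$, with exponent linear in $n$. Because the target exponent $\pi dn/\log(4dn/\mu)$ is sublinear, the truncation error is, for $n$ past a threshold, no larger than the discretization error; a geometric or integral majorant controls the remaining terms, and adding the two estimates produces a constant $C_i$ independent of $n$.

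The hard part will be the boundary analysis in the discretization step. Establishing $F\in\Hone(\domD_d)$ and extracting a clean, $n$-independent bound on $N_1(F,d)$ requires tracking how each conformal map $\DEt{i}$ distorts the strip near $\Im\zeta=\pm d$---using $d<\pi/2$ to keep the image inside the region where the decay hypothesis is valid---and confirming that $E_i\circ\DEt{i}$ stays integrable along those lines. Since the geometry of the three maps differs, this is where the argument must be carried out case by case, whereas the truncation tail is essentially uniform and comparatively routine.
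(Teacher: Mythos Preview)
Your approach is correct and essentially matches the methodology the paper uses, with one caveat: this theorem is \emph{cited} from Tanaka et al., so the paper does not give its own proof of it. What the paper does prove are the sharpened explicit-constant versions (Theorems~\ref{thm:DE1-Sinc-explicit}--\ref{thm:DE3-Sinc-explicit}), and the underlying strategy there is exactly your discretization/truncation split. For $i=1,2$ the paper packages the split into the function space $\LC_{L,R,\alpha,\beta}^{\textDEg}(\domD_d)$ and appeals to Theorem~\ref{thm:DE-Sinc-overall} (whose proof, by reference to the finite-interval case, is precisely the $\Hone$-discretization bound plus a tail estimate); for case~3 the paper works directly with $\Hone(\domD_d)$ as you propose, albeit for the modified map $\DEt{3\ddagger}$ rather than $\DEt{3}$. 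So the only real difference is organizational: the paper routes cases~1 and~2 through a tailored function space that encodes the required boundary decay, whereas you would verify $F\in\Hone(\domD_d)$ by hand in each case. Both buy the same bound; the packaged route makes the explicit constants cleaner to read off, while yours is more transparent about where each ingredient enters.
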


\begin{rem}
Strictly speaking, in case 3, the range of $d$ is not $0<d<\pi/2$
(assumed at the beginning of this section), but
$0<d<\Im \zeta_0\simeq 1.4045$, where
$\zeta_0=\arcsinh(2\imnum + \frac{2}{\pi}\log(1-\frac{1}{\rme}))$.
This is because the denominator of $E_3(z;\alpha,\beta)$ is zero
at $z = \DEt{3}(\zeta_0)$.
To address this issue,
this paper employs the function $z^{\mu}\rme^{-\mu z}$
in Theorem~\ref{thm:DE3-Sinc-explicit}
instead of $E_3(z;\mu)$.
\end{rem}
\begin{thm}[Tanaka et al.~{\cite[Theorem 3.4]{tanaka09:_desinc}}]
\label{Thm:DEt3-Sinc}
Assume that $f$ is analytic in $\DEt{3\dagger}(\domD_d)$, and
that there exist positive constants $K$ and $\mu$
such that
$|f(z)|\leq K | E_3(z;\mu)|$
for all $z\in\DEt{3\dagger}(\domD_d)$.
Let $h$ be defined as $h=\log(\pi d n/\mu)/n$.
Then, there exists a constant $C$, independent of $n$,
such that~\eqref{leq:DE3-Sinc-error} holds.
\end{thm}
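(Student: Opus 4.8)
The plan is to reduce everything to the real-variable composite $g=f\circ\DEt{3\dagger}$, split the Sinc error into a discretization term and a truncation term, and bound each by combining a classical Sinc-interpolation estimate with the double-exponential decay of $g$ that the hypothesis on $f$ forces. First I would set $x=\DEtInv{3\dagger}(t)$ and $g(\zeta)=f(\DEt{3\dagger}(\zeta))$, which is analytic on $\domD_d$ by the analyticity assumption on $f$. Since the supremum over $t\in(0,\infty)$ corresponds to the supremum over $x\in\mathbb{R}$, the quantity to be bounded equals
\[
 g(x)-\sum_{k=-n}^{n} g(kh)S(k,h)(x),
\]
which I would write as the \emph{discretization error}
\[
 g(x)-\sum_{k=-\infty}^{\infty}g(kh)S(k,h)(x)
\]
plus the \emph{truncation error} $\sum_{|k|>n} g(kh)S(k,h)(x)$.

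For the truncation error, using $|S(k,h)(x)|\le 1$ reduces matters to estimating $\sum_{|k|>n}|g(kh)|$. For real $z>0$ one has both $|E_3(z;\mu)|\le\rme^{-\mu z}$ and $|E_3(z;\mu)|\le z^{\mu}$. On the positive axis $\DEt{3\dagger}(kh)=\rme^{kh-\exp(-kh)}\ge\rme^{kh-1}$, so the first bound gives $|g(kh)|\le K\rme^{-\mu\exp(kh-1)}$; on the negative axis $\DEt{3\dagger}(kh)\le\rme^{-\exp(|k|h)}$, so the second gives $|g(kh)|\le K\rme^{-\mu\exp(|k|h)}$. Both tails decay double-exponentially in $|k|$, so the tail sum is dominated by its boundary term at $|k|=n+1$; once $h$ is fixed below this will be of genuinely exponential order $\Order(\rme^{-cn})$ in $n$ for some $c>0$.

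The main work is the discretization error. Here I would invoke the classical estimate that, for $g$ analytic and suitably integrable on $\domD_d$,
\[
 \sup_{x\in\mathbb{R}}\left|g(x)-\sum_{k=-\infty}^{\infty}g(kh)S(k,h)(x)\right|\le C'N_1(g,d)\,\rme^{-\pi d/h},
\]
where $N_1(g,d)=\int_{\partial\domD_d}|g(\zeta)|\,|\divv\zeta|$ and $C'$ depends only on $d$. I expect the main obstacle to be verifying that $g\in\Hone(\domD_d)$ with $N_1(g,d)$ finite and bounded independently of $n$. This requires tracking $|f(\DEt{3\dagger}(\zeta))|$ along the boundary lines $\Im\zeta=\pm d$: writing $\zeta=x\pm\imnum d$, one checks that $\Re\DEt{3\dagger}(\zeta)\to+\infty$ as $x\to+\infty$ (so $|E_3|$ decays through its $\rme^{-\mu z}$ factor) while $|\DEt{3\dagger}(\zeta)|\to 0$ as $x\to-\infty$ (so $|E_3|$ decays through its $z^{\mu}$ factor), using $d<\pi/2$ to keep $\cos d>0$ and the image off the singularity of $E_3$. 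The resulting double-exponential boundary decay makes the defining integral converge, and a standard limiting argument $d'\uparrow d$ passes from the open-strip hypothesis to the boundary.

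Finally, with $h=\log(\pi d n/\mu)/n$ one has $\pi d/h=\pi d n/\log(\pi d n/\mu)$, so the discretization bound is exactly $\Order(\rme^{-\pi d n/\log(\pi d n/\mu)})$, while $\rme^{nh}=\pi d n/\mu$ makes the truncation term $\Order(\rme^{-cn})$ with $c>0$, which is of strictly smaller order for large $n$ because $\log(\pi d n/\mu)\to\infty$. Adding the two contributions yields~\eqref{leq:DE3-Sinc-error} with a constant $C$ independent of $n$.
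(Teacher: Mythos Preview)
Your approach is correct and is essentially the standard one. Note, however, that the paper does not actually give its own proof of this particular theorem: Theorem~\ref{Thm:DEt3-Sinc} is quoted from Tanaka et al.\ as an existing result, and the paper's own contribution in case~3 is Theorem~\ref{thm:DE3-Sinc-explicit} for the modified transformation $\DEt{3\ddagger}$. That said, your argument mirrors exactly the route the paper takes for $\DEt{3\ddagger}$: split into discretization and truncation errors, control the discretization error via membership in $\Hone(\domD_d)$ and Theorem~\ref{Thm:Sinc-Infinite-Sum-Approx}, control the truncation error via $|S(k,h)(x)|\le 1$ together with the double-exponential decay of $g(kh)$, and then insert the formula for $h$ to balance the two.

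Two small points of imprecision. First, when you say ``the tail sum is dominated by its boundary term at $|k|=n+1$'', bear in mind that $h\to 0$ as $n\to\infty$, so consecutive terms are not uniformly separated; the clean way (as the paper does in Lemma~\ref{lem:DE3-Sinc-truncate}) is to compare the sum to $\frac{1}{h}\int_{nh}^\infty(\cdots)\diff x$, which introduces a harmless factor $1/h=n/\log(\pi d n/\mu)$ that is still absorbed by the exponential $\rme^{-\pi d n}$. Second, the norm $\mathcal{N}_1(g,d)$ is defined in the paper as a limit over shrinking rectangles inside $\domD_d$, so no separate ``$d'\uparrow d$'' argument is needed to pass to the boundary; one works directly with the interior estimates and takes the limit. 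Neither point affects the validity of your outline.
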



As for case 1 (Theorem~\ref{thm:DE1-Sinc} with $i=1$)
and case 2 (Theorem~\ref{thm:DE1-Sinc} with $i=2$),
this paper not only explicitly estimates the constants $C_i$,
but also generalizes the approximation formula
from $\sum_{k=-n}^n$ to $\sum_{k=-M}^N$ as stated below.

\begin{thm}
\label{thm:DE1-Sinc-explicit}
Assume that $f$ is analytic in $\DEt{1}(\domD_d)$, and
that there exist positive constants $K$, $\alpha$, and $\beta$
such that~\eqref{leq:Sinc-case1-alpha} holds
for all $z\in\DEt{1}(\domD_d^{-})$,
and~\eqref{leq:Sinc-case1-beta} holds
for all $z\in\DEt{1}(\domD_d^{+})$.
Let $\mu=\min\{\alpha,\,\beta\}$,
let $\nu=\max\{\alpha,\,\beta\}$,
let $h$ be defined as in~\eqref{eq:Def-DE-h},
and let $M$ and $N$ be defined as
\begin{equation}
\begin{cases}
M=n,\quad N=n - \lfloor\log(\beta/\alpha)/h\rfloor
 & \,\,\,(\text{if}\,\,\,\mu = \alpha),\\
N=n,\quad M=n-\lfloor\log(\alpha/\beta)/h\rfloor
 &  \,\,\,(\text{if}\,\,\,\mu = \beta).
\end{cases}
\label{eq:Def-DE-Sinc-MN}
\end{equation}
Furthermore,
let $n$ be taken sufficiently large so that
$n\geq (\nu \rme)/(4 d)$ holds.
Then, it holds that
\begin{equation*}
\sup_{t\in I_1}
\left|
f(t)
- \sum_{k=-M}^N f(\DEt{1}(kh))S(k,h)(\DEtInv{1}(t))
\right|
\leq C_1\rme^{-\pi d n/\log(4 d n/\mu)},
\end{equation*}
where $C_1$ is a constant independent of $n$, expressed as
\[
 C_1=\frac{2^{\nu+1}K}{\pi d\mu}
\left\{
 \frac{4}{\pi(1-\rme^{-\pi\mu\rme/2})\{\cos(\frac{\pi}{2}\sin d)\}^{\nu}\cos d}
 + \mu \rme^{\pi\nu/4}
\right\}.
\]
\end{thm}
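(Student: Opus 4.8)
The plan is to carry out the classical two-term decomposition of the Sinc error, but with every constant tracked explicitly rather than absorbed into an $\Order(\cdot)$ symbol. Setting $F(x)=f(\DEt{1}(x))$ and $x=\DEtInv{1}(t)$, I would split the error into a \emph{discretization} part $F(x)-\sum_{k=-\infty}^{\infty}F(kh)S(k,h)(x)$ and a \emph{truncation} part $\sum_{k<-M}F(kh)S(k,h)(x)+\sum_{k>N}F(kh)S(k,h)(x)$, bound each separately, and only at the end insert the choices of $h$, $M$, and $N$ from \eqref{eq:Def-DE-h} and \eqref{eq:Def-DE-Sinc-MN} so that the two parts are balanced against the common rate $\rme^{-\pi dn/\log(4dn/\mu)}=\rme^{-\pi d/h}$.

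For the discretization part I would invoke the standard Sinc bound on the Hardy space $\Hone(\domD_d)$, which is of the form
\[
 \sup_{x\in\Rset}\Bigl|F(x)-\sum_{k=-\infty}^{\infty}F(kh)S(k,h)(x)\Bigr|
 \leq \frac{\rme^{-\pi d/h}}{2\pi d\,(1-\rme^{-2\pi d/h})}
 \int_{-\infty}^{\infty}\bigl\{|F(\xi+\imnum d)|+|F(\xi-\imnum d)|\bigr\}\diff\xi .
\]
Two quantities must then be made explicit. First, the factor $1/(1-\rme^{-2\pi d/h})$ is bounded independently of $n$ by means of the hypothesis $n\geq\nu\rme/(4d)$: this forces $2\pi d/h=2\pi dn/\log(4dn/\mu)\geq\pi\mu\rme/2$ --- via the elementary inequality $r\geq1+\log r$ applied with $r=\nu/\mu$ --- so that $1-\rme^{-2\pi d/h}\geq1-\rme^{-\pi\mu\rme/2}$, exactly the corresponding factor appearing in $C_1$. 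Second, substituting \eqref{leq:Sinc-case1-alpha}--\eqref{leq:Sinc-case1-beta} reduces the integrand to $K|E_1(\DEt{1}(\xi\pm\imnum d);\gamma)|=K|1+\DEt{1}(\xi\pm\imnum d)^2|^{-\gamma/2}$, so the crux is a uniform lower bound on $|1+\DEt{1}(\xi\pm\imnum d)^2|$. Unfolding $\DEt{1}(\zeta)=\sinh(\tfrac{\pi}{2}\sinh\zeta)$ shows the minimum over $\xi$ occurs at $\xi=0$, where $\DEt{1}(\pm\imnum d)=\imnum\sin(\tfrac{\pi}{2}\sin d)$ and hence $1+\DEt{1}(\pm\imnum d)^2=\cos^2(\tfrac{\pi}{2}\sin d)$; this yields the $\{\cos(\tfrac{\pi}{2}\sin d)\}^{\nu}$ factor, the peak being governed by $\nu=\max\{\alpha,\beta\}$. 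For large $|\xi|$ the real part of the inner map behaves like $\tfrac{\pi}{2}\sinh\xi\cos d$, so the integrand decays double-exponentially at a rate proportional to $\mu\cos d$; this makes the integral finite and produces the $1/(\mu\cos d)$ and $1/\pi$ factors, while $|\sinh w|\sim\tfrac12\rme^{|\Re w|}$ accounts for the powers of $2$.

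For the truncation part I would use $|S(k,h)(x)|\leq1$ together with $|F(kh)|\leq K|E_1(\DEt{1}(kh);\beta)|$ for $k>N$ and $|F(kh)|\leq K|E_1(\DEt{1}(kh);\alpha)|$ for $k<-M$. Since $\DEt{1}(kh)=\sinh(\tfrac{\pi}{2}\sinh(kh))$ grows double-exponentially in $k$, each tail is dominated by a rapidly converging series whose explicit sum contributes the $\mu\,\rme^{\pi\nu/4}$ term of $C_1$. The asymmetric choice of $M$ and $N$ in \eqref{eq:Def-DE-Sinc-MN}, with the floor term $\lfloor\log(\beta/\alpha)/h\rfloor$, is tuned precisely so that the $\alpha$-governed left tail and the $\beta$-governed right tail both fall at or below the discretization rate $\rme^{-\pi d/h}$. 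Collecting the discretization and truncation contributions and simplifying then yields the stated bound with the displayed constant $C_1$.

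The hardest step will be the boundary-integral estimate. Because $\DEt{1}$ is a nested hyperbolic sine, $|\DEt{1}(\xi\pm\imnum d)|$ is only moderately small near $\xi=0$ yet double-exponentially large as $|\xi|\to\infty$, so producing a single clean lower bound for $|1+\DEt{1}(\xi\pm\imnum d)^2|$ that is simultaneously sharp at $\xi=0$ (to capture $\cos(\tfrac{\pi}{2}\sin d)$) and strong enough at infinity to control the full integral with a \emph{computable} constant --- rather than the asymptotic estimate used in the existing analysis --- is the delicate point. The secondary difficulty is the bookkeeping that makes the two asymmetric truncation tails provably, not merely asymptotically, match the discretization term under the single hypothesis $n\geq\nu\rme/(4d)$.
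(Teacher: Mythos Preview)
Your outline matches the paper's strategy: set $F=f\circ\DEt{1}$, split into discretization plus truncation, control the former through the $\Hone(\domD_d)$ bound, and balance via the choices of $h,M,N$. The paper organises this more modularly, introducing a function space $\LC_{L,R,\alpha,\beta}^{\textDEg}(\domD_d)$ of functions satisfying $|F(\zeta)|\leq L\,|1+\rme^{-\pi\sinh\zeta}|^{-\alpha/2}|1+\rme^{\pi\sinh\zeta}|^{-\beta/2}$ and quoting a ready-made error estimate for that space (Theorem~\ref{thm:DE-Sinc-overall}, whose proof is deferred to the finite-interval analogue in~\cite{okayama09:_error}); the whole argument then reduces to verifying $F\in\LC_{L,R,\alpha,\beta}^{\textDEg}(\domD_d)$ with explicit $L,R$.

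What you flag as the hardest step---a bound on $|1+\DEt{1}(\xi\pm\imnum d)^2|$ that is sharp at $\xi=0$ yet strong enough to make the boundary integral computable---is resolved in the paper not by analysing that quantity directly but through the factorisation
\[
1+\DEt{1}(\zeta)^2=\cosh^2\!\bigl(\tfrac{\pi}{2}\sinh\zeta\bigr)=\tfrac14\,(1+\rme^{-\pi\sinh\zeta})(1+\rme^{\pi\sinh\zeta}),
\]
combined with the cited pointwise estimate (Lemma~\ref{lem:DEfunc-estim})
\[
|1+\rme^{\pm\pi\sinh(x+\imnum y)}|^{-1}\leq\bigl\{(1+\rme^{\pm\pi\sinh(x)\cos y})\cos(\tfrac{\pi}{2}\sin y)\bigr\}^{-1}.
\]
This single product bound cleanly separates the constant $\cos(\tfrac{\pi}{2}\sin d)$ from an $x$-dependent factor that still decays double-exponentially, so the boundary integral evaluates in closed form and produces the $1/(\mu\cos d)$ and powers of $2$ you anticipate. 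The asymmetry $\alpha\neq\beta$ is handled by a further cited lemma that merges the two half-strip hypotheses into one global bound with exponents $\alpha/2,\beta/2$ on the two factors; this is precisely what promotes the power of $\cos(\tfrac{\pi}{2}\sin d)$ from $(\alpha+\beta)/2$ to $\nu$ in the final constant. Your plan is heading to the same place, but without the factorisation and Lemma~\ref{lem:DEfunc-estim} it is hard to land on the \emph{exact} constant $C_1$ rather than merely one of the right shape.
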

\begin{thm}
\label{thm:DE2-Sinc-explicit}
Assume that $f$ is analytic in $\DEt{2}(\domD_d)$, and
that there exist positive constants $K$, $\alpha$, and $\beta$
such that~\eqref{leq:Sinc-case2-alpha-beta} holds with $i=2$
for all $z\in\DEt{2}(\domD_d)$.
Let $\mu=\min\{\alpha,\,\beta\}$,
let $\nu=\max\{\alpha,\,\beta\}$,
let $h$ be defined as in~\eqref{eq:Def-DE-h},
and let $M$ and $N$ be defined as in~\eqref{eq:Def-DE-Sinc-MN}.
Furthermore,
let $n$ be taken sufficiently large so that
$n\geq (\nu \rme)/(4 d)$ holds.
Then, it holds that
\begin{equation*}
\sup_{t\in I_2}
\left|
f(t)
- \sum_{k=-M}^N f(\DEt{2}(kh))S(k,h)(\DEtInv{2}(t))
\right|
\leq C_2\rme^{-\pi d n/\log(4 d n/\mu)},
\end{equation*}
where $C_2$ is a constant independent of $n$, expressed as
\[
 C_2=\frac{2K}{\pi d \mu}
\left\{
 \frac{4}{\pi(1-\rme^{-\pi\mu\rme/2})\{\cos(\frac{\pi}{2}\sin d)\}^{(\alpha+\beta)/2}\cos d}
 + \mu\rme^{\pi\nu/4}
\right\}.
\]
\end{thm}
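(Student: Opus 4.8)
The plan is to run the standard two–stage Sinc error analysis, exactly as for the companion case~1 result in Theorem~\ref{thm:DE1-Sinc-explicit}. Set $F(x)=f(\DEt{2}(x))$, so that $t=\DEt{2}(x)$ sweeps $I_2$ as $x$ runs over $\Rset$ and $S(k,h)(\DEtInv{2}(t))=S(k,h)(x)$. Adding and subtracting the full bi-infinite Sinc series and using the triangle inequality, I would bound the error by a discretization part and a truncation part:
\begin{align*}
&\sup_{t\in I_2}\Bigl|f(t)-\sum_{k=-M}^{N}f(\DEt{2}(kh))S(k,h)(\DEtInv{2}(t))\Bigr|\\
&\quad\le\sup_{x\in\Rset}\Bigl|F(x)-\sum_{k=-\infty}^{\infty}F(kh)S(k,h)(x)\Bigr|
+\sup_{x\in\Rset}\Bigl|\sum_{k<-M}F(kh)S(k,h)(x)+\sum_{k>N}F(kh)S(k,h)(x)\Bigr|.
\end{align*}

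For the discretization part I would invoke the classical bound for the bi-infinite Sinc interpolation of a function in $\Hone(\domD_d)$ --- the same estimate underlying Theorems~\ref{thm:SE2-Sinc-Explicit} and~\ref{thm:DE1-Sinc-explicit} --- namely $\|F\|_{\Hone(\domD_d)}\,\rme^{-\pi d/h}/\{2\pi d(1-\rme^{-2\pi d/h})\}$. With $h$ as in~\eqref{eq:Def-DE-h} one has $\pi d/h=\pi dn/\log(4dn/\mu)$, reproducing the claimed exponential factor. The hypothesis $n\ge(\nu\rme)/(4d)$ forces $4dn/\mu\ge\rme$, whence the elementary inequality $s/\log s\ge\rme$ for $s\ge\rme$ gives $2\pi d/h\ge\pi\mu\rme/2$, so the geometric factor is dominated by the constant $(1-\rme^{-\pi\mu\rme/2})^{-1}$. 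It then remains to estimate $\|F\|_{\Hone(\domD_d)}$, i.e.\ the boundary integral $\int_{-\infty}^{\infty}|f(\DEt{2}(x\pm\imnum d))|\diff x$, which by~\eqref{leq:Sinc-case2-alpha-beta} is at most $K\int_{-\infty}^{\infty}|E_2(\DEt{2}(x\pm\imnum d);\alpha,\beta)|\diff x$; the two boundary lines contribute equally.

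This boundary integral is the heart of the matter and the step I expect to be the main obstacle. Writing $z=\DEt{2}(x+\imnum d)=\rme^{(\pi/2)\sinh(x+\imnum d)}$ and using $1+z^{2}=2z\cosh(\tfrac{\pi}{2}\sinh(x+\imnum d))$, one gets $|E_2(z;\alpha,\beta)|=|z|^{(\alpha-\beta)/2}/\{2|\cosh(\tfrac{\pi}{2}\sinh(x+\imnum d))|\}^{(\alpha+\beta)/2}$ with $|z|=\rme^{(\pi/2)\cos d\,\sinh x}$. I would first establish the pointwise lower bound $|\cosh(\tfrac{\pi}{2}\sinh(x+\imnum d))|\ge\cos(\tfrac{\pi}{2}\sin d)$, valid for every $x\in\Rset$ precisely because $0<d<\pi/2$ makes $\cos(\tfrac{\pi}{2}\sin d)>0$; this yields the factor $\{\cos(\tfrac{\pi}{2}\sin d)\}^{-(\alpha+\beta)/2}$. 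The delicate point is that this bound alone leaves the non-decaying factor $|z|^{(\alpha-\beta)/2}$, so I would simultaneously use $|\cosh(\tfrac{\pi}{2}\sinh(x+\imnum d))|\ge|\sinh(\tfrac{\pi}{2}\cos d\,\sinh x)|$ to recover the genuine decay $|z|^{-\beta}$ as $x\to+\infty$ and $|z|^{\alpha}$ as $x\to-\infty$. Majorising the integrand by a monotone real envelope and using $\sinh x\ge x$ together with $\alpha,\beta\ge\mu$ to evaluate $\int_{0}^{\infty}\rme^{-(\pi/2)\mu\cos d\,x}\diff x=2/(\pi\mu\cos d)$ then supplies the lone $\cos d$ in the denominator and assembles the first bracketed term of $C_2$.

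For the truncation part I would use $|S(k,h)(x)|\le1$ on $\Rset$ to reduce it to $\sum_{k\ge N+1}|F(kh)|+\sum_{k\le-M-1}|F(kh)|$. Inserting $|F(kh)|\le K|E_2(\DEt{2}(kh);\alpha,\beta)|$ and the inequality $\sinh(|k|h)\ge\tfrac12(\rme^{|k|h}-1)$ turns the right-hand summands into a constant multiple of $\rme^{(\pi/4)\nu}\rme^{-(\pi/4)\beta\rme^{kh}}$ (and symmetrically with $\alpha$ on the left), the prefactor $\rme^{(\pi/4)\nu}$ being the source of the $\rme^{\pi\nu/4}$ in $C_2$. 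The choices of $M$ and $N$ in~\eqref{eq:Def-DE-Sinc-MN} are arranged so that the first retained term on each side carries the common rate $\rme^{-\pi d/h}$, and summing the resulting rapidly convergent series bounds both tails by a constant times $\rme^{-\pi dn/\log(4dn/\mu)}$. Adding the two parts and factoring out $\{2K/(\pi d\mu)\}\rme^{-\pi dn/\log(4dn/\mu)}$ yields the stated $C_2$, in the same pattern as the proof of Theorem~\ref{thm:DE1-Sinc-explicit}.
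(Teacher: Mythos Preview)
Your plan is correct and would reproduce the stated constant, but it does substantially more work than the paper. The paper's argument is a two-line application of the packaged Theorem~\ref{thm:DE-Sinc-overall}: one simply observes that
\[
 E_2\bigl(\DEt{2}(\zeta);\alpha,\beta\bigr)
 =\frac{\rme^{(\pi/2)\alpha\sinh\zeta}}{(1+\rme^{\pi\sinh\zeta})^{(\alpha+\beta)/2}}
 =\frac{1}{(1+\rme^{-\pi\sinh\zeta})^{\alpha/2}(1+\rme^{\pi\sinh\zeta})^{\beta/2}},
\]
which is \emph{identically} the weight in the definition of $\LC^{\textDEg}_{L,R,\alpha,\beta}(\domD_d)$; hence~\eqref{leq:Sinc-case2-alpha-beta} gives $F=f\circ\DEt{2}\in\LC^{\textDEg}_{K,K,\alpha,\beta}(\domD_d)$ with $L=R=K$ at once, and Theorem~\ref{thm:DE-Sinc-overall} delivers $C_2$ verbatim. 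The discretization/truncation split, the $\Hone(\domD_d)$ boundary integral, the bounds via Lemma~\ref{lem:DEfunc-estim}, and the tail sums you outline are exactly the ingredients hidden inside the proof of Theorem~\ref{thm:DE-Sinc-overall} (which the paper omits, pointing to the finite-interval analogue in~\cite{okayama09:_error}); so your route reproves that general theorem in this special case rather than invoking it. What the paper's organization buys is that all of cases~1 and~2 reduce to checking membership in $\LC^{\textDEg}$, and case~2 is trivial because of the algebraic identity above---the boundary-integral estimation you flag as ``the heart of the matter'' never needs to be done here.
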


As for case 3 (Theorem~\ref{thm:DE1-Sinc} with $i=3$
and Theorem~\ref{Thm:DEt3-Sinc}),
this paper employs the improved variable transformation
$\DEt{3\ddagger}$ as described in the introduction, and
gives the error estimates
in a similar form to Theorems~\ref{thm:DE1-Sinc-explicit}
and~\ref{thm:DE2-Sinc-explicit}.
\begin{thm}
\label{thm:DE3-Sinc-explicit}
Assume that $f$ is analytic in $\DEt{3\ddagger}(\domD_d)$, and
that there exist positive constants $K$ and $\mu$ ($\mu\leq 1$)
such that $|f(z)|\leq K |z^\mu \rme^{-\mu z}|$ holds
for all $z\in\DEt{3\ddagger}(\domD_d)$.
Let $h$ be defined as
\begin{equation}
h=\frac{\log(2 d n/\mu)}{n}.
\label{eq:Def-DE-h-half}
\end{equation}
Furthermore,
let $n$ be taken sufficiently large so that
$n\geq (\mu \rme)/(2 d)$ holds.
Then, inequality~\eqref{leq:DE3ddagger-Sinc-error} holds with
\[
 C=\frac{K}{\pi^{1-\mu}d \mu}
\left\{
 \frac{4}{\pi(1-\rme^{-\pi\mu\rme})\{\cos(\frac{\pi}{2}\sin d)\}^{2\mu}\{\cos d\}^{\mu+1}}
 + 
\mu 2^{1-\mu}\rme^{\mu(\pi+2)/2}
\right\}.
\]
\end{thm}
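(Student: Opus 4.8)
The plan is to adapt the two-step scheme of Theorems~\ref{thm:DE1-Sinc-explicit} and~\ref{thm:DE2-Sinc-explicit} to the map $\DEt{3\ddagger}$. Put $g=f\circ\DEt{3\ddagger}$; since $f$ is analytic on $\DEt{3\ddagger}(\domD_d)$, the function $g$ is analytic on $\domD_d$. Applying the classical Sinc-interpolation error bound for functions in $\Hone(\domD_d)$ (Stenger~\cite{stenger93:_numer}) and then truncating the interpolation to $|k|\leq n$, the total error splits into a discretization term and a truncation term. The discretization term is bounded by a fixed multiple of $\rme^{-\pi d/h}\,N_1(g)/(1-\rme^{-2\pi d/h})$, where
\[
N_1(g)=\lim_{\eta\to d^{-}}\int_{-\infty}^{\infty}\bigl(|g(\xi+\imnum\eta)|+|g(\xi-\imnum\eta)|\bigr)\diff\xi
\]
is the $\Hone(\domD_d)$-norm of $g$, while the truncation term is bounded, using $|S(k,h)|\leq1$, by the tails $\sum_{k=n+1}^{\infty}|g(kh)|+\sum_{k=-\infty}^{-n-1}|g(kh)|$. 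With $h=\log(2dn/\mu)/n$ we have $\rme^{-\pi d/h}=\rme^{-\pi dn/\log(2dn/\mu)}$, so the discretization term already carries the asserted rate; it remains to make $N_1(g)$ and the tails explicit and to check that the truncation term is of the same order or smaller.

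For the discretization term I estimate $g$ on the boundary lines $\Im\zeta=\pm d$. Writing $z=\DEt{3\ddagger}(\zeta)$ and $w=\rme^{\pi\sinh\zeta}$, the hypothesis $|f(z)|\leq K|z^{\mu}\rme^{-\mu z}|$ together with $|z^{\mu}\rme^{-\mu z}|=|z|^{\mu}\rme^{-\mu\Re z}$ and $\Re z=\log|1+w|$ gives
\[
|g(\zeta)|\leq K\,|\log(1+w)|^{\mu}\,|1+w|^{-\mu}.
\]
The main obstacle is to bound this uniformly in $\xi$ on each boundary line. On $\Im\zeta=d$ one has $w=\rme^{\pi\sinh\xi\cos d}\rme^{\imnum\pi\cosh\xi\sin d}$, so $1+w$ comes closest to the origin near $\xi=0$, where $w=\rme^{\imnum\pi\sin d}$ and hence $|1+w|=2\cos(\tfrac{\pi}{2}\sin d)$; the hypothesis $d<\pi/2$ keeps $\arg w$ strictly inside $(-\pi,\pi)$ and thus $1+w$ away from $0$. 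Controlling $|1+w|^{-\mu}$ from above in this way yields the factor $\{\cos(\tfrac{\pi}{2}\sin d)\}^{2\mu}$, while the decay $|w|=\rme^{\pi\sinh\xi\cos d}$ that makes the boundary integral converge supplies, after integrating in $\xi$, the factor $\{\cos d\}^{\mu+1}$; this is the step where $g\in\Hone(\domD_d)$ is simultaneously established. Finally, with $u=2dn/\mu$ one has $u/\log u\geq\rme$ for all $u>1$, hence $2\pi d/h=\pi\mu\,(u/\log u)\geq\pi\mu\rme$ and $1/(1-\rme^{-2\pi d/h})\leq1/(1-\rme^{-\pi\mu\rme})$, the remaining factor in the first term of $C$; the assumption $n\geq(\mu\rme)/(2d)$ guarantees in addition that $\log(2dn/\mu)\geq1$, used below.

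For the truncation term I use, on the real axis, $|g(kh)|\leq K\,\DEt{3\ddagger}(kh)^{\mu}(1+\rme^{\pi\sinh(kh)})^{-\mu}$. For $k\geq n+1$ the factor $(1+\rme^{\pi\sinh(kh)})^{-\mu}\leq\rme^{-\mu\pi\sinh(kh)}$ decays doubly exponentially; for $k\leq-n-1$ one has $\DEt{3\ddagger}(kh)\leq\rme^{\pi\sinh(kh)}$, so $|g(kh)|\leq K\rme^{\mu\pi\sinh(kh)}$, again doubly-exponentially small since $\sinh(kh)\to-\infty$. Bounding each tail by its leading term times a convergent geometric majorant and evaluating $\sinh(nh)$ through $\rme^{nh}=2dn/\mu$ produces a bound of order $\rme^{-\pi dn}$ with the explicit constant $\mu\,2^{1-\mu}\rme^{\mu(\pi+2)/2}$. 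Because $\log(2dn/\mu)\geq1$, we have $\rme^{-\pi dn}\leq\rme^{-\pi dn/\log(2dn/\mu)}$, so the truncation term is dominated by the discretization rate.

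Combining the two bounds, factoring out the common $\rme^{-\pi dn/\log(2dn/\mu)}$, and collecting the prefactors (the overall $K/(\pi^{1-\mu}d\mu)$ arising from $z\sim\pi\sinh$ near the boundary together with the constants above) yields inequality~\eqref{leq:DE3ddagger-Sinc-error} with exactly the stated $C$. The only genuinely delicate point is the uniform boundary estimate of $|\log(1+w)|^{\mu}|1+w|^{-\mu}$ on $\Im\zeta=\pm d$; everything else is bookkeeping of geometric sums and the choice of $h$.
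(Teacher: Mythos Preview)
Your architecture matches the paper's: split into discretization and truncation, bound the former via the $\Hone(\domD_d)$-norm of $g=f\circ\DEt{3\ddagger}$, and verify the latter is of the same order. The bound $1/(1-\rme^{-2\pi d/h})\leq 1/(1-\rme^{-\pi\mu\rme})$ and the comparison $\rme^{-\pi dn}\leq\rme^{-\pi dn/\log(2dn/\mu)}$ are fine.

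The genuine gap is the step you label ``delicate'' and leave undone, and your allocation of the constants is wrong. The exponent $2\mu$ on $\cos(\tfrac{\pi}{2}\sin d)$ cannot come from $|1+w|^{-\mu}$ alone (that supplies only one power $\mu$), and $\{\cos d\}^{\mu+1}$ does not come purely from the integration. What is missing is an explicit bound on $|\log(1+w)|$ itself along the lines $\Im\zeta=\pm d$. The paper proves (Lemma~\ref{lem:log-DE})
\[
|\log(1+\rme^{\pi\sinh(x+\imnum y)})|
\leq \frac{1}{\cos(\tfrac{\pi}{2}\sin y)\cos y}\cdot\frac{\pi\cosh x}{1+\rme^{-\pi\sinh(x)\cos y}},
\]
by writing the logarithm as an integral along the horizontal line, applying Lemma~\ref{lem:DEfunc-estim} under the integral, and then reducing to the real-variable inequality $\sqrt{t^2+\pi^2}\geq(1+\rme^{-t})\log(1+\rme^{t})$; the verification of the latter (Lemma~\ref{lem:last-inequality}) is the true content of the ``delicate point'' and takes about a page of calculus. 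This lemma is what contributes the second power $\mu$ to $\cos(\tfrac{\pi}{2}\sin d)^{2\mu}$ and the $\mu$ in $\cos^{\mu+1}d$; combining it with Lemma~\ref{lem:DEfunc-estim} for $|1+w|^{-\mu}$ and integrating $\int_{-\infty}^{\infty}\cosh x\,\rme^{-\pi\mu\sinh|x|\cos d}\diff x=2/(\pi\mu\cos d)$ yields exactly $\mathcal{N}_1(g,d)\leq 4K/\{\pi^{1-\mu}\mu\cos^{2\mu}(\tfrac{\pi}{2}\sin d)\cos^{\mu+1}d\}$. The same lemma with $y=0$ also drives the paper's truncation estimate (Lemma~\ref{lem:DE3-Sinc-truncate}), so the two tails are handled symmetrically via one integral bound rather than by the separate ad~hoc estimates you sketch. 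A minor side remark: your claim that $d<\pi/2$ keeps $\arg w$ in $(-\pi,\pi)$ is false as written, since $\arg w=\pi\cosh\xi\sin d$ is unbounded in $\xi$; it holds only at $\xi=0$.
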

\begin{rem}
In Theorem~\ref{thm:DE3-Sinc-explicit},
the sum of the approximation formula is not $\sum_{k=-M}^N$,
but $\sum_{k=-n}^n$.
This is because the inequality bounding the function $f$ is
not $|f(z)|\leq \tilde{K} |z^\alpha \rme^{-\beta z}|$, but
$|f(z)|\leq K |z^\mu \rme^{-\mu z}|$,
which assumes $\alpha=\beta$.
This assumption, however, is made
without loss of generality, because
by setting $z=(\alpha/\beta)w$
and $g(w)=f((\alpha/\beta)w)$ in the former inequality, we obtain
$|g(w)|\leq \tilde{K} (\alpha/\beta)^\alpha |w^\alpha \rme^{-\alpha w}|$.
\end{rem}
\begin{rem}
Theorems~\ref{thm:SE1-Sinc}--\ref{thm:SE2-Sinc-Explicit}
assume that $f$ is analytic in a simply connected domain.
In contrast,
Theorems~\ref{thm:DE1-Sinc}--\ref{thm:DE3-Sinc-explicit}
assume that $f$ is analytic in an infinitely sheeted Riemann surface.
These two assumptions are different;
see Tanaka et al.~\cite[Figures 1--9]{tanaka09:_desinc} for details.
\end{rem}

\section{Numerical Examples} \label{sec:numer-exam}

In this section, some numerical results are presented.
All computation programs were written in C with
double-precision floating-point arithmetic.
The programs and computation results are available at
\texttt{https://github.com/okayamat/sinc-errorbound-infinite}.

First, let us consider the following three examples.

\begin{exam}[Case 1: function decays algebraically as $t\to\pm\infty$]
\label{Exam:case_1}
Consider the function
\[
 f_1(t)=\frac{1}{1+t^2}\sqrt{1+\tanh^2(\arcsinh t)},
\]
which satisfies the assumptions
in Theorem~\ref{thm:SE1-Sinc-Explicit}
with $\alpha=\beta=2$, $d=\pi/4$, and $K=3/2$,
and also
those in Theorem~\ref{thm:DE1-Sinc-explicit}
with $\alpha=\beta=2$, $d=\pi/6$, and $K=3/2$.
\end{exam}
\begin{exam}[Case 2: function decays algebraically as $t\to\infty$]
\label{Exam:case_2}
Consider the function
\[
 f_2(t)=\frac{\sqrt{t}}{1+t^2}\sqrt{1+\tanh^2(\log t)},
\]
which satisfies the assumptions
in Theorem~\ref{thm:SE2-Sinc-Explicit} ($i=2$)
with $\alpha=1/2$, $\beta=3/2$, $d=\pi/4$, and $K=3/2$,
and also
those in Theorem~\ref{thm:DE2-Sinc-explicit}
with $\alpha=1/2$, $\beta=3/2$, $d=\pi/6$, and $K=3/2$.
\end{exam}
\begin{exam}[Case 3: function decays exponentially as $t\to\infty$]
\label{Exam:case_3}
Consider the function
\[
 f_3(t)=t^{\pi/4}\rme^{-t},
\]
which satisfies the assumptions
in Theorem~\ref{thm:SE2-Sinc-Explicit} ($i=3$)
with $\alpha=\pi/4$, $\beta=3/4$, $d=3.14/2$, and $K=(1+(\pi/2)^2)^{\pi/8}$,
and also
those in Theorem~\ref{thm:DE1-Sinc} ($i=3$)
with $\mu=\pi/4$, $d=1.40$, and some constant $K>0$
(note that no explicit error bound is given).
In addition, if we set $g(u)=f_3((\pi/4)u)$,
$g$ satisfies the assumptions in Theorem~\ref{thm:DE3-Sinc-explicit}
with $\mu=\pi/4$, $d=3/2$, and $K=(\pi/4)^{\pi/4}$.
\end{exam}

Numerical results are shown in
Figures~\ref{Fig:example1}--\ref{Fig:example3}.
In Figure~\ref{Fig:example1},
``Maximum Error'' denotes the maximum value
of the absolute error investigated at the following 403 points:
$t=0$, $\pm 2^{-50}$, $\pm 2^{-49.5},\,\ldots$,
$\pm 2^{-0.5}$, $\pm 2^{0}$,
$\pm 2^{0.5},\,\ldots$, $\pm 2^{50}$.
Similarly, Figures~\ref{Fig:example2} and~\ref{Fig:example3}
present the maximum errors at 201 points
(positive points of those stated above).
In each graph,
we can see that the error estimate by the presented theorem (dotted line)
clearly bounds the actual error (solid line).
Furthermore, in Figure~\ref{Fig:example3},
we can confirm that the proposed DE-Sinc approximation
using $\DEt{3\ddagger}$ converges faster than
the old DE-Sinc approximation using $\DEt{3}$ (``old DE'' in the graph).
Another DE transformation $\DEt{3\dagger}$ is not employed, because
$\DEtInv{3\dagger}$ is not easily computed.

Next, let us consider the following example,
which is an unfavorable case for the DE-Sinc approximation.

\begin{exam}[Case 1: function decays algebraically as $t\to\pm\infty$]
\label{Exam:case_1-2}
Consider the function
\[
 f_4(t)=\frac{1}{1+t^2}\sqrt{\cos(3\arcsinh t) + \cosh(\pi)},
\]
which satisfies the assumptions
in Theorem~\ref{thm:SE1-Sinc-Explicit}
with $\alpha=\beta=2$, $d=\pi/3$, and $K=2\cosh(\pi)$,
whereas it does not satisfy the assumptions
in Theorem~\ref{thm:DE1-Sinc-explicit}
as we cannot find any $d>0$ such that
$f_4$ is analytic in $\DEt{1}(\domD_d)$
(we find $\alpha=\beta=2$, though).
\end{exam}

According to Okayama et al.~\cite{okayama13:_de_sinc_same},
even in this case, the DE-Sinc approximation works
almost as well as the SE-Sinc approximation
by choosing $d=\arcsin(d_{\textSE{}}/\pi)$,
where $d_{\textSE{}}$ denotes $d$ in
the SE-Sinc approximation.
Actually,
as shown in Figure~\ref{Fig:example4}, the DE-Sinc approximation
converges at a similar rate to that of the SE-Sinc approximation.
However, note that the error in the DE-Sinc approximation cannot
be estimated by Theorem~\ref{thm:DE1-Sinc-explicit}, whereas
Theorem~\ref{thm:SE1-Sinc-Explicit} works well.

\begin{figure}
\begin{center}
 \begin{minipage}{0.49\linewidth}
  \includegraphics[width=\linewidth]{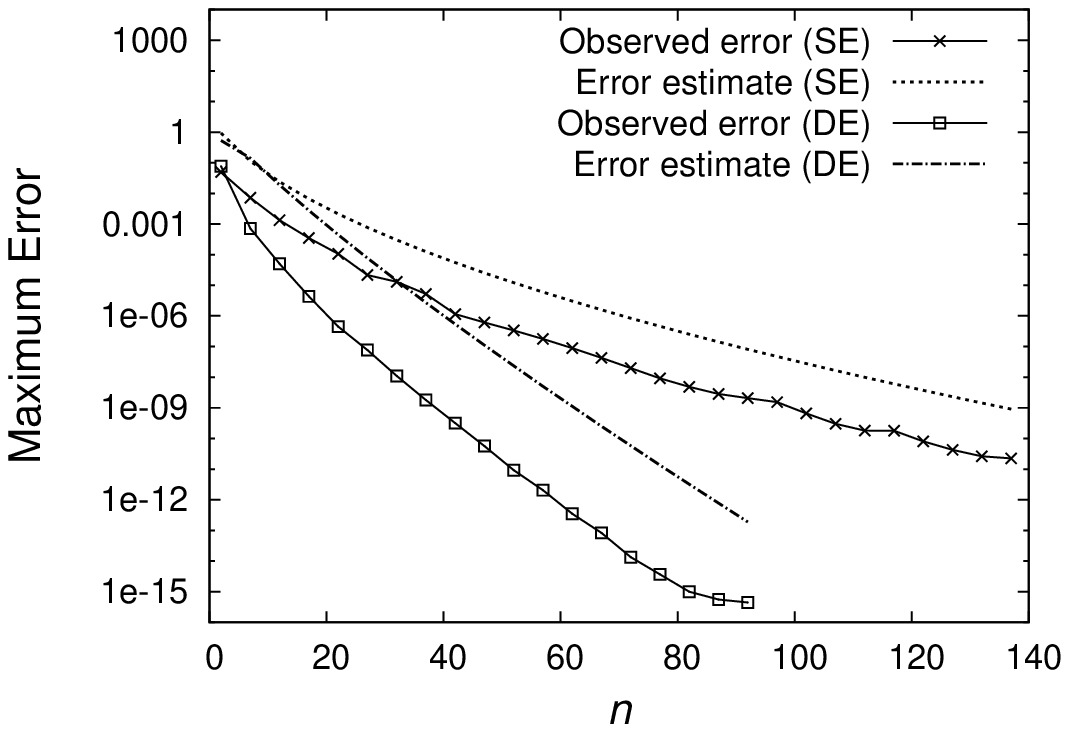}
  \caption{Approximation errors of $f_1$ and their estimates.}
  \label{Fig:example1}
 \end{minipage}
 \begin{minipage}{0.005\linewidth}
 \mbox{ }
 \end{minipage}
 \begin{minipage}{0.49\linewidth}
  \includegraphics[width=\linewidth]{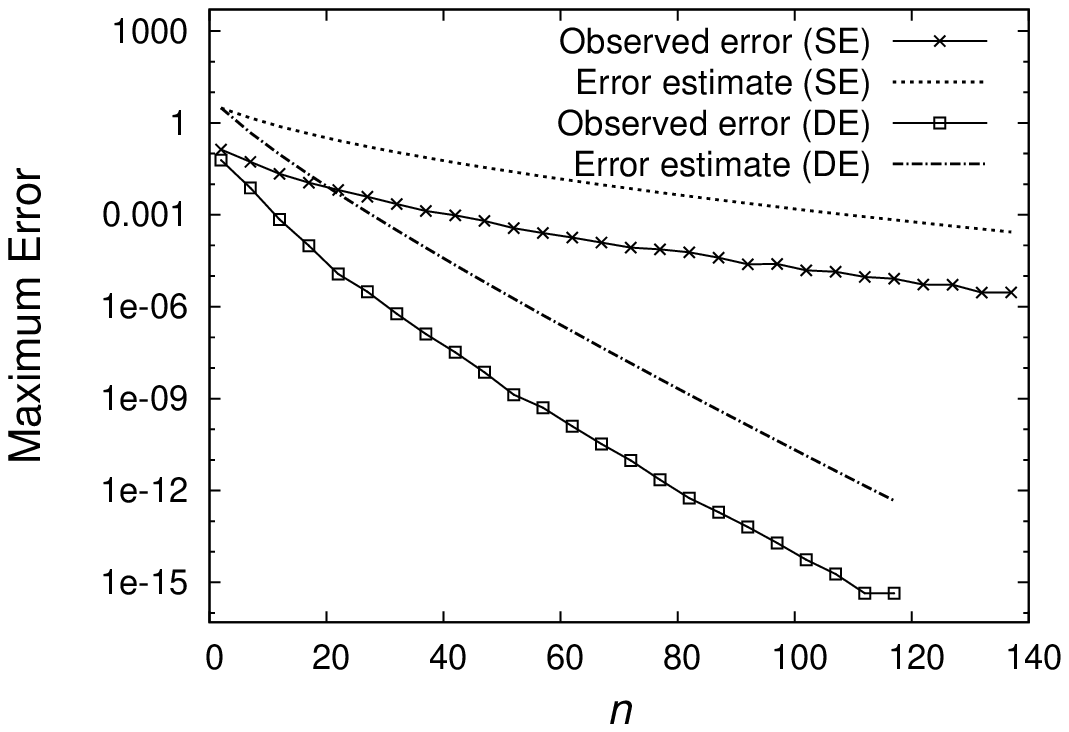}
  \caption{Approximation errors of $f_2$ and their estimates.}
  \label{Fig:example2}
 \end{minipage}

 \begin{minipage}{0.49\linewidth}
  \includegraphics[width=\linewidth]{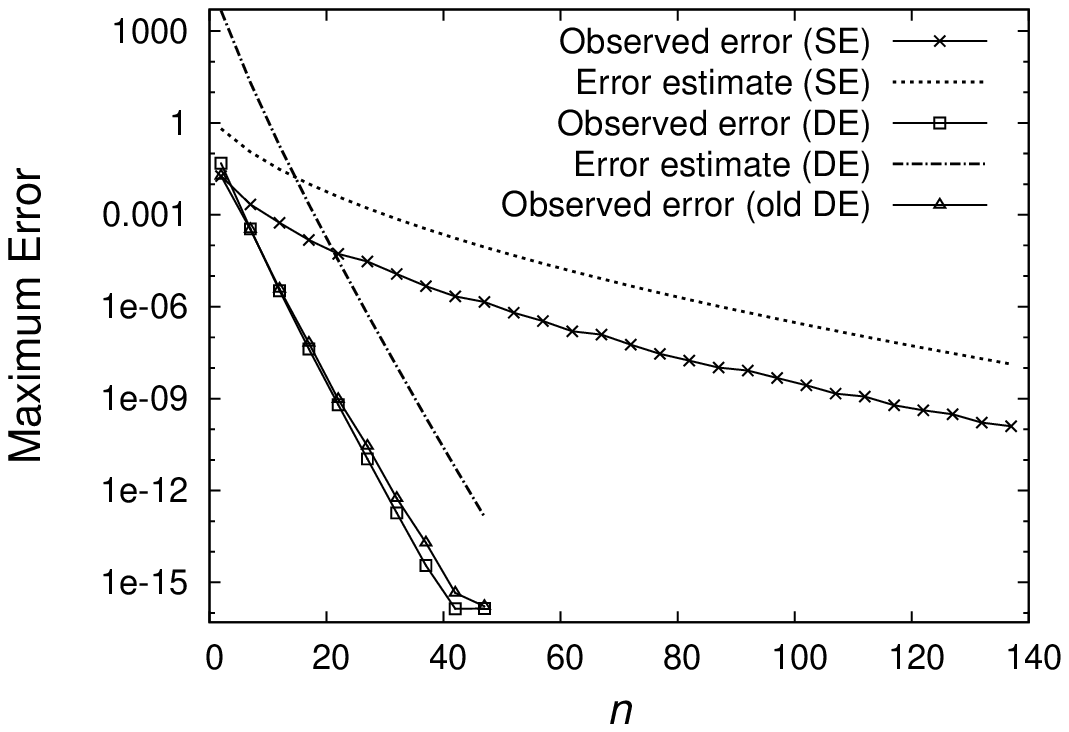}
  \caption{Approximation errors of $f_3$ and their estimates.}
  \label{Fig:example3}
 \end{minipage}
 \begin{minipage}{0.005\linewidth}
 \mbox{ }
 \end{minipage}
 \begin{minipage}{0.49\linewidth}
  \includegraphics[width=\linewidth]{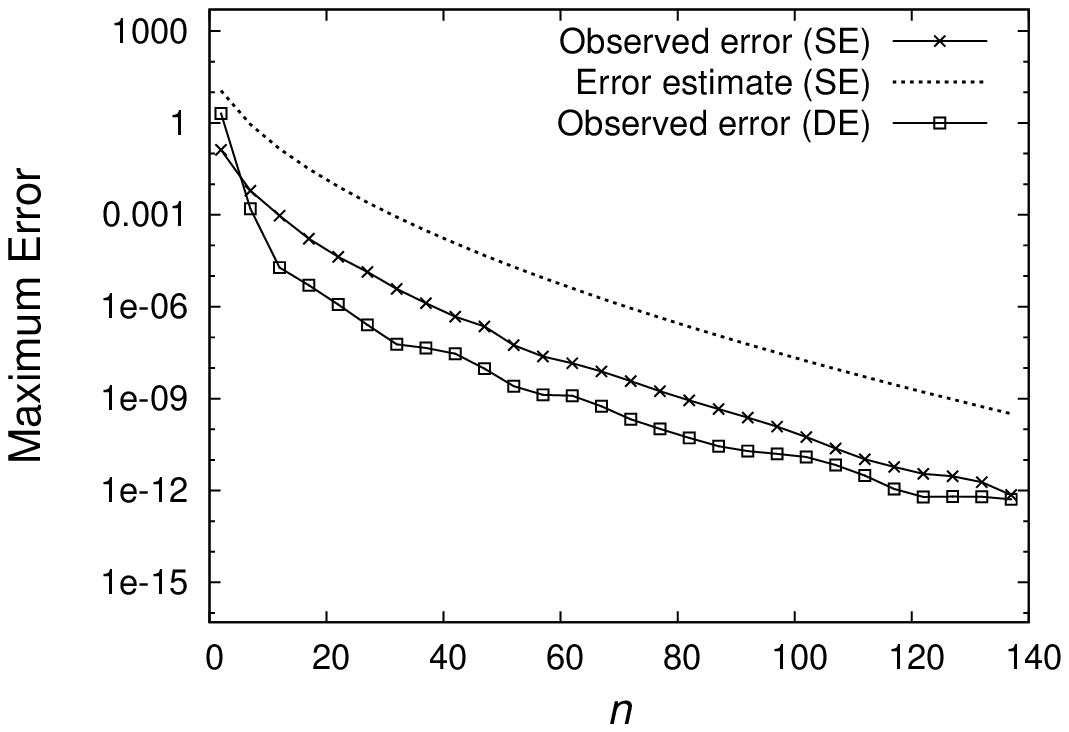}
  \caption{Approximation errors of $f_4$ and their estimates.}
  \label{Fig:example4}
 \end{minipage}
\end{center}
\end{figure}


\section{Proofs}
\label{sec:proofs}

\subsection{In the case of the SE-Sinc approximation}

First, the error of the SE-Sinc approximation~\eqref{eq:SE-Sinc-approx}
is estimated.
Let us look at a sketch of the proof.
Let $F(x)=f(\SEt{i}(x))$
(recall that we employ $t=\SEt{i}(x)$ for the variable transformation),
and evaluate the following term:
\begin{align*}
\left|
f(t)
-\sum_{k=-M}^N f(\SEt{i}(kh))S(k,h)(\SEtInv{i}(t))
\right|
=\left|
F(x)
-\sum_{k=-M}^N F(kh) S(k,h)(x)
\right|,
\end{align*}
which is the error of the Sinc approximation~\eqref{eq:Sinc-approx}.
For the estimation,
the following function space plays an important role.
\begin{defn}
Let $L,\,R,\,\alpha,\,\beta,\,\gamma$ be positive constants,
and let $d$ be a constant with $0<d<\pi/\gamma$.
Then, $\LC_{L,R,\alpha,\beta,\gamma}^{\textSEg}(\domD_d)$
denotes a family of functions $F$
that are analytic in $\domD_d$,
and for all $\zeta\in\domD_d$ and $x\in\mathbb{R}$, satisfy
\begin{align}
 |F(\zeta)|&\leq
 \frac{L}{|1+\rme^{-\gamma\zeta}|^{\alpha/\gamma}|1+\rme^{\gamma\zeta}|^{\beta/\gamma}},
\label{ineq:LC-SE-complex}\\
 |F(x)|&\leq \frac{R}{(1+\rme^{-\gamma x})^{\alpha/\gamma}(1+\rme^{\gamma x})^{\beta/\gamma}}.
\label{ineq:LC-SE-real}
\end{align}
\end{defn}
If $F\in\LC_{L,R,\alpha,\beta,\gamma}^{\textSEg}(\domD_d)$,
the error of the Sinc approximation is estimated as below.
The proof is omitted here because it is quite similar to
that of the existing theorem for case 4~\cite[Theorem~2.4]{okayama09:_error}.
\begin{thm}
\label{thm:SE-Sinc-overall}
Let $F\in\LC_{L,R,\alpha,\beta,\gamma}^{\textSEg}(\domD_d)$,
let $\mu=\min\{\alpha,\,\beta\}$,
let $h$ be defined as in~\eqref{eq:Def-SE-h},
and let $M$ and $N$ be defined as in~\eqref{eq:Def-SE-Sinc-MN}.
Then, setting $\epsilon_n^{\textSEg}=\sqrt{n}\rme^{-\sqrt{\pi d \mu n}}$,
we have
\begin{equation*}
\sup_{x\in\mathbb{R}}\left|
F(x)
-\sum_{k=-M}^{N} F(kh) S(k,h)(x)
\right|
\leq \frac{2}{\sqrt{\pi d \mu}}
\left[
\frac{2L}{\sqrt{\pi d \mu}(1-\rme^{-2\sqrt{\pi d \mu}})\{\cos(\gamma d/2)\}^{(\alpha+\beta)/\gamma}}
+R
\right]\epsilon_n^{\textSEg}.
\end{equation*}
\end{thm}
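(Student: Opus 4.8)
The plan is to insert the full cardinal series and split the error into a \emph{discretization} part and a \emph{truncation} part,
\[
F(x)-\sum_{k=-M}^{N}F(kh)S(k,h)(x)
=\left(F(x)-\sum_{k=-\infty}^{\infty}F(kh)S(k,h)(x)\right)
+\left(\sum_{k=-\infty}^{-M-1}+\sum_{k=N+1}^{\infty}\right)F(kh)S(k,h)(x),
\]
and to bound the two pieces separately, following the proof for the finite interval in~\cite[Theorem~2.4]{okayama09:_error}. The first parenthesis (the discretization error) is controlled by the complex bound~\eqref{ineq:LC-SE-complex}, the second (the truncation error) by the real bound~\eqref{ineq:LC-SE-real}.

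For the discretization part I would first check that $F\in\Hone(\domD_d)$: the exponential decay forced by~\eqref{ineq:LC-SE-complex} as $\Re\zeta\to\pm\infty$ kills the vertical edges of the strip, while the horizontal lines $\Im\zeta=\pm d$ give finite integrals, so that the standard discretization estimate applies,
\[
\sup_{x\in\mathbb{R}}\left|F(x)-\sum_{k=-\infty}^{\infty}F(kh)S(k,h)(x)\right|
\leq\frac{\|F\|_{\Hone(\domD_d)}}{2\pi d}\cdot\frac{\rme^{-\pi d/h}}{1-\rme^{-2\pi d/h}}.
\]
The one genuinely new computation is the bound on $\|F\|_{\Hone(\domD_d)}$, for which I would use the elementary modulus inequality
\[
\bigl|1+\rme^{\pm\gamma(x+\imnum d)}\bigr|\geq\cos(\gamma d/2)\,\bigl(1+\rme^{\pm\gamma x}\bigr),\qquad x\in\mathbb{R},
\]
on $\Im\zeta=d$ (and identically on $\Im\zeta=-d$), which, writing $u=\rme^{\pm\gamma x}$ and $c=\cos(\gamma d)$ and using $\cos^2(\gamma d/2)=(1+c)/2$, reduces to $\tfrac12(1-c)(1-u)^2\geq0$; the hypothesis $0<d<\pi/\gamma$ makes $\cos(\gamma d/2)>0$. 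This extracts the factor $\{\cos(\gamma d/2)\}^{-(\alpha+\beta)/\gamma}$, and the remaining integral is bounded crudely by splitting the range at $x=0$,
\[
\int_{-\infty}^{\infty}\frac{\diff x}{(1+\rme^{-\gamma x})^{\alpha/\gamma}(1+\rme^{\gamma x})^{\beta/\gamma}}\leq\frac{1}{\alpha}+\frac{1}{\beta}\leq\frac{2}{\mu},
\]
so that $\|F\|_{\Hone(\domD_d)}$ is at most a constant multiple of $L/(\mu\{\cos(\gamma d/2)\}^{(\alpha+\beta)/\gamma})$. Since $\pi d/h=\sqrt{\pi d\mu n}$, replacing $1-\rme^{-2\pi d/h}$ by the smaller $1-\rme^{-2\sqrt{\pi d\mu}}$ (valid as $n\geq1$) and $\rme^{-\pi d/h}$ by the larger $\sqrt{n}\,\rme^{-\pi d/h}=\epsilon_n^{\textSEg}$ produces the first bracketed term.

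For the truncation part I would discard the Sinc functions through $|S(k,h)(x)|\leq1$ and invoke~\eqref{ineq:LC-SE-real}. On the right tail, $(1+\rme^{\gamma kh})^{\beta/\gamma}\geq\rme^{\beta kh}$ gives $|F(kh)|\leq R\,\rme^{-\beta kh}$, and the definition of $N$ in~\eqref{eq:Def-SE-Sinc-MN} forces $\beta Nh\geq\sqrt{\pi d\mu n}$; summing the geometric series with $\rme^{\beta h}-1\geq\beta h\geq\mu h=\sqrt{\pi d\mu/n}$ yields
\[
\sum_{k=N+1}^{\infty}\bigl|F(kh)\bigr|\leq R\,\frac{\rme^{-\beta Nh}}{\rme^{\beta h}-1}\leq\frac{R}{\sqrt{\pi d\mu}}\,\epsilon_n^{\textSEg}.
\]
The left tail is handled symmetrically with $\alpha$ and $M$ in place of $\beta$ and $N$, so the two tails together give the $R$-term; factoring out $2/\sqrt{\pi d\mu}$ then assembles the statement.

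I expect the main obstacle to be the bookkeeping for $M$ and $N$ when $\alpha\neq\beta$: one must verify that the ceilings in~\eqref{eq:Def-SE-Sinc-MN} drive both $\alpha Mh$ and $\beta Nh$ up to $\sqrt{\pi d\mu n}$, so that the two tails decay at the common rate, while the estimate $1/(\rme^{\mu h}-1)\leq\sqrt{n}/\sqrt{\pi d\mu}$ stays uniform in $n$. The complex-analytic input is entirely standard; the real effort lies in keeping every constant explicit and confirming that the crude integral and modulus estimates are consistent with the exact coefficients in the statement.
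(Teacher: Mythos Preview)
Your proposal is correct and follows precisely the approach the paper intends: the paper omits the proof of this theorem, stating only that it is ``quite similar to that of the existing theorem for case~4~\cite[Theorem~2.4]{okayama09:_error},'' which is exactly the reference you invoke and whose discretization/truncation split you reproduce. Your explicit constant computations (the modulus inequality yielding the $\cos(\gamma d/2)$ factor, the crude integral bound $\leq 2/\mu$, the geometric tail sums with $\rme^{\beta h}-1\geq\mu h$, and the verification that $\alpha Mh,\beta Nh\geq\sqrt{\pi d\mu n}$ under~\eqref{eq:Def-SE-Sinc-MN}) all check out and land on the stated constant.
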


In view of Theorem~\ref{thm:SE-Sinc-overall},
the proof is completed by checking
$F\in\LC_{L,R,\alpha,\beta,\gamma}^{\textSEg}(\domD_d)$
for cases 1, 2, and 3.
Let us examine each case individually.

\subsubsection{Proof in case 1 (Theorem~\ref{thm:SE1-Sinc-Explicit})}

The claim of
Theorem~\ref{thm:SE1-Sinc-Explicit} follows
from the next lemma.

\begin{lem}
Let the assumptions in Theorem~\ref{thm:SE1-Sinc-Explicit}
be fulfilled.
Then, the function $F(\zeta)=f(\SEt{1}(\zeta))$
belongs to $\LC_{L,R,\alpha,\beta,\gamma}^{\textSEg}(\domD_d)$
with $L=2^{\nu}K/\{\cos d\}^{(\nu-\mu)/2}$, $R=2^{\nu}K$, and $\gamma=2$.
\end{lem}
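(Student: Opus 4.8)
The plan is to verify directly the two defining inequalities of the class $\LC_{L,R,\alpha,\beta,\gamma}^{\textSEg}(\domD_d)$ for $F(\zeta)=f(\SEt{1}(\zeta))=f(\sinh\zeta)$ with $\gamma=2$. Analyticity of $F$ on $\domD_d$ is immediate: $\sinh$ is entire and maps $\domD_d$ onto $\SEt{1}(\domD_d)$, where $f$ is analytic by hypothesis, so the composition is analytic. The computational backbone is the factorization $1+\sinh^2\zeta=\cosh^2\zeta=\tfrac14(1+\rme^{2\zeta})(1+\rme^{-2\zeta})$, which on taking moduli gives $|E_1(\sinh\zeta;\gamma)|=2^{\gamma}/(|1+\rme^{2\zeta}|^{\gamma/2}|1+\rme^{-2\zeta}|^{\gamma/2})$ for any exponent $\gamma$. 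Applying this with $\gamma=\alpha$ on $\domD_d^{-}$ and with $\gamma=\beta$ on $\domD_d^{+}$ turns each hypothesis into a bound of exactly the shape in~\eqref{ineq:LC-SE-complex}, but with matching powers $\alpha/2,\alpha/2$ (left) or $\beta/2,\beta/2$ (right), whereas the target carries the \emph{mixed} powers $\alpha/2$ on $|1+\rme^{-2\zeta}|$ and $\beta/2$ on $|1+\rme^{2\zeta}|$.

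Treating the representative subcase $\mu=\alpha\le\beta=\nu$ (the other being symmetric under $\alpha\leftrightarrow\beta$, $\zeta\mapsto-\zeta$), I would reconcile the powers by two auxiliary estimates. On $\domD_d^{-}$ the needed inequality reduces to $2^{\alpha}K|1+\rme^{2\zeta}|^{(\beta-\alpha)/2}\le L$; since $\Re\zeta<0$ forces $|1+\rme^{2\zeta}|<2$ and $\beta\ge\alpha$, the left side is at most $2^{(\alpha+\beta)/2}K\le 2^{\nu}K\le L$. On $\domD_d^{+}$ it reduces to $2^{\beta}K\le L|1+\rme^{-2\zeta}|^{(\beta-\alpha)/2}$, which needs a \emph{lower} bound on $|1+\rme^{-2\zeta}|$; inserting $|1+\rme^{-2\zeta}|\ge\cos d$ makes the right side at least $L\{\cos d\}^{(\nu-\mu)/2}=2^{\nu}K=2^{\beta}K$, producing exactly the denominator $\{\cos d\}^{(\nu-\mu)/2}$ that defines $L$. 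The real-line inequality~\eqref{ineq:LC-SE-real} is cheaper: on $\Rset$ the same factorization reads $\cosh^2 x=\tfrac14(1+\rme^{-2x})(1+\rme^{2x})$, and only the elementary bounds $1\le 1+\rme^{\pm2x}<2$ are needed, giving $R=2^{\nu}K$ with no $\cos d$ factor.

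The hard part is the uniform lower bound $|1+\rme^{\pm2\zeta}|\ge\cos d$ on the strip. I would prove it by writing $\zeta=x+\imnum y$ and minimizing $|1+\rme^{-2\zeta}|^2=(1+\rme^{-2x}\cos 2y)^2+\rme^{-4x}\sin^2 2y$ over $x$ (equivalently over $u=\rme^{-2x}>0$). The delicate regime is $|y|>\pi/4$, where the real part can vanish and the minimum is attained at the interior point $u=-\cos 2y$, with minimal value $\sin^2 2y$; the bound then follows from $\sin^2 2y=4\sin^2 y\cos^2 y\ge\cos^2 y$, valid precisely because $\sin|y|>\sin(\pi/4)>1/2$. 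For $|y|\le\pi/4$ the real part stays $\ge 1$, so the claim is trivial. Since $|\Im\zeta|<d<\pi/2$ throughout, this yields $|1+\rme^{-2\zeta}|\ge\cos(\Im\zeta)>\cos d$, and the same argument with $\zeta\mapsto-\zeta$ handles $|1+\rme^{2\zeta}|$. Collecting the two subcases then delivers the stated constants $L=2^{\nu}K/\{\cos d\}^{(\nu-\mu)/2}$ and $R=2^{\nu}K$, which (one checks) feed through Theorem~\ref{thm:SE-Sinc-overall} to reproduce the $C_1$ of Theorem~\ref{thm:SE1-Sinc-Explicit}.
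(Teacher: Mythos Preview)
Your proposal is correct and follows essentially the same route as the paper. The paper's proof is a one-line citation of a lemma from~\cite{okayama12:_error} whose hypotheses are exactly your symmetrized bounds $|F(\zeta)|\le K/(|1+\rme^{-2\zeta}|^{\alpha/2}|1+\rme^{2\zeta}|^{\alpha/2})$ on $\domD_d^-$ (and with $\beta$ on $\domD_d^+$); your argument is a self-contained reconstruction of that lemma, hinging on the same factorization $1+\sinh^2\zeta=\tfrac14(1+\rme^{2\zeta})(1+\rme^{-2\zeta})$ and the same strip estimate $|1+\rme^{\pm 2\zeta}|\ge\cos d$ that produces the constant $\{\cos d\}^{(\nu-\mu)/2}$.
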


The proof is straightforward
by the next result, because
$|F(\zeta)|\leq K|E_1(\SEt{1}(\zeta);\alpha)|$
and
$|F(\zeta)|\leq K|E_2(\SEt{1}(\zeta);\beta)|$ coincide with
the inequalities in the following lemma.

\begin{lem}[Okayama~{\cite[in the proof of Lemma~5.1]{okayama12:_error}}]
Assume that $F$ is analytic in $\domD_d$ with $0<d<\pi/2$,
and that there exist positive constants $K$, $\alpha$, $\beta$
such that
\begin{align*}
|F(\zeta)|&\leq \frac{K}{|1+\rme^{-2\zeta}|^{\alpha/2} |1+\rme^{2\zeta}|^{\alpha/2}}
\intertext{holds for all $\zeta\in\domD_d^{-}$, and}
|F(\zeta)|&\leq \frac{K}{|1+\rme^{-2\zeta}|^{\beta/2} |1+\rme^{2\zeta}|^{\beta/2}}
\end{align*}
holds for all $\zeta\in\domD_d^{+}$.
Then, $F\in\LC_{L,R,\alpha,\beta,\gamma}^{\textSEg}(\domD_d)$
with $L=2^{\nu}K/\{\cos d\}^{(\nu-\mu)/2}$, $R=2^{\nu}K$, and $\gamma=2$,
where $\mu=\min\{\alpha,\,\beta\}$ and $\nu=\max\{\alpha,\,\beta\}$.
\end{lem}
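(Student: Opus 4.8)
The plan is to verify directly the two inequalities that define membership in $\LC_{L,R,\alpha,\beta,2}^{\textSEg}(\domD_d)$: the complex estimate~\eqref{ineq:LC-SE-complex} on the whole strip $\domD_d$ and the real estimate~\eqref{ineq:LC-SE-real} on $\mathbb{R}$, both with $\gamma=2$. Analyticity of $F$ on $\domD_d$ is part of the hypothesis, so only the two pointwise bounds remain. Writing $P=|1+\rme^{-2\zeta}|$ and $Q=|1+\rme^{2\zeta}|$, I would first reduce each target bound to an elementary inequality: on $\domD_d^{-}$ the hypothesis gives $|F(\zeta)|\le K/(P^{\alpha/2}Q^{\alpha/2})$, so after cancelling $P^{\alpha/2}$ the claim $|F(\zeta)|\le L/(P^{\alpha/2}Q^{\beta/2})$ follows once $K\,Q^{(\beta-\alpha)/2}\le L$; symmetrically, on $\domD_d^{+}$ the hypothesis gives $|F(\zeta)|\le K/(P^{\beta/2}Q^{\beta/2})$, and after cancelling $Q^{\beta/2}$ it suffices to show $K\,P^{(\alpha-\beta)/2}\le L$.

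The technical core is a pair of geometric estimates for $P$ and $Q$ on the two half-strips. The upper bounds are immediate from the triangle inequality: for $\Re\zeta<0$ one has $|\rme^{2\zeta}|<1$, hence $Q\le 2$, and for $\Re\zeta\ge 0$ one has $|\rme^{-2\zeta}|\le 1$, hence $P\le 2$. The harder estimates are the matching lower bounds $Q\ge\cos d$ on $\domD_d^{-}$ and $P\ge\cos d$ on $\domD_d^{+}$. I would derive these from the identity $Q^2=(1-\rme^{2x})^2+4\rme^{2x}\cos^2 y$ (with $\zeta=x+\imnum y$), which already yields $Q^2\ge 1\ge\cos^2 d$ whenever $\cos 2y\ge 0$, i.e. when $d\le\pi/4$. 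For $\pi/4<d<\pi/2$ I would use $\cos^2 y\ge\cos^2 d$ and minimise $(1-r)^2+4r\cos^2 d$ over $r=\rme^{2x}\in(0,1)$; the minimum equals $\sin^2(2d)$, attained at $r=-\cos 2d$, and the bound then rests on the elementary fact $\sin^2(2d)\ge\cos^2 d$ for $d\ge\pi/6$ (equivalently $2\sin d\ge 1$). The bound for $P$ follows from that for $Q$ by the symmetry $\zeta\mapsto-\zeta$, which interchanges $P$ with $Q$ and $\domD_d^{-}$ with $\domD_d^{+}$. I expect this uniform lower bound on the half-strip to be the main obstacle, since near $\Re\zeta=0$ with $\Im\zeta$ close to $\pm\pi/2$ the quantity $Q$ genuinely approaches $\cos d$, so no cruder estimate will suffice.

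With these estimates in hand, assembling~\eqref{ineq:LC-SE-complex} becomes bookkeeping split according to the sign of $\beta-\alpha$. On $\domD_d^{-}$, when $\beta>\alpha$ the exponent $(\beta-\alpha)/2=(\nu-\mu)/2$ is positive and $Q\le 2$ gives $K\,Q^{(\nu-\mu)/2}\le 2^{\nu}K\le L$; when $\beta<\alpha$ the exponent is negative and $Q\ge\cos d$ gives $K\,Q^{-(\nu-\mu)/2}\le K\{\cos d\}^{-(\nu-\mu)/2}\le L$; the case $\alpha=\beta$ is trivial. The computation on $\domD_d^{+}$ is identical after swapping the roles of $P$ and $Q$ and of $\alpha$ and $\beta$. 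In every case the two extremal factors combine to exactly $L=2^{\nu}K/\{\cos d\}^{(\nu-\mu)/2}$.

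Finally, for the real estimate~\eqref{ineq:LC-SE-real} I would restrict to $\zeta=x\in\mathbb{R}$, where $P=1+\rme^{-2x}$ and $Q=1+\rme^{2x}$ are positive and bounded below by $1$. Since no factor can now be smaller than $1$, only the upper bounds $P,Q\le 2$ enter, and the same sign analysis yields $K\,Q^{(\beta-\alpha)/2}\le 2^{\nu}K$ for $x<0$ and $K\,P^{(\alpha-\beta)/2}\le 2^{\nu}K$ for $x\ge 0$; hence~\eqref{ineq:LC-SE-real} holds with $R=2^{\nu}K$, with no $\cos d$ factor needed. Together with the complex estimate this establishes $F\in\LC_{L,R,\alpha,\beta,2}^{\textSEg}(\domD_d)$ with the stated $L$, $R$, and $\gamma=2$.
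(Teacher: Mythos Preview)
Your argument is correct. This paper does not supply its own proof of the lemma—it is imported verbatim from the proof of Lemma~5.1 in Okayama~\cite{okayama12:_error}—so there is nothing in the present paper to compare your approach against directly. The reduction you make to the scalar inequalities $K\,Q^{(\beta-\alpha)/2}\le L$ on $\domD_d^{-}$ and $K\,P^{(\alpha-\beta)/2}\le L$ on $\domD_d^{+}$ is exactly what is required, and your verification of the key lower bound $Q\ge\cos d$ on $\domD_d^{-}$ via the minimisation of $(1-r)^2+4r\cos^2 d$ (minimum $\sin^2 2d$ at $r=-\cos 2d$) together with the check $\sin^2 2d\ge\cos^2 d\iff\sin d\ge 1/2$ for $d>\pi/4$ is sound.

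One small remark on phrasing: your sentence ``the two extremal factors combine to exactly $L$'' overstates matters. In any single half-strip only one of the factors $2^{(\nu-\mu)/2}$ or $\{\cos d\}^{-(\nu-\mu)/2}$ is actually needed (and from $Q\le 2$ you get $2^{(\nu-\mu)/2}$, not $2^{\nu}$), and each is separately dominated by $L=2^{\nu}K/\{\cos d\}^{(\nu-\mu)/2}$ with room to spare. Your computation therefore shows that the lemma's stated $L$ and $R$ are not sharp, but they are what the lemma asserts and your bounds certainly establish them.
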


\subsubsection{Proof in case 2 (Theorem~\ref{thm:SE2-Sinc-Explicit} with $i=2$)}

The claim of
Theorem~\ref{thm:SE2-Sinc-Explicit} with $i=2$ follows
from the next lemma.

\begin{lem}
Let the assumptions in Theorem~\ref{thm:SE2-Sinc-Explicit}
be fulfilled with $i=2$.
Then, the function $F(\zeta)=f(\SEt{2}(\zeta))$
belongs to $\LC_{L,R,\alpha,\beta,\gamma}^{\textSEg}(\domD_d)$
with $L=K$, $R=K$, and $\gamma=2$.
\end{lem}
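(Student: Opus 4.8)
The plan is to verify the two defining inequalities of $\LC_{L,R,\alpha,\beta,\gamma}^{\textSEg}(\domD_d)$ directly, taking advantage of the fact that for the exponential map $\SEt{2}(\zeta)=\rme^{\zeta}$ the weight $E_2$ is tailored so as to match the $\LC$-weight exactly. First I would note that $F=f\circ\SEt{2}$ is analytic in $\domD_d$: since $\SEt{2}$ is entire and maps $\domD_d$ into $\SEt{2}(\domD_d)=\{z:|\arg z|<d\}$, where $f$ is analytic by hypothesis (and where $1+z^2\neq 0$, since this sector with $d<\pi/2$ avoids $z=\pm\imnum$), the composition is analytic throughout $\domD_d$.

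Next, for the complex estimate~\eqref{ineq:LC-SE-complex} I would substitute $z=\rme^{\zeta}$, which lies in $\SEt{2}(\domD_d)$ for every $\zeta\in\domD_d$, into the hypothesis $|f(z)|\leq K|E_2(z;\alpha,\beta)|$. Writing out $E_2(\rme^{\zeta};\alpha,\beta)=\rme^{\alpha\zeta}/(1+\rme^{2\zeta})^{(\alpha+\beta)/2}$ gives
\[
|F(\zeta)|\leq K\frac{|\rme^{\alpha\zeta}|}{|1+\rme^{2\zeta}|^{(\alpha+\beta)/2}}.
\]
The key algebraic step is then to split the denominator as $|1+\rme^{2\zeta}|^{(\alpha+\beta)/2}=|1+\rme^{2\zeta}|^{\alpha/2}\,|1+\rme^{2\zeta}|^{\beta/2}$ and to absorb the numerator using the identity
\[
\frac{|\rme^{\alpha\zeta}|}{|1+\rme^{2\zeta}|^{\alpha/2}}=\frac{1}{|1+\rme^{-2\zeta}|^{\alpha/2}},
\]
which follows from $1+\rme^{-2\zeta}=\rme^{-2\zeta}(1+\rme^{2\zeta})$ together with $|\rme^{2\zeta}|^{\alpha/2}=|\rme^{\alpha\zeta}|$. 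This recasts the right-hand side as exactly $K/\bigl(|1+\rme^{-2\zeta}|^{\alpha/2}|1+\rme^{2\zeta}|^{\beta/2}\bigr)$, i.e.\ inequality~\eqref{ineq:LC-SE-complex} with $L=K$ and $\gamma=2$.

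For the real estimate~\eqref{ineq:LC-SE-real} I would repeat the same manipulation with $\zeta=x\in\mathbb{R}$; since $\rme^{x}>0$ the moduli simply disappear and the identical rearrangement yields the bound with $R=K$ and $\gamma=2$. I do not anticipate a genuine obstacle here: in contrast to case~1, where the map $\sinh$ forces one to estimate $|1+(\sinh\zeta)^2|$ and thereby introduces the extra factors $2^{\nu}$ and $\{\cos d\}^{-(\nu-\mu)/2}$, the exponential map renders the passage between $E_2$ and the $\LC$-weight an exact identity, so that $L=R=K$ with no loss. The only thing to keep straight is the bookkeeping of the exponents $\alpha/2$, $\beta/2$, and $(\alpha+\beta)/2$ and the repeated use of $|\rme^{w}|=\rme^{\Re w}$ when passing the modulus through the exponential.
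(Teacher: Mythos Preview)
Your proposal is correct and follows essentially the same approach as the paper: the paper's proof simply states that, from inequality~\eqref{leq:Sinc-case2-alpha-beta} with $i=2$, the bounds~\eqref{ineq:LC-SE-complex} and~\eqref{ineq:LC-SE-real} hold immediately with $L=R=K$ and $\gamma=2$. You have spelled out the algebraic identity $|\rme^{\alpha\zeta}|/|1+\rme^{2\zeta}|^{\alpha/2}=1/|1+\rme^{-2\zeta}|^{\alpha/2}$ that makes this immediate, which is exactly what the paper leaves implicit.
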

\begin{proof}
From inequality~\eqref{leq:Sinc-case2-alpha-beta} with $i=2$,
\eqref{ineq:LC-SE-complex}
and~\eqref{ineq:LC-SE-real} immediately hold with $L=R=K$ and $\gamma=2$.
\end{proof}

\subsubsection{Proof in case 3
(Theorem~\ref{thm:SE2-Sinc-Explicit} with $i=3$)}

The claim of
Theorem~\ref{thm:SE2-Sinc-Explicit} follows
from the next lemma.

\begin{lem}
\label{lem:SE3-Sinc-Quad-check}
Let the assumptions in Theorem~\ref{thm:SE2-Sinc-Explicit}
be fulfilled with $i=3$.
Then, the function $F(\zeta)=f(\SEt{3}(\zeta))$
belongs to $\LC_{L,R,\alpha,\beta,\gamma}^{\textSEg}(\domD_d)$
with $L=2^{(\alpha+\beta)/2} K$,
$R=K$, and $\gamma=1$.
\end{lem}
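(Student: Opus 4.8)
The plan is to derive the two defining inequalities \eqref{ineq:LC-SE-complex} and \eqref{ineq:LC-SE-real} of $\LC_{L,R,\alpha,\beta,1}^{\textSEg}(\domD_d)$ directly from the hypothesis $|f(z)|\leq K|E_3(z;\alpha,\beta)|$ on $\SEt{3}(\domD_d)$. I would put $w=\rme^{\zeta}$ and $z=\SEt{3}(\zeta)=\arcsinh w$, so that $\sinh z=w$. Then $1+\rme^{\zeta}=1+\sinh z$ and $1+\rme^{-\zeta}=(1+\sinh z)/\sinh z$, whence
\[
\frac{1}{|1+\rme^{-\zeta}|^{\alpha}|1+\rme^{\zeta}|^{\beta}}=\frac{|\sinh z|^{\alpha}}{|1+\sinh z|^{\alpha+\beta}}.
\]
Substituting $|f|\leq K|E_3|=K\,|z/(1+z)|^{\alpha}|\rme^{-z}|^{\beta}$, the complex bound \eqref{ineq:LC-SE-complex} reduces to verifying $|P|^{\alpha}|Q|^{\beta}\leq 2^{(\alpha+\beta)/2}$ (which gives $L=2^{(\alpha+\beta)/2}K$) and the real bound \eqref{ineq:LC-SE-real} to $|P|^{\alpha}|Q|^{\beta}\leq 1$ (which gives $R=K$), where
\[
P=\frac{z(1+\rme^{-\zeta})}{1+z}=\frac{z(1+\sinh z)}{(1+z)\sinh z},\qquad Q=\rme^{-z}(1+\rme^{\zeta})=\rme^{-z}(1+\sinh z).
\]
Hence it suffices to prove $|P|\leq\sqrt2$ and $|Q|\leq\sqrt2$ on $\domD_d$, and $|P|\leq1$, $|Q|\leq1$ on $\mathbb{R}$; raising to the powers $\alpha$ and $\beta$ yields exactly the stated constants.

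The real bounds are elementary. For $x\in\mathbb{R}$ one has $z>0$ and $\sinh z=\rme^{x}>0$, so $P\leq1$ is equivalent to $z\leq\sinh z$, valid for $z\geq0$; and rewriting $Q=\rme^{-z}+\tfrac12(1-\rme^{-2z})=1-\tfrac12(1-\rme^{-z})^{2}$ shows $Q\leq1$. This already gives \eqref{ineq:LC-SE-real} with $R=K$.

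For the complex bounds I would first observe that $P$ and $Q$ are analytic and bounded on the open right half-plane $\Re w>0$: since $0<d<\pi/2$, the image $w=\rme^{\zeta}$ of $\domD_d$ lies in $\{|\arg w|<\pi/2\}$, which avoids both the branch points $w=\pm\imnum$ of $\arcsinh$ and the point $z=-1$, while $P\to1$, $Q\to1$ as $|w|\to0$ and $P\to1$, $Q\to\tfrac12$ as $|w|\to\infty$. By the maximum-modulus principle (a Phragm\'en--Lindel\"of argument on the half-plane, permissible because $P,Q$ stay bounded at both ends), it is enough to bound $|P|$ and $|Q|$ over the whole half-plane, hence on its boundary $w=\imnum s$, $s\in\mathbb{R}$. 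Using the symmetry $w\mapsto\bar w$ I may take $s\geq0$; then $\arcsinh(\imnum s)=\imnum\arcsin s$ and $\sqrt{1+w^{2}}=\sqrt{1-s^{2}}$ for $0\leq s\leq1$, while $\arcsinh(\imnum s)=\log(s+\sqrt{s^{2}-1})+\imnum\tfrac{\pi}{2}$ and $\sqrt{1+w^{2}}=\imnum\sqrt{s^{2}-1}$ for $s\geq1$, so that $|P|$ and $|Q|$ become explicit real functions of $s$ whose suprema can be computed. The extremal configuration is $w\to\imnum$, where $z\to\imnum\pi/2$ and $\sqrt{1+w^{2}}\to0$, giving $|Q|\to\sqrt2$ and $|P|<\sqrt2$; thus $|P|,|Q|\leq\sqrt2$ on $\domD_d$ for every $d<\pi/2$, which is \eqref{ineq:LC-SE-complex} with $L=2^{(\alpha+\beta)/2}K$.

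The main obstacle is precisely this boundary estimate. One must track the multivalued $\arcsinh$ and the square root $\sqrt{1+w^{2}}$ across the corner $w=\pm\imnum$, where both become singular yet $P$ and $Q$ remain finite, verify that \emph{each} of $P$ and $Q$ (not merely their product) is bounded by $\sqrt2$, and justify passing from the domain to the boundary of the half-plane despite its unboundedness. Isolating $w=\imnum$ as the unique extremal point is what produces the clean factor $2^{(\alpha+\beta)/2}=(\sqrt2)^{\alpha}(\sqrt2)^{\beta}$, and hence the stated values $L=2^{(\alpha+\beta)/2}K$, $R=K$, and $\gamma=1$.
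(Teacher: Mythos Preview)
Your reduction to the two factors $P$ and $Q$ is exactly the decomposition the paper uses: the inequalities $|P|\le\sqrt2$, $|Q|\le\sqrt2$ on $\domD_d$ and $P\le1$, $Q\le1$ on $\mathbb{R}$ are precisely the contents of Lemmas~\ref{lem:asinh-SE} and~\ref{lem:exp-asinh-SE} (note $\rme^{-z}=1/(\rme^{\zeta}+\sqrt{1+\rme^{2\zeta}})$), which the paper simply quotes from an earlier article and then combines in one line. So at the structural level the two proofs coincide.

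The only substantive difference is in how those auxiliary bounds are established. The paper cites them; you propose to derive them by a Phragm\'en--Lindel\"of argument on the right half-plane $\{\Re w>0\}$. That route is legitimate---$P$ and $Q$ are bounded analytic there, so the supremum is attained on the imaginary axis---but your boundary analysis is only sketched. In particular, saying that ``the extremal configuration is $w\to\imnum$'' does not by itself prove $|P|\le\sqrt2$ everywhere on the axis; you still owe the verification (straightforward with your parametrizations $z=\imnum\arcsin s$ for $0\le s\le1$ and $z=\log(s+\sqrt{s^{2}-1})+\imnum\pi/2$ for $s\ge1$) that $|P(\imnum s)|<\sqrt2$ for \emph{all} $s$, not just at the point where $|Q|$ happens to peak. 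Once that boundary check is completed and the continuity of $P,Q$ through the branch points $w=\pm\imnum$ is made explicit, your argument reproduces exactly Lemmas~\ref{lem:asinh-SE}--\ref{lem:exp-asinh-SE} and hence the paper's proof.
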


For the proof,
let us prepare some useful inequalities
(Lemmas~\ref{lem:asinh-SE} and~\ref{lem:exp-asinh-SE}).

\begin{lem}[Okayama~{\cite[Lemma~5.4]{okayama12:_error}}]
\label{lem:asinh-SE}
For all $\zeta\in\overline{\domD_{\pi/2}}$ and $x\in\mathbb{R}$,
we have
\begin{align*}
\left|
\frac{\arcsinh(\rme^{\zeta})}{1+\arcsinh(\rme^{\zeta})}
\right|
&\leq\sqrt{2}
\left|\frac{\rme^{\zeta}}{1+\rme^{\zeta}}\right|,\\
\frac{\arcsinh(\rme^{x})}{1+\arcsinh(\rme^{x})}
&\leq\frac{\rme^{x}}{1+\rme^{x}}.
\end{align*}
\end{lem}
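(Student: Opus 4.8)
The plan is to prove the two inequalities separately: the real one is elementary, while the complex one reduces, via the maximum modulus principle, to an estimate on the imaginary axis. First I would dispose of the real inequality. Writing $s=\rme^{x}>0$ and using that $t\mapsto t/(1+t)$ is strictly increasing on $(0,\infty)$, the claim is equivalent to $\arcsinh(s)\leq s$, which is immediate from $\arcsinh(0)=0$ together with $\arcsinh'(s)=1/\sqrt{1+s^{2}}\leq 1$ (equivalently, from $\sinh u\geq u$ for $u\geq 0$).

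For the complex inequality I would set $w=\rme^{\zeta}$. Since $|\Im\zeta|\leq\pi/2$ gives $\arg w\in[-\pi/2,\pi/2]$, the variable $w$ ranges over the closed right half-plane minus the origin, and the assertion becomes $|G(w)|\leq\sqrt{2}$ for
\[
 G(w)=\frac{\arcsinh(w)\,(1+w)}{w\,(1+\arcsinh(w))}.
\]
The function $G$ is analytic in the open right half-plane: the branch points $\pm\imnum$ of $\arcsinh$ lie on the imaginary axis, $1+\arcsinh(w)$ vanishes only at $w=-\sinh 1<0$, and $w=0$ is a removable singularity because $\arcsinh(w)/w\to 1$. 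Furthermore $G$ extends continuously to the closed half-plane (at $\pm\imnum$ one has the finite values $\arcsinh(\pm\imnum)=\pm\imnum\pi/2$), and $|G(w)|\to 1$ as $|w|\to\infty$ since $\arcsinh(w)\sim\log(2w)$. Hence $G$ is bounded, and applying the maximum modulus principle on the truncated regions $\{\Re w\geq 0,\ |w|\leq R\}$ and letting $R\to\infty$ (a Phragm\'{e}n--Lindel\"{o}f type argument, using that $|G|\to 1$ on the large semicircle) reduces the problem to proving $|G(\imnum y)|\leq\sqrt{2}$ for all real $y$.

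On the imaginary axis I would split by the size of $y$; by $G(\bar w)=\overline{G(w)}$ it suffices to take $y\geq 0$. For $0\leq y\leq 1$ one has $\arcsinh(\imnum y)=\imnum\phi$ with $\phi=\arcsin y\in[0,\pi/2]$, whence $|G(\imnum y)|^{2}=\phi^{2}(1+\sin^{2}\phi)/\{\sin^{2}\phi\,(1+\phi^{2})\}$; a direct manipulation shows this is $\leq 2$ exactly when $(\phi\cot\phi)^{2}\leq 2$, and the latter holds because $\tan\phi\geq\phi$ forces $\phi\cot\phi\leq 1$. For $y>1$ one has $\arcsinh(\imnum y)=r+\imnum\pi/2$ with $y=\cosh r$, $r\geq 0$, so that
\[
 |G(\imnum y)|^{2}=\frac{r^{2}+\pi^{2}/4}{(1+r)^{2}+\pi^{2}/4}\cdot\frac{1+\cosh^{2}r}{\cosh^{2}r}\leq 1\cdot 2=2,
\]
the first factor being at most $1$ and the second equal to $1+\sech^{2}r\leq 2$. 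This yields $|G(\imnum y)|\leq\sqrt{2}$ everywhere on the axis, completing the reduction.

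The hard part will not be the boundary estimate, which collapses to the elementary inequalities $\tan\phi\geq\phi$ and $\cosh r\geq 1$, but rather the justification of the passage from the half-plane to its boundary: one must verify analyticity across the removable singularity at $w=0$, confirm that the branch points $\pm\imnum$ are harmless because they lie on the boundary where $G$ stays continuous, and make the maximum-principle argument rigorous on the unbounded half-plane through the growth bound $|G(w)|\to 1$.
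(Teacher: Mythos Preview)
The paper does not prove this lemma; it is quoted verbatim from an external reference (Okayama~\cite[Lemma~5.4]{okayama12:_error}) and used as a black box in the proof of Lemma~\ref{lem:SE3-Sinc-Quad-check}. There is therefore no in-paper argument to compare your attempt against.

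That said, your proposed proof is correct and self-contained. The real inequality is immediate from $\arcsinh s\leq s$ together with the monotonicity of $t\mapsto t/(1+t)$. For the complex inequality, the maximum-modulus reduction to the imaginary axis is properly set up: $G$ is analytic in the open right half-plane (the only candidate singularities $w=0$, $w=\pm\imnum$, $w=-\sinh 1$ are either removable, on the boundary, or outside the region), extends continuously to the closure via one-sided limits along the branch cut, and satisfies $|G(w)|\to 1$ uniformly as $|w|\to\infty$, so the truncated-semidisk argument goes through. On the boundary your two computations are clean: for $0\leq y\leq 1$ the algebraic reduction
\[
\frac{\phi^{2}(1+\sin^{2}\phi)}{\sin^{2}\phi\,(1+\phi^{2})}\leq 2
\quad\Longleftrightarrow\quad
\phi^{2}\cos^{2}\phi\leq 2\sin^{2}\phi
\quad\Longleftrightarrow\quad
(\phi\cot\phi)^{2}\leq 2
\]
is exact, and $\phi\cot\phi\leq 1$ follows from $\tan\phi\geq\phi$; for $y>1$ the factorisation into a factor $\leq 1$ and a factor $1+\sech^{2}r\leq 2$ is immediate once one checks (as you do) that the right-half-plane limit of $\arcsinh(\imnum y)$ is $r+\imnum\pi/2$ with $y=\cosh r$.

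The points you flag as ``the hard part'' (removability at $w=0$, continuity across $\pm\imnum$, and making the Phragm\'en--Lindel\"of step rigorous) are all routine once stated carefully; none of them hides a genuine obstruction. Without access to the cited source I cannot say whether the original proof in~\cite{okayama12:_error} also proceeds via the maximum modulus principle, but your argument stands on its own.
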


\begin{lem}[Okayama~{\cite[Lemma~5.5]{okayama12:_error}}]
\label{lem:exp-asinh-SE}
For all $\zeta\in\overline{\domD_{\pi/2}}$
and $x\in\mathbb{R}$, we have
\begin{align*}
\frac{1}{|\rme^{\zeta}+\sqrt{1+\rme^{2\zeta}}|}
&\leq\frac{\sqrt{2}}{|1+\rme^{\zeta}|},\\
\frac{1}{\rme^{x}+\sqrt{1+\rme^{2x}}}
&\leq\frac{1}{1+\rme^{x}}.
\end{align*}
\end{lem}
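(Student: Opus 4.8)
The plan is to dispatch the real inequality at once and then attack the complex one by a maximum-modulus argument that reduces it to a short boundary computation on the edges of the strip. The organizing identity is
$\rme^\zeta + \sqrt{1+\rme^{2\zeta}} = \rme^{\SEt{3}(\zeta)}$, which follows from $\SEt{3}(\zeta)=\arcsinh(\rme^\zeta)$ together with $\rme^{\arcsinh w}=w+\sqrt{1+w^2}$; this already shows the denominator never vanishes and is analytic exactly where $\arcsinh(\rme^\zeta)$ is. For the real inequality, taking reciprocals turns the claim into $\rme^x+\sqrt{1+\rme^{2x}}\ge 1+\rme^x$, i.e. $\sqrt{1+\rme^{2x}}\ge 1$, which is immediate.

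For the complex inequality I would cross-multiply to the equivalent form $|1+\rme^\zeta|\le\sqrt2\,|\rme^\zeta+\sqrt{1+\rme^{2\zeta}}|$, that is $|G(\zeta)|\le\sqrt2$ with $G(\zeta)=(1+\rme^\zeta)/(\rme^\zeta+\sqrt{1+\rme^{2\zeta}})$. The branch points $\pm\imnum$ of $\arcsinh$ correspond to $\rme^\zeta=\pm\imnum$, which lie on the boundary lines $\Im\zeta=\pm\pi/2$; hence $G$ is analytic throughout the open strip $\domD_{\pi/2}$. It is also bounded there, with $G\to 1$ as $\Re\zeta\to-\infty$ and $G\to 1/2$ as $\Re\zeta\to+\infty$ (where $\sqrt{1+\rme^{2\zeta}}\sim\rme^\zeta$). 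A Phragm\'{e}n--Lindel\"{o}f / maximum-modulus argument on the strip then lets me replace $\sup_{\domD_{\pi/2}}|G|$ by the supremum over the two boundary lines $\Im\zeta=\pm\pi/2$.

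It remains to check the boundary. Using the reflection symmetry $G(\bar\zeta)=\overline{G(\zeta)}$, it suffices to take $\zeta=\xi+\imnum\pi/2$, so that $\rme^\zeta=\imnum y$ with $y=\rme^\xi>0$. When $y\le 1$ the root $\sqrt{1-y^2}$ is real and one computes $|G|^2=(1+y^2)/|\sqrt{1-y^2}+\imnum y|^2=(1+y^2)/1=1+y^2\le 2$; when $y>1$ the correct limiting branch (from inside the strip) is $\sqrt{1-y^2}=\imnum\sqrt{y^2-1}$, giving $|G|^2=(1+y^2)/(y+\sqrt{y^2-1})^2$, which is $\le 2$ by an elementary estimate. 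Both cases yield $|G|\le\sqrt2$, with equality at $y=1$, i.e. $\rme^\zeta=\imnum$, so the constant $\sqrt2$ is sharp.

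The hard part is the boundary $\Im\zeta=\pm\pi/2$ itself: there $1+\rme^{2\zeta}$ runs along the principal cut $(-\infty,-1]$ of the square root and passes through the branch points $\rme^\zeta=\pm\imnum$, so one must pin down $\sqrt{1+\rme^{2\zeta}}$ as the limit from the interior (this is what forces the sign $\imnum\sqrt{y^2-1}$ when $y>1$) and split into the cases $|\rme^\zeta|\lessgtr 1$. A secondary technical point is legitimizing the maximum-modulus reduction on an \emph{unbounded} strip, which the boundedness of $G$ at both ends secures. If one prefers to avoid complex-analytic machinery entirely, an alternative is to write $w=\rme^\zeta=a+b\imnum$ and $\sqrt{1+w^2}=p+q\imnum$ and use the relations $p^2-q^2=1+a^2-b^2$ and $pq=ab$ (from $s^2=1+w^2$) to collapse $|1+w|^2\le 2|w+\sqrt{1+w^2}|^2$ into a purely real polynomial inequality in $a,b$; this route is more computational but sidesteps the Phragm\'{e}n--Lindel\"{o}f step.
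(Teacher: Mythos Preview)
The paper does not prove this lemma at all: it is quoted verbatim from Okayama~\cite[Lemma~5.5]{okayama12:_error} and used as a black box in the proof of Lemma~\ref{lem:SE3-Sinc-Quad-check}. So there is no in-paper argument to compare against.

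Your proposal is correct. The real inequality is indeed immediate from $\sqrt{1+\rme^{2x}}\ge 1$. For the complex inequality, your reduction to $|G(\zeta)|\le\sqrt{2}$ with $G(\zeta)=(1+\rme^{\zeta})\rme^{-\arcsinh(\rme^{\zeta})}$ is clean, and the boundary computation is right: for $\rme^{\zeta}=\imnum y$ with $0<y\le 1$ one gets $|G|^2=1+y^2\le 2$, and for $y>1$ one gets $|G|^2=(1+y^2)/(y+\sqrt{y^2-1})^2$, where the claimed bound follows from $2(y+\sqrt{y^2-1})^2-(1+y^2)=3(y^2-1)+4y\sqrt{y^2-1}\ge 0$. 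Equality at $y=1$ shows the constant $\sqrt{2}$ is sharp, as you note.

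Two small points worth tightening in a write-up. First, the Phragm\'en--Lindel\"of step: you assert $G$ is bounded on the open strip because of its limits at $\Re\zeta\to\pm\infty$, but those limits should be checked to hold \emph{uniformly} in $\Im\zeta$ on the open strip; this is easy at $-\infty$, and at $+\infty$ it follows from $\sqrt{1+\rme^{2\zeta}}=\rme^{\zeta}\sqrt{1+\rme^{-2\zeta}}$ since $|\arg\rme^{2\zeta}|<\pi$ there. Second, continuity of $G$ up to the closed boundary (needed both for the maximum principle and because the lemma is stated on $\overline{\domD_{\pi/2}}$) hinges on the one-sided limit of $\sqrt{1+\rme^{2\zeta}}$ across the cut, which you correctly identify; it is worth stating explicitly that at the branch point $y=1$ the square root vanishes, so no ambiguity arises. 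With those remarks made precise, the argument is complete.
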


By using the lemmas above,
Lemma~\ref{lem:SE3-Sinc-Quad-check}
can be proved as follows.

\begin{proof}
From inequality~\eqref{leq:Sinc-case2-alpha-beta} with $i=3$,
it follows that
\begin{align*}
|F(\zeta)|\leq
 K
\left|\frac{\arcsinh(\rme^{\zeta})}{1+\arcsinh(\rme^{\zeta})}\right|^{\alpha}
\left|\frac{1}{\rme^{\zeta}+\sqrt{1+\rme^{2\zeta}}}\right|^{\beta}.
\end{align*}
From Lemmas~\ref{lem:asinh-SE} and~\ref{lem:exp-asinh-SE},
it holds that
\begin{align*}
|F(\zeta)|
&\leq K\left|\frac{\sqrt{2}}{1+\rme^{-\zeta}}\right|^{\alpha}
\left|\frac{\sqrt{2}}{1+\rme^{\zeta}}\right|^{\beta}
= 2^{(\alpha+\beta)/2}K
\left|\frac{1}{1+\rme^{-\zeta}}\right|^{\alpha}
\left|\frac{1}{1+\rme^{\zeta}}\right|^{\beta}
\end{align*}
for all $\zeta\in\domD_d$. For $x\in\mathbb{R}$, it holds that
\begin{align*}
|F(x)|
&\leq K\left(\frac{1}{1+\rme^{-x}}\right)^{\alpha}
\left(\frac{1}{1+\rme^{x}}\right)^{\beta}.
\end{align*}
This completes the proof.
\end{proof}

\subsection{In the case of the DE-Sinc approximation}

Next, the error of the DE-Sinc approximation is estimated.
In this case,
we again estimate the error of the Sinc approximation~\eqref{eq:Sinc-approx}
in view of the following term:
\begin{align*}
\left|
f(t)
-\sum_{k=-M}^N f(\DEt{i}(kh))S(k,h)(\DEtInv{i}(t))
\right|
=\left|
F(x)
-\sum_{k=-M}^N F(kh) S(k,h)(x)
\right|,
\end{align*}
where we set $F(x)=f(\DEt{i}(x))$.
Because of differences in the variable transformation,
instead of $\LC_{L,R,\alpha,\beta,\gamma}^{\textSEg}(\domD_d)$,
we require the following function space.
\begin{defn}
Let $L,\,R,\,\alpha,\,\beta$ be positive constants,
and let $d$ be a constant with $0<d<\pi/2$.
Then, $\LC_{L,R,\alpha,\beta}^{\textDEg}(\domD_d)$
denotes a family of functions $F$
that are analytic in $\domD_d$,
and for all $\zeta\in\domD_d$ and $x\in\mathbb{R}$, satisfy
\begin{align}
 |F(\zeta)|&\leq \frac{L}{|1+\rme^{-\pi\sinh\zeta}|^{\alpha/2}|1+\rme^{\pi\sinh\zeta}|^{\beta/2}},
\label{ineq:LC-DE-complex}\\
 |F(x)|&\leq \frac{R}{(1+\rme^{-\pi\sinh x})^{\alpha/2}(1+\rme^{\pi\sinh x})^{\beta/2}}.
\label{ineq:LC-DE-real}
\end{align}
\end{defn}
If $F\in\LC_{L,R,\alpha,\beta}^{\textDEg}(\domD_d)$,
the errors of the Sinc approximation are estimated as below.
%
%
The proofs are omitted because they are quite similar to
that of the existing theorem for
case~4~\cite[Theorem~2.11]{okayama09:_error}.
\begin{thm}
\label{thm:DE-Sinc-overall}
Let $F\in\LC_{L,R,\alpha,\beta}^{\textDEg}(\domD_d)$,
let $\mu=\min\{\alpha,\,\beta\}$,
let $\nu=\max\{\alpha,\,\beta\}$,
let $h$ be defined as in~\eqref{eq:Def-DE-h},
and let $M$ and $N$ be defined as in~\eqref{eq:Def-DE-Sinc-MN}.
Furthermore,
let $n$ be taken sufficiently large so that
$n\geq (\nu \rme)/(4 d)$ holds.
Then,
setting $\epsilon_n^{\textDEg}=\rme^{-\pi d n/\log(4 d n/\mu)}$,
we have
\begin{align*}
&\sup_{x\in\mathbb{R}}\left|
F(x)
-\sum_{k=-M}^{N} F(kh)S(k,h)(x)
\right|
\leq \frac{2}{\pi d \mu}
\left[
\frac{4L}{\pi(1-\rme^{-\pi\mu\rme/2})\{\cos(\frac{\pi}{2}\sin d)\}^{(\alpha+\beta)/2}\cos d}
+\mu R\rme^{\pi\nu/4}
\right]\epsilon_n^{\textDEg}.
\end{align*}
\end{thm}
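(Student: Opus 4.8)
The plan is to reconstruct the standard two-stage error analysis for the Sinc approximation and specialize every estimate to the double-exponential profile $\rme^{\pm\pi\sinh\zeta}$. First I would split the error into a discretization part and a truncation part,
\begin{align*}
F(x)-\sum_{k=-M}^N F(kh)S(k,h)(x)
={}&\left[F(x)-\sum_{k=-\infty}^{\infty}F(kh)S(k,h)(x)\right]\\
&+\sum_{k=-\infty}^{-M-1}F(kh)S(k,h)(x)
+\sum_{k=N+1}^{\infty}F(kh)S(k,h)(x),
\end{align*}
and bound the bracketed discretization error and the two truncation tails separately. This is exactly the decomposition used for the finite-interval case in \cite[Theorem~2.11]{okayama09:_error}; the novelty is entirely in the function-space bounds \eqref{ineq:LC-DE-complex}--\eqref{ineq:LC-DE-real}.

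For the discretization error I would invoke the classical bound for $F\in\Hone(\domD_d)$, which controls $\sup_x$ of the first bracket by a constant multiple of $\frac{\rme^{-\pi d/h}}{1-\rme^{-2\pi d/h}}$ times the boundary integral $\int_{-\infty}^{\infty}(|F(t+\imnum d)|+|F(t-\imnum d)|)\diff t$. Since the choice \eqref{eq:Def-DE-h} gives $\rme^{-\pi d/h}=\epsilon_n^{\textDEg}$, the remaining task is to estimate this boundary integral through \eqref{ineq:LC-DE-complex}. The essential ingredient is a lower bound for $|1+\rme^{\pm\pi\sinh\zeta}|$ on the lines $\Im\zeta=\pm d$; establishing that this quantity stays above a multiple of $\cos(\frac{\pi}{2}\sin d)$ times $(1+\rme^{\pm\pi\sinh t\cos d})$ is precisely what produces the factor $\{\cos(\frac{\pi}{2}\sin d)\}^{(\alpha+\beta)/2}$. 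After this reduction the integrand depends on $t$ only through $s=\sinh t\cos d$; substituting and using $\diff s=\cosh t\cos d\diff t\ge\cos d\diff t$ yields the lone $1/\cos d$ factor and reduces the problem to the elementary convergent integral $\int_{-\infty}^{\infty}(1+\rme^{-\pi s})^{-\alpha/2}(1+\rme^{\pi s})^{-\beta/2}\diff s$, which is bounded by $4/(\pi\mu)$. Assembling these pieces gives the first bracketed term in $C$.

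For the truncation error I would use $|S(k,h)(x)|\le1$ together with the real-axis decay \eqref{ineq:LC-DE-real}. On the positive tail, $(1+\rme^{-\pi\sinh(kh)})^{\alpha/2}\ge1$ and $(1+\rme^{\pi\sinh(kh)})^{\beta/2}\ge\rme^{(\pi\beta/2)\sinh(kh)}$, so $|F(kh)|\le R\,\rme^{-(\pi\beta/2)\sinh(kh)}$; the elementary inequality $\sinh(kh)\ge(\rme^{kh}-1)/2$ for $kh\ge0$ then gives $|F(kh)|\le R\,\rme^{\pi\beta/4}\rme^{-(\pi\beta/4)\rme^{kh}}\le R\,\rme^{\pi\nu/4}\rme^{-(\pi\beta/4)\rme^{kh}}$, and this is exactly where the factor $\rme^{\pi\nu/4}$ arises. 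Summing the resulting super-geometrically decaying tail from $k=N+1$ and inserting the definitions \eqref{eq:Def-DE-h} and \eqref{eq:Def-DE-Sinc-MN} (so that $\rme^{Nh}$ is comparable to $4dn/\beta$) bounds the tail by a constant multiple of $\epsilon_n^{\textDEg}$; the mirror-image argument with $\alpha$ and $M$ treats the negative tail. Collecting both tails produces the second bracketed term $\mu R\,\rme^{\pi\nu/4}$.

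Finally I would add the two contributions and factor out $\frac{2}{\pi d\mu}\epsilon_n^{\textDEg}$. The hypothesis $n\ge\nu\rme/(4d)$ is used here: writing $g(n)=n/\log(4dn/\mu)$, one checks that $g$ is increasing once $4dn/\mu\ge\rme$ and that $g(\mu\rme/(4d))=\mu\rme/(4d)$, so the hypothesis forces $2\pi d/h=2\pi d\,g(n)\ge\pi\mu\rme/2$ and hence $1-\rme^{-2\pi d/h}\ge1-\rme^{-\pi\mu\rme/2}$, which is the $n$-independent denominator appearing in $C$. The main obstacle is the discretization step, specifically the sharp lower bound on $|1+\rme^{\pm\pi\sinh\zeta}|$ over the strip that yields the clean $\cos(\frac{\pi}{2}\sin d)$ dependence; this is the double-exponential analogue of Lemmas~\ref{lem:asinh-SE}--\ref{lem:exp-asinh-SE} and would be imported from the companion quadrature analysis \cite{okayama12:_error}. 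The truncation estimate, by contrast, is routine once the bound $\sinh(kh)\ge(\rme^{kh}-1)/2$ is in hand.
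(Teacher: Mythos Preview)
Your plan is correct and follows essentially the same route the paper intends: the paper omits the proof because it mirrors \cite[Theorem~2.11]{okayama09:_error}, and your discretization/truncation split with Stenger's $\Hone(\domD_d)$ bound, the strip estimate for $|1+\rme^{\pm\pi\sinh\zeta}|$, and the tail bound via $\sinh(kh)\ge(\rme^{kh}-1)/2$ is exactly that argument. One small correction: the key inequality $|1+\rme^{\pm\pi\sinh(x+\imnum y)}|^{-1}\le\{(1+\rme^{\pm\pi\sinh(x)\cos y})\cos(\tfrac{\pi}{2}\sin y)\}^{-1}$ is already stated in this paper as Lemma~\ref{lem:DEfunc-estim} (from \cite{okayama09:_error}, not \cite{okayama12:_error}), so you can invoke it directly rather than import it.
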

In view of Theorem~\ref{thm:DE-Sinc-overall},
our main task here is to check the assumption that
$F\in\LC_{L,R,\alpha,\beta}^{\textDEg}(\domD_d)$.
The next lemma is useful for the proofs.
\begin{lem}[Okayama et al.~{\cite[Lemma~4.22]{okayama09:_error}}]
\label{lem:DEfunc-estim}
Let $x,\,y\in\mathbb{R}$ with $|y|<\pi/2$.
Then, it holds that
\begin{align*}
\left|\frac{1}{1+\rme^{\pi\sinh(x+\imnum y)}}\right|
&\leq\frac{1}{(1+\rme^{\pi\sinh(x)\cos y})\cos(\frac{\pi}{2}\sin y)},\\
\left|\frac{1}{1+\rme^{-\pi\sinh(x+\imnum y)}}\right|
&\leq\frac{1}{(1+\rme^{-\pi\sinh(x)\cos y})\cos(\frac{\pi}{2}\sin y)}.
\end{align*}
\end{lem}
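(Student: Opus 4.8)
The plan is to reduce both stated inequalities to a single real trigonometric inequality in which $x$ and $y$ are coupled, and then to dispatch that inequality, which is where the real content lies.

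First I would compute the real and imaginary parts of the exponent. Writing $w=\pi\sinh(x+\imnum y)=\pi\sinh x\cos y+\imnum\,\pi\cosh x\sin y$, I set $u=\pi\sinh x\cos y$ and $v=\pi\cosh x\sin y$. Expanding $1+\rme^{w}=(1+\rme^{u}\cos v)+\imnum\,\rme^{u}\sin v$ gives $|1+\rme^{w}|^2=1+2\rme^{u}\cos v+\rme^{2u}$. Since $1+\rme^{u}>0$ and, because $|y|<\pi/2$ forces $|\tfrac{\pi}{2}\sin y|<\pi/2$ and hence $\cos(\tfrac{\pi}{2}\sin y)>0$, the first claimed bound is equivalent to $|1+\rme^{w}|^2\ge(1+\rme^{u})^2\cos^2(\tfrac{\pi}{2}\sin y)$. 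Dividing by $(1+\rme^{u})^2$ and using $1-\cos v=2\sin^2(v/2)$ together with $\tfrac{4\rme^{u}}{(1+\rme^{u})^2}=\sech^2(u/2)$, the left-hand ratio becomes $1-\sin^2(v/2)/\cosh^2(u/2)$. Thus the whole statement collapses to
\[
 \frac{\sin^2\!\bigl(\tfrac{\pi}{2}\cosh x\sin y\bigr)}{\cosh^2\!\bigl(\tfrac{\pi}{2}\sinh x\cos y\bigr)}\le\sin^2\!\Bigl(\tfrac{\pi}{2}\sin y\Bigr).
\]

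Writing $a=\tfrac{\pi}{2}\sin y$ and $b=\tfrac{\pi}{2}\cos y$ (so $a\in[0,\pi/2)$, $b\in(0,\pi/2]$ and, crucially, $a^2+b^2=(\pi/2)^2$) and reducing to $x\ge0$, $y\ge0$ by parity, the target becomes $|\sin(a\cosh x)|\le\sin(a)\cosh(b\sinh x)$. This is the main obstacle, and I expect the constraint $a^2+b^2=(\pi/2)^2$ to be essential (the bound is sharp at $x=0$ and fails to decouple into separate conditions on $a$ and $b$). My plan for it is to peel off the leading term: from $\sin(a\cosh x)=\sin a\cos(a(\cosh x-1))+\cos a\sin(a(\cosh x-1))$ and $\sin a,\cos a\ge0$ one gets $|\sin(a\cosh x)|\le\sin a+\cos a\,|\sin(a(\cosh x-1))|$, so it suffices to prove $\cos a\,|\sin(a(\cosh x-1))|\le\sin a\,(\cosh(b\sinh x)-1)$.

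To finish this, I would bound the left side by $a\cos a\,(\cosh x-1)$ using $|\sin\theta|\le|\theta|$, and the right side from below by $\sin a\cdot\tfrac12 b^2\sinh^2 x$ using $\cosh z-1\ge z^2/2$; then, substituting $\cosh x-1=2\sinh^2(x/2)$ and $\sinh^2 x=4\sinh^2(x/2)\cosh^2(x/2)$ and cancelling $2\sinh^2(x/2)$, the claim reduces to $a\cos a\le\sin a\,b^2\cosh^2(x/2)$. Since $\cosh^2(x/2)\ge1$, it is enough to treat the worst case $\cosh^2(x/2)=1$, i.e. $a\cot a\le b^2=(\pi/2)^2-a^2$, that is $a\cot a+a^2\le(\pi/2)^2$ for $a\in[0,\pi/2)$. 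This one–variable inequality I would settle by monotonicity: after multiplying its derivative by $\sin^2 a$ the condition becomes $\tfrac12\sin2a\ge a\cos2a$, which holds because $\tan2a\ge2a$ on $(0,\pi/4)$ and the right side is nonpositive on $[\pi/4,\pi/2)$, so the left side increases to its limit $(\pi/2)^2$ as $a\to\pi/2$. Finally, the second inequality follows from the first under $(x,y)\mapsto(-x,-y)$: since $\sinh$ is odd this sends $\rme^{\pi\sinh(x+\imnum y)}$ to $\rme^{-\pi\sinh(x+\imnum y)}$ and $u$ to $-u$ while leaving $\cos(\tfrac{\pi}{2}\sin y)$ unchanged, which is exactly the form of the second bound. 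The only delicate step is the coupled trigonometric inequality $|\sin(a\cosh x)|\le\sin(a)\cosh(b\sinh x)$; everything else is bookkeeping.
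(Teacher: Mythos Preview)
The paper does not prove this lemma; it simply quotes it from the earlier work~\cite{okayama09:_error}, so there is no in-paper argument to compare against. Your proof is correct and self-contained. The reduction of both inequalities to the single estimate $|\sin(a\cosh x)|\le\sin(a)\,\cosh(b\sinh x)$ under the constraint $a^{2}+b^{2}=(\pi/2)^{2}$ is exact (your computation $|1+\rme^{u+\imnum v}|^{2}/(1+\rme^{u})^{2}=1-\sin^{2}(v/2)/\cosh^{2}(u/2)$ is right), and the subsequent chain---bounding the left via $|\sin\theta|\le|\theta|$ and the right via $\cosh z-1\ge z^{2}/2$, then using $\cosh x-1=2\sinh^{2}(x/2)$ and $\sinh^{2}x=4\sinh^{2}(x/2)\cosh^{2}(x/2)$---correctly collapses everything to the one-variable inequality $a\cot a+a^{2}\le(\pi/2)^{2}$ on $[0,\pi/2)$. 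Your monotonicity argument for $g(a)=a\cot a+a^{2}$ via $g'(a)\sin^{2}a=\tfrac{1}{2}\sin 2a-a\cos 2a\ge 0$ is valid on $(0,\pi/2)$, and since $g(a)\uparrow(\pi/2)^{2}$ as $a\to(\pi/2)^{-}$ this closes the bound. The symmetry $(x,y)\mapsto(-x,-y)$ indeed yields the second inequality from the first.
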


\subsubsection{Proof in case 1 (Theorem~\ref{thm:DE1-Sinc-explicit})}

The claim of
Theorem~\ref{thm:DE1-Sinc-explicit} follows
from the next lemma.

\begin{lem}
Let the assumptions in Theorem~\ref{thm:DE1-Sinc-explicit}
be fulfilled.
Then, the function $F(\zeta)=f(\DEt{1}(\zeta))$
belongs to $\LC_{L,R,\alpha,\beta}^{\textDEg}(\domD_d)$
with $L=2^{\nu}K/\{\cos(\frac{\pi}{2}\sin d)\}^{(\nu-\mu)/2}$
and $R=2^{\nu}K$.
\end{lem}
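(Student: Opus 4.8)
The plan is to verify, directly from the hypotheses \eqref{leq:Sinc-case1-alpha}--\eqref{leq:Sinc-case1-beta}, the two defining inequalities \eqref{ineq:LC-DE-complex} and \eqref{ineq:LC-DE-real} of the space $\LC_{L,R,\alpha,\beta}^{\textDEg}(\domD_d)$. Analyticity of $F=f\circ\DEt{1}$ on $\domD_d$ is immediate, since $\DEt{1}$ is entire and $f$ is analytic on $\DEt{1}(\domD_d)$. For the size estimates I would set $w=\tfrac{\pi}{2}\sinh\zeta$, so that $\DEt{1}(\zeta)=\sinh w$, and use the identity $1+\sinh^2 w=\cosh^2 w$ to get
\[
|E_1(\DEt{1}(\zeta);\gamma)|=\frac{1}{|1+\sinh^2 w|^{\gamma/2}}=\frac{1}{|\cosh w|^{\gamma}}.
\]
Factoring $\cosh w=\tfrac12\rme^{-w}(1+\rme^{\pi\sinh\zeta})=\tfrac12\rme^{w}(1+\rme^{-\pi\sinh\zeta})$ and multiplying the two expressions gives $|\cosh w|=\tfrac12|1+\rme^{-\pi\sinh\zeta}|^{1/2}|1+\rme^{\pi\sinh\zeta}|^{1/2}$, whence
\[
|E_1(\DEt{1}(\zeta);\gamma)|=\frac{2^{\gamma}}{|1+\rme^{-\pi\sinh\zeta}|^{\gamma/2}|1+\rme^{\pi\sinh\zeta}|^{\gamma/2}}.
\]
Combined with \eqref{leq:Sinc-case1-alpha}--\eqref{leq:Sinc-case1-beta}, this produces a bound of the above shape with $\gamma=\alpha$ on $\domD_d^{-}$ and with $\gamma=\beta$ on $\domD_d^{+}$.

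It then remains to pass from these \emph{balanced} exponents ($\alpha/2$ on both factors on $\domD_d^{-}$, and $\beta/2$ on both on $\domD_d^{+}$) to the \emph{split} form required in \eqref{ineq:LC-DE-complex}. By the symmetry $\zeta\mapsto-\zeta$, which interchanges $\domD_d^{-}$ and $\domD_d^{+}$, swaps the two exponential factors, and swaps $\alpha$ and $\beta$, I may assume $\mu=\alpha$ and $\nu=\beta$. On $\domD_d^{-}$ one has $\Re(\pi\sinh\zeta)=\pi\sinh(\Re\zeta)\cos(\Im\zeta)<0$, so $|\rme^{\pi\sinh\zeta}|<1$ and the excess factor $|1+\rme^{\pi\sinh\zeta}|^{(\nu-\mu)/2}$ is bounded crudely by $2^{(\nu-\mu)/2}$, contributing a constant $2^{(\mu+\nu)/2}K$ that is dominated by the one below. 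On $\domD_d^{+}$ I instead need a \emph{lower} bound on $|1+\rme^{-\pi\sinh\zeta}|$, and this is where Lemma~\ref{lem:DEfunc-estim} enters: it yields $|1+\rme^{-\pi\sinh\zeta}|\geq(1+\rme^{-\pi\sinh(\Re\zeta)\cos(\Im\zeta)})\cos(\tfrac{\pi}{2}\sin(\Im\zeta))\geq\cos(\tfrac{\pi}{2}\sin d)$, using $\Re\zeta\geq0$ and $|\Im\zeta|<d<\pi/2$. The resulting factor $\{\cos(\tfrac{\pi}{2}\sin d)\}^{-(\nu-\mu)/2}$ is exactly what turns $2^{\nu}K$ into the claimed $L=2^{\nu}K/\{\cos(\tfrac{\pi}{2}\sin d)\}^{(\nu-\mu)/2}$.

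For the real-axis inequality \eqref{ineq:LC-DE-real} the same rewriting applies, but now all quantities are genuinely positive, so no cosine factor is needed: on $x<0$ the excess factor $(1+\rme^{\pi\sinh x})^{(\nu-\mu)/2}<2^{(\nu-\mu)/2}$, while on $x\geq0$ the factor $(1+\rme^{-\pi\sinh x})^{-(\nu-\mu)/2}\leq1$ because $1+\rme^{-\pi\sinh x}\geq1$; taking the larger of the two resulting constants gives $R=2^{\nu}K$.

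I expect the only genuine obstacle to be the complex estimate on the ``$\nu$-side,'' where the crude triangle inequality fails because $1+\rme^{\mp\pi\sinh\zeta}$ can nearly vanish inside the strip; controlling it uniformly is precisely the role of Lemma~\ref{lem:DEfunc-estim}, and securing the sharp lower bound $\cos(\tfrac{\pi}{2}\sin d)$ (rather than a larger, less useful constant) is the delicate point. Everything else reduces to elementary algebra with the hyperbolic identity $1+\sinh^2 w=\cosh^2 w$ and the sign of $\Re(\pi\sinh\zeta)$ on each half-strip.
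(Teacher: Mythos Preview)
Your argument is correct and follows essentially the same route as the paper. The paper's own proof is a one-liner: it observes that the bounds $|F(\zeta)|\leq K|E_1(\DEt{1}(\zeta);\alpha)|$ on $\domD_d^-$ and $|F(\zeta)|\leq K|E_1(\DEt{1}(\zeta);\beta)|$ on $\domD_d^+$ are exactly the hypotheses of a lemma quoted from~\cite{okayama12:_error}, which then outputs the stated $L$ and $R$. What you have written is precisely a proof of that quoted lemma: the identity $1+\sinh^2 w=\cosh^2 w$ gives the balanced bound, and the passage from the balanced exponents $(\gamma/2,\gamma/2)$ to the split exponents $(\alpha/2,\beta/2)$ is handled on one half-strip by the triangle inequality and on the other by the lower bound coming from Lemma~\ref{lem:DEfunc-estim}, yielding the factor $\{\cos(\tfrac{\pi}{2}\sin d)\}^{-(\nu-\mu)/2}$. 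So the content and the key tool (Lemma~\ref{lem:DEfunc-estim}) are the same; you have simply unpacked the cited lemma rather than invoked it.
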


The proof is straightforward
by the next result, because
$|F(\zeta)|\leq K|E_1(\DEt{1}(\zeta);\alpha)|$
and
$|F(\zeta)|\leq K|E_2(\DEt{1}(\zeta);\beta)|$ coincide with
the inequalities in the following lemma.

\begin{lem}[Okayama~{\cite[in the proof of Lemma~5.9]{okayama12:_error}}]
Assume that $F$ is analytic in $\domD_d$ with $0<d<\pi/2$,
and that there exist positive constants $K$, $\alpha$, $\beta$
such that
\begin{align*}
|F(\zeta)|&\leq \frac{K}{|1+\rme^{-\pi\sinh\zeta}|^{\alpha/2} |1+\rme^{\pi\sinh\zeta}|^{\alpha/2}}
\intertext{holds for all $\zeta\in\domD_d^{-}$, and}
|F(\zeta)|&\leq \frac{K}{|1+\rme^{-\pi\sinh\zeta}|^{\beta/2} |1+\rme^{\pi\sinh\zeta}|^{\beta/2}}
\end{align*}
holds for all $\zeta\in\domD_d^{+}$.
Then, $F\in\LC_{L,R,\alpha,\beta}^{\textDEg}(\domD_d)$
with $L=2^{\nu}K/\{\cos(\frac{\pi}{2}\sin d)\}^{(\nu-\mu)/2}$
and $R=2^{\nu}K$,
where $\mu=\min\{\alpha,\,\beta\}$ and $\nu=\max\{\alpha,\,\beta\}$.
\end{lem}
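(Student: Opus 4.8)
The plan is to verify directly that $F$ satisfies the two defining inequalities~\eqref{ineq:LC-DE-complex} and~\eqref{ineq:LC-DE-real} of $\LC_{L,R,\alpha,\beta}^{\textDEg}(\domD_d)$; analyticity of $F$ on $\domD_d$ is assumed outright, so only the size bounds remain. The difficulty is purely algebraic: the two hypotheses are \emph{symmetric}, carrying the common exponent $\alpha/2$ on both factors $|1+\rme^{\mp\pi\sinh\zeta}|$ on $\domD_d^{-}$ and the common exponent $\beta/2$ on both factors on $\domD_d^{+}$, whereas the target~\eqref{ineq:LC-DE-complex} is \emph{asymmetric}, carrying $\alpha/2$ on $|1+\rme^{-\pi\sinh\zeta}|$ and $\beta/2$ on $|1+\rme^{\pi\sinh\zeta}|$. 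I would therefore split $\domD_d=\domD_d^{-}\cup\domD_d^{+}$ and, on each piece, convert the single factor whose exponent disagrees between the hypothesis and the target.

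First I would record two elementary facts. Writing $\zeta=x+\imnum y$ with $|y|<d<\pi/2$, one has $\Re(\pi\sinh\zeta)=\pi\sinh(x)\cos y$, whose sign is that of $x$ since $\cos y>0$. Hence on $\domD_d^{-}$ the factor satisfies $|\rme^{\pi\sinh\zeta}|<1$, so $|1+\rme^{\pi\sinh\zeta}|\le 2$; symmetrically $|1+\rme^{-\pi\sinh\zeta}|\le 2$ on $\domD_d^{+}$. For the opposite direction I would invoke Lemma~\ref{lem:DEfunc-estim}, which after rearrangement yields the lower bounds $|1+\rme^{\pm\pi\sinh\zeta}|\ge(1+\rme^{\pm\pi\sinh(x)\cos y})\cos(\frac{\pi}{2}\sin y)\ge\cos(\frac{\pi}{2}\sin d)$ on all of $\domD_d$, where the last step uses $1+\rme^{\pm\pi\sinh(x)\cos y}\ge 1$ together with $\cos(\frac{\pi}{2}\sin y)\ge\cos(\frac{\pi}{2}\sin d)$ for $|y|<d$.

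With these in hand, the conversion on each region is a one-factor estimate. On $\domD_d^{-}$ the factor needing adjustment is $|1+\rme^{\pi\sinh\zeta}|$, whose exponent must pass from $\alpha/2$ to $\beta/2$: when $\mu=\alpha$ the exponent increases, so I would absorb the discrepancy using the upper bound $|1+\rme^{\pi\sinh\zeta}|\le 2$, at the cost of a factor $2^{(\nu-\mu)/2}$; when $\mu=\beta$ the exponent decreases, and I would use instead the lower bound, at the cost of $\{\cos(\frac{\pi}{2}\sin d)\}^{-(\nu-\mu)/2}$. On $\domD_d^{+}$ the argument is identical with the two factors interchanged. In every sub-case the resulting constant is either $2^{(\nu-\mu)/2}K$ or $K/\{\cos(\frac{\pi}{2}\sin d)\}^{(\nu-\mu)/2}$, and both are dominated by $L=2^{\nu}K/\{\cos(\frac{\pi}{2}\sin d)\}^{(\nu-\mu)/2}$, since $2^{(\nu-\mu)/2}\le 2^{\nu}$ and $\cos(\frac{\pi}{2}\sin d)\le 1$. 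This proves~\eqref{ineq:LC-DE-complex}.

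For the real-axis bound~\eqref{ineq:LC-DE-real} the same trading argument applies with $y=0$, so $\cos(\frac{\pi}{2}\sin y)=1$ and only the factor $2^{(\nu-\mu)/2}$ survives; hence $R=2^{(\nu-\mu)/2}K\le 2^{\nu}K$ is admissible. I expect the main obstacle to be bookkeeping rather than analysis: across the four combinations of region ($\domD_d^{\pm}$) and sign-case ($\mu=\alpha$ versus $\mu=\beta$) one must correctly identify which factor changes exponent and in which direction, so as to apply the elementary upper bound $\le 2$ or the Lemma~\ref{lem:DEfunc-estim} lower bound. No estimate beyond Lemma~\ref{lem:DEfunc-estim} is needed.
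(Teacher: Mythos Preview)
Your argument is correct. The paper does not prove this lemma here; it quotes it verbatim from an earlier work (Okayama~2012, proof of Lemma~5.9) and uses it as a black box. Your proof supplies exactly what is needed: on each half-strip you keep the factor whose exponent already matches the target and trade the other factor up or down, using the elementary bound $|1+\rme^{\pm\pi\sinh\zeta}|\le 2$ on the appropriate half (where $|\rme^{\pm\pi\sinh\zeta}|\le 1$) to raise an exponent, and the lower bound $|1+\rme^{\pm\pi\sinh\zeta}|\ge\cos(\tfrac{\pi}{2}\sin d)$ obtained from Lemma~\ref{lem:DEfunc-estim} to lower one. The four sub-cases are handled correctly, and you rightly observe that the two possible constants $2^{(\nu-\mu)/2}K$ and $K/\{\cos(\tfrac{\pi}{2}\sin d)\}^{(\nu-\mu)/2}$ are each dominated by the stated $L$; the real-axis case follows by setting $y=0$. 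This is precisely the mechanism one expects in the cited reference (and is the DE analogue of the SE lemma quoted just before it in the paper, with $\pi\sinh\zeta$ playing the role of $2\zeta$ and Lemma~\ref{lem:DEfunc-estim} replacing the trivial lower bound $|1+\rme^{\pm 2\zeta}|\ge\cos d$).
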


\subsubsection{Proof in case 2 (Theorem~\ref{thm:DE2-Sinc-explicit})}

The claim of
Theorem~\ref{thm:DE2-Sinc-explicit} follows
from the next lemma.

\begin{lem}
Let the assumptions in Theorem~\ref{thm:DE2-Sinc-explicit}
be fulfilled.
Then, the function $F(\zeta)=f(\DEt{2}(\zeta))$
belongs to $\LC_{L,R,\alpha,\beta}^{\textDEg}(\domD_d)$
with $L=K$ and $R=K$.
\end{lem}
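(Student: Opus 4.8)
The plan is to substitute the DE transformation directly into the assumed bound on $f$ and to simplify algebraically until the result matches the defining inequalities \eqref{ineq:LC-DE-complex} and \eqref{ineq:LC-DE-real} of the space $\LC_{L,R,\alpha,\beta}^{\textDEg}(\domD_d)$. Since $\DEt{2}(\zeta)=\rme^{(\pi/2)\sinh\zeta}$, writing $z=\DEt{2}(\zeta)$ gives the key identity $z^2=\rme^{\pi\sinh\zeta}$, which is precisely the quantity appearing in the target space. This strongly suggests that the bound should fall out almost immediately with $L=R=K$, in exact parallel with the SE-case~2 proof where the membership was read off directly.

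First I would record that the hypothesis $|f(z)|\leq K|E_2(z;\alpha,\beta)|$, combined with $F(\zeta)=f(\DEt{2}(\zeta))$, yields
\[
|F(\zeta)|\leq K\,|E_2(\DEt{2}(\zeta);\alpha,\beta)|
=K\,\frac{|z|^{\alpha}}{|1+z^2|^{(\alpha+\beta)/2}},\qquad z=\DEt{2}(\zeta).
\]
The next step is to split the denominator as $|1+z^2|^{(\alpha+\beta)/2}=|1+z^2|^{\alpha/2}|1+z^2|^{\beta/2}$ and to rewrite the first factor through $|1+z^2|=|z|^2\,|1+z^{-2}|$, so that $|1+z^2|^{\alpha/2}=|z|^{\alpha}\,|1+\rme^{-\pi\sinh\zeta}|^{\alpha/2}$ because $z^{-2}=\rme^{-\pi\sinh\zeta}$. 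The factor $|z|^{\alpha}$ then cancels exactly against the numerator, while $|1+z^2|^{\beta/2}=|1+\rme^{\pi\sinh\zeta}|^{\beta/2}$ follows at once from $z^2=\rme^{\pi\sinh\zeta}$. This delivers \eqref{ineq:LC-DE-complex} with $L=K$.

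For the real bound \eqref{ineq:LC-DE-real}, I would repeat the identical manipulation for $x\in\mathbb{R}$; here every quantity is real and positive, so the modulus bars may be dropped throughout and the same cancellation yields \eqref{ineq:LC-DE-real} with $R=K$. Hence $F\in\LC_{L,R,\alpha,\beta}^{\textDEg}(\domD_d)$ with $L=R=K$, and in view of Theorem~\ref{thm:DE-Sinc-overall} this establishes Theorem~\ref{thm:DE2-Sinc-explicit}.

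There is essentially no obstacle here: the argument is a single algebraic rewriting, and the only point requiring a moment's care is confirming that the $|z|^{\alpha}$ cancellation is legitimate at the level of moduli, so that no branch of the fractional power need be tracked. This is guaranteed precisely because the key identity $|1+z^2|^{\alpha/2}=|z|^{\alpha}\,|1+\rme^{-\pi\sinh\zeta}|^{\alpha/2}$ involves only absolute values, for which the factorization $1+z^2=z^2(1+z^{-2})$ holds unconditionally.
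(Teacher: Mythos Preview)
Your proof is correct and follows essentially the same approach as the paper: the paper's proof simply asserts that from inequality~\eqref{leq:Sinc-case2-alpha-beta} with $i=2$, the defining inequalities~\eqref{ineq:LC-DE-complex} and~\eqref{ineq:LC-DE-real} ``immediately hold with $L=R=K$,'' whereas you have spelled out the underlying algebraic identity $|z|^{\alpha}/|1+z^2|^{(\alpha+\beta)/2}=1/(|1+z^{-2}|^{\alpha/2}|1+z^2|^{\beta/2})$ with $z^2=\rme^{\pi\sinh\zeta}$. The only minor remark is that the lemma itself establishes only the membership $F\in\LC_{L,R,\alpha,\beta}^{\textDEg}(\domD_d)$; the concluding sentence invoking Theorem~\ref{thm:DE-Sinc-overall} to deduce Theorem~\ref{thm:DE2-Sinc-explicit} belongs outside the lemma's proof.
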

\begin{proof}
From inequality~\eqref{leq:Sinc-case2-alpha-beta} with $i=2$,
\eqref{ineq:LC-DE-complex}
and~\eqref{ineq:LC-DE-real} immediately hold with $L=R=K$.
\end{proof}

\subsubsection{Proof in case 3 (Theorem~\ref{thm:DE3-Sinc-explicit})}

In contrast to cases 1 and 2,
the function $F(\zeta)=f(\DEt{3\ddagger}(\zeta))$
does not belong to $\LC_{L,R,\alpha,\beta}^{\textDEg}(\domD_d)$
under the assumptions of
Theorem~\ref{thm:DE3-Sinc-explicit}.
In this case, we split the error into two terms as
\begin{align*}
&\left|
F(x) - \sum_{k=-n}^n F(kh) S(k,h)(x)
\right|\\
&\leq
\left|
F(x) - \sum_{k=-\infty}^{\infty} F(kh) S(k,h)(x)
\right|
+
\left|
\sum_{k=-\infty}^{-n-1} F(kh) S(k,h)(x)
+
\sum_{k=n+1}^{\infty} F(kh) S(k,h)(x)
\right|,
\end{align*}
which are called
the ``discretization error''
and the ``truncation error,'' respectively.
Let us estimate these two terms separately.
For the discretization error,
we require the following function space.
\begin{defn}
Let $\domD_d(\epsilon)$ be a rectangular domain defined
for $0<\epsilon<1$ by
\[
\domD_d(\epsilon)
= \{\zeta\in\mathbb{C}:|\Re\zeta|<1/\epsilon,\, |\Im\zeta|<d(1-\epsilon)\}.
\]
Then, $\Hone(\domD_d)$ denotes the family of all functions $F$
that are analytic in $\domD_d$ such that the norm $\mathcal{N}_1(F,d)$ is finite, where
\[
\mathcal{N}_1(F,d)
=\lim_{\epsilon\to 0}\oint_{\partial \domD_d(\epsilon)} |F(\zeta)||\divv \zeta|.
\]
\end{defn}

The discretization error for
a function $F$ belonging to $\Hone(\domD_d)$
has been estimated as follows.

\begin{thm}[Stenger~{\cite[Theorem~3.1.3]{stenger93:_numer}}]
\label{Thm:Sinc-Infinite-Sum-Approx}
Let $F\in\Hone(\domD_d)$. Then,
\[
    \sup_{x\in\mathbb{R}}
    \left|F(x)-\sum_{k=-\infty}^{\infty}F(kh)S(k,h)(x)\right|
\leq\frac{\mathcal{N}_1(F,d)}{\pi d(1-\rme^{-2\pi d/h})}\rme^{-\pi d/h}.
\]
\end{thm}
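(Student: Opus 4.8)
The plan is to represent the discretization error as a single contour integral over $\partial\domD_d$ and then bound that integral directly, following the classical residue-calculus argument for the Whittaker cardinal series. First I would introduce the kernel
\[
\frac{1}{2\pi\imnum}\cdot\frac{\sin(\pi x/h)}{(\zeta-x)\sin(\pi\zeta/h)},
\]
which, viewed as a function of $\zeta$, has simple poles at $\zeta=x$ and at every $\zeta=kh$ ($k\in\mathbb{Z}$). A short residue computation shows that the pole at $\zeta=x$ contributes $F(x)$, while the pole at $\zeta=kh$ contributes $-F(kh)S(k,h)(x)$, using the identity $\sin[\pi(x/h-k)]=(-1)^k\sin(\pi x/h)$ together with the fact that the residue of $1/\sin(\pi\zeta/h)$ at $kh$ equals $(h/\pi)(-1)^k$.

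Next I would integrate this kernel against $F(\zeta)$ over the boundary $\partial\domD_d(\epsilon)$ of the truncated rectangle. By the residue theorem the integral equals $F(x)-\sum_{|kh|<1/\epsilon}F(kh)S(k,h)(x)$. Letting $\epsilon\to0$, I would show that the contributions of the two vertical edges at $\Re\zeta=\pm1/\epsilon$ vanish, leaving only the integrals along the two horizontal lines $\Im\zeta=\pm d$, while the truncated sum becomes the full bilateral series. This produces the exact representation
\[
F(x)-\sum_{k=-\infty}^{\infty}F(kh)S(k,h)(x)
=\frac{1}{2\pi\imnum}\int_{\partial\domD_d}\frac{F(\zeta)\sin(\pi x/h)}{(\zeta-x)\sin(\pi\zeta/h)}\diff\zeta.
\]

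From here the estimate is routine. On each horizontal line $\Im\zeta=\pm d$ I would use $|\sin(\pi x/h)|\leq1$ for real $x$, the lower bound $|\sin(\pi\zeta/h)|^2=\sin^2(\pi\Re\zeta/h)+\sinh^2(\pi d/h)\geq\sinh^2(\pi d/h)$, and $|\zeta-x|\geq d$. Bounding the magnitude of the integral by $\{2\pi d\sinh(\pi d/h)\}^{-1}$ times $\int_{\partial\domD_d}|F(\zeta)||\divv\zeta|=\mathcal{N}_1(F,d)$, and then rewriting $\sinh(\pi d/h)=\tfrac12\rme^{\pi d/h}(1-\rme^{-2\pi d/h})$, yields exactly the claimed factor $\rme^{-\pi d/h}/\{\pi d(1-\rme^{-2\pi d/h})\}$. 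Taking the supremum over $x\in\mathbb{R}$ then finishes the proof.

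I expect the main obstacle to be the passage $\epsilon\to0$: one must verify that the vertical-edge contributions tend to zero and that the limit may be interchanged with the summation. This is precisely where the hypothesis $F\in\Hone(\domD_d)$ enters, since finiteness of $\mathcal{N}_1(F,d)$ forces $F$ to decay along horizontal lines and guarantees, at least along a suitable sequence $\epsilon\to0$, that the vertical segments make no contribution, so that both the contour integral and the infinite sum converge to their stated limits.
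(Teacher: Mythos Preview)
The paper does not supply its own proof of this theorem; it is quoted verbatim from Stenger's monograph and used as a black box to bound the discretization error in case~3. Your proposal reproduces exactly the classical contour-integral argument that Stenger gives for this result: the kernel $\sin(\pi x/h)\big/[(\zeta-x)\sin(\pi\zeta/h)]$, the residue identification yielding $F(x)-\sum_k F(kh)S(k,h)(x)$, the bounds $|\sin(\pi\zeta/h)|\geq\sinh(\pi d/h)$ and $|\zeta-x|\geq d$ on the horizontal edges, and the rewriting of $\sinh(\pi d/h)$ that produces the stated constant. So your approach is correct and coincides with the source the paper cites; there is nothing to compare on the paper's side.

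One minor remark on the limiting step you flag: finiteness of $\mathcal{N}_1(F,d)$ alone does not literally force pointwise decay of $F$ along every horizontal line, and the vanishing of the vertical-edge contributions is usually obtained either by selecting a suitable subsequence $\epsilon_j\to0$ along which $\int_{-d}^{d}|F(\pm1/\epsilon_j+\imnum y)|\diff y\to0$ (which the finiteness of the perimeter integral does guarantee), or by invoking an equivalent characterization of $\Hone(\domD_d)$ in terms of boundary values. Your parenthetical ``at least along a suitable sequence'' is exactly the right caveat, and with that the argument is complete.
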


This paper shows that $F\in\Hone(\domD_d)$
under the assumptions of Theorem~\ref{thm:DE3-Sinc-explicit}.

\begin{lem}
\label{lem:DE3-Sinc-check}
Let the assumptions in Theorem~\ref{thm:DE3-Sinc-explicit}
be fulfilled.
Then, the function $F(\zeta)=f(\DEt{3\ddagger}(\zeta))$
belongs to $\Hone(\domD_d)$, and $\mathcal{N}_1(F,d)$ is estimated as
\[
 \mathcal{N}_1(F,d)
\leq
\frac{4 K}{\pi^{1-\mu}\mu \cos^{2\mu}(\frac{\pi}{2}\sin d)\cos^{1+\mu} d}.
\]
\end{lem}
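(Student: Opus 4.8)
The plan is to compute $\mathcal{N}_1(F,d)$ directly from its definition as the limit of the contour integral over $\partial\domD_d(\epsilon)$ and to bound the resulting line integrals explicitly. The boundary $\partial\domD_d(\epsilon)$ consists of two horizontal segments on the lines $\Im\zeta=\pm d(1-\epsilon)$ and two vertical segments on $\Re\zeta=\pm1/\epsilon$. Writing $z=\DEt{3\ddagger}(\zeta)=\log(1+\rme^{\pi\sinh\zeta})$ so that $\rme^{-z}=1/(1+\rme^{\pi\sinh\zeta})$, the hypothesis $|f(z)|\le K|z^\mu\rme^{-\mu z}|$ gives the pointwise bound
\[
|F(\zeta)|\le K\,\bigl|\log(1+\rme^{\pi\sinh\zeta})\bigr|^{\mu}\left|\frac{1}{1+\rme^{\pi\sinh\zeta}}\right|^{\mu}.
\]
First I would check that this bound forces $F$ to decay doubly exponentially as $\Re\zeta\to\pm\infty$ inside the strip: as $\Re\zeta\to+\infty$ the factor $|1+\rme^{\pi\sinh\zeta}|^{-\mu}$ decays like $\rme^{-\mu\pi\Re\sinh\zeta}$, while as $\Re\zeta\to-\infty$ the logarithmic factor behaves like $|\rme^{\pi\sinh\zeta}|^{\mu}$. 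This simultaneously establishes that the two vertical segments contribute nothing in the limit $\epsilon\to0$ and that $F\in\Hone(\domD_d)$, so that
\[
\mathcal{N}_1(F,d)=\int_{-\infty}^{\infty}\bigl(|F(x+\imnum d)|+|F(x-\imnum d)|\bigr)\diff x.
\]

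Next I would estimate the integrand on a horizontal line $\Im\zeta=y$ with $|y|\le d$. The factor $|1+\rme^{\pi\sinh\zeta}|^{-\mu}$ is exactly of the form treated in Lemma~\ref{lem:DEfunc-estim}, which replaces it by $(1+\rme^{\pm\pi\sinh(x)\cos y})^{-\mu}$ at the cost of a factor $\{\cos(\tfrac{\pi}{2}\sin y)\}^{-\mu}$; evaluated at $|y|=d$ this accounts for one of the two powers of $\cos(\tfrac{\pi}{2}\sin d)$ in the claimed constant and supplies the double-exponential weight that will make the final integral converge.

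The main obstacle is to control the remaining factor $|\log(1+\rme^{\pi\sinh\zeta})|^{\mu}$, for which none of the earlier lemmas gives a ready-made bound. I expect to treat it by splitting the real axis at $\Re\zeta=0$ according to whether $|\rme^{\pi\sinh\zeta}|=\rme^{\pi\sinh(x)\cos y}$ is at most or at least $1$: on the region where it is at most $1$ I would use the elementary estimate $|\log(1+w)|\le(\text{const})\,|w|$ to extract additional decay, and on the region where it is at least $1$ I would use $|\log(1+w)|\le|\pi\sinh\zeta|+(\text{const})$, so that the logarithm contributes only the subdominant polynomial growth $|\pi\sinh\zeta|^\mu\le(\pi\cosh x)^\mu$. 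Passing from $\zeta$ back to $x$ on the line $\Im\zeta=d$ is where the second factor $\{\cos(\tfrac{\pi}{2}\sin d)\}^{-\mu}$ and the factor $\{\cos d\}^{-\mu}$ enter, through the dependence of $\sinh\zeta$ and $1+\rme^{\pi\sinh\zeta}$ on $d$.

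Finally I would carry out the integration. Using $\mu\le1$ to bound $(\pi\cosh x)^\mu\le\pi^\mu\cosh x$ and the elementary identity
\[
\int_{0}^{\infty}\cosh x\,\rme^{-\mu\pi\cos(d)\sinh x}\diff x=\frac{1}{\mu\pi\cos d},
\]
together with the symmetric contribution from $x\le0$ and from the line $\Im\zeta=-d$, collects the numerical constant $4$, the factor $\pi^{\mu}\cdot\pi^{-1}=\pi^{-(1-\mu)}$, the factor $\mu^{-1}$, and the remaining power $\{\cos d\}^{-1}$. Combining this with the two powers of $\{\cos(\tfrac{\pi}{2}\sin d)\}^{-\mu}$ and $\{\cos d\}^{-\mu}$ produced above yields the stated bound on $\mathcal{N}_1(F,d)$. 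The delicate points, and hence the main work, are the uniform logarithmic estimate on $\overline{\domD_d}$ and the careful bookkeeping of the $\cos d$ and $\cos(\tfrac{\pi}{2}\sin d)$ factors, rather than the convergence of the integrals, which is guaranteed by the double-exponential decay.
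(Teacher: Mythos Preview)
Your overall strategy---reducing $\mathcal{N}_1(F,d)$ to the two horizontal integrals, bounding $|F|$ pointwise via the hypothesis $|f(z)|\le K|z^\mu\rme^{-\mu z}|$, applying Lemma~\ref{lem:DEfunc-estim} to the factor $|1+\rme^{\pi\sinh\zeta}|^{-\mu}$, and integrating the resulting double-exponentially decaying function---is exactly the paper's. The only substantive difference is how the factor $|\log(1+\rme^{\pi\sinh\zeta})|$ is controlled.

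The paper does not split at $\Re\zeta=0$. Instead it proves a dedicated estimate (Lemma~\ref{lem:log-DE}) giving the single uniform bound
\[
|\log(1+\rme^{\pi\sinh(x+\imnum y)})|\le\frac{1}{\cos(\tfrac{\pi}{2}\sin y)\cos y}\cdot\frac{\pi\cosh x}{1+\rme^{-\pi\sinh(x)\cos y}},
\]
obtained by writing the logarithm as $\int_{-\infty}^{x}\pi\cosh(t+\imnum y)\,(1+\rme^{-\pi\sinh(t+\imnum y)})^{-1}\diff t$, bounding the integrand with Lemma~\ref{lem:DEfunc-estim}, and then invoking a nontrivial real-variable inequality (Lemma~\ref{lem:last-inequality}) to replace the resulting $\log(1+\rme^{\pi\sinh(x)\cos y})$ by $\pi\cosh x/(1+\rme^{-\pi\sinh(x)\cos y})$. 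Multiplying by the bound from Lemma~\ref{lem:DEfunc-estim}, raising to the $\mu$th power, and using $\cosh^\mu x\le\cosh x$ (here $\mu\le1$) yields exactly the integrand $\cosh x\,\rme^{-\pi\mu\sinh(|x|)\cos d}$ with the constant claimed; the $4$ is then $2\times2$ from the two lines and the elementary integral you wrote.

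Your splitting approach would show $F\in\Hone(\domD_d)$, but as written it does not recover the stated constant. Two concrete issues: (i) the estimate $|\log(1+w)|\le(\text{const})\,|w|$ is not uniform on $\{|w|\le1\}$ (it blows up as $w\to-1$), so the constant must already depend on $d$, and you have not identified it; the additive constant in your $x\ge0$ bound similarly contaminates the final answer. (ii) On $x\le0$ your bound produces an integrand proportional to $|w|^\mu=\rme^{\mu\pi\sinh(x)\cos y}$ rather than $\cosh x\,\rme^{-\pi\mu\sinh(|x|)\cos y}$, so the ``symmetric contribution from $x\le0$'' is not symmetric under your own treatment, and $\int_{-\infty}^{0}\rme^{\mu\pi\sinh(x)\cos y}\diff x$ does not evaluate to $1/(\mu\pi\cos d)$. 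The paper's unified logarithmic bound sidesteps both problems and is where the real work lies.
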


For the proof, the following inequality is useful.

\begin{lem}
\label{lem:log-DE}
For all real numbers $x$ and $y$ with $|y|<\pi/2$,
we have
\[
|\log(1+\rme^{\pi \sinh(x+\imnum y)})|
\leq \frac{1}{\cos(\frac{\pi}{2}\sin y)\cos y}
\cdot\frac{\pi\cosh x}{1 + \rme^{-\pi\sinh(x)\cos y}}.
\]
\end{lem}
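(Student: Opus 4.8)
The plan is to reduce the complex–argument bound to a one–dimensional integral along the real direction and then to an elementary scalar inequality. Write $w(s)=\pi\sinh(s+\imnum y)$ and note that $\Re w(s)=\pi\sinh(s)\cos y\to-\infty$ as $s\to-\infty$ (since $\cos y>0$ for $|y|<\pi/2$), so $\log(1+\rme^{w(s)})\to 0$; moreover $1+\rme^{w(s)}$ never vanishes on the line $\Im\zeta=y$, so the principal logarithm is well defined and continuous there. Hence, by the fundamental theorem of calculus,
\[
\log(1+\rme^{\pi\sinh(x+\imnum y)})
=\int_{-\infty}^{x}\frac{\pi\cosh(s+\imnum y)\,\rme^{\pi\sinh(s+\imnum y)}}{1+\rme^{\pi\sinh(s+\imnum y)}}\diff s,
\]
and passing the absolute value inside the integral gives the starting bound.

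Next I would estimate the integrand. Using $|\cosh(s+\imnum y)|=\sqrt{\cos^2 y+\sinh^2 s}\le\cosh s$ and rewriting $\rme^{w}/(1+\rme^{w})=1/(1+\rme^{-w})$, the second inequality of Lemma~\ref{lem:DEfunc-estim} yields
\[
\left|\frac{\rme^{\pi\sinh(s+\imnum y)}}{1+\rme^{\pi\sinh(s+\imnum y)}}\right|
\le\frac{1}{(1+\rme^{-\pi\sinh(s)\cos y})\cos(\tfrac{\pi}{2}\sin y)}.
\]
The substitution $p=\sinh s$ (so $\diff p=\cosh s\diff s$) then makes the integral explicit:
\[
\int_{-\infty}^{x}\frac{\cosh s}{1+\rme^{-\pi\sinh(s)\cos y}}\diff s
=\int_{-\infty}^{\sinh x}\frac{\diff p}{1+\rme^{-\pi p\cos y}}
=\frac{\log(1+\rme^{\pi\sinh(x)\cos y})}{\pi\cos y}.
\]
Collecting constants produces the intermediate bound $|\log(1+\rme^{\pi\sinh(x+\imnum y)})|\le\log(1+\rme^{\pi\sinh(x)\cos y})/(\cos(\tfrac\pi2\sin y)\cos y)$.

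It then remains to absorb the logarithm into the claimed right-hand side, i.e.\ to prove the scalar inequality
\[
\log(1+\rme^{a})\le\frac{\pi\cosh x}{1+\rme^{-a}},\qquad a=\pi\sinh(x)\cos y,
\]
equivalently $\phi(a)\le\pi\cosh x$ with $\phi(a)=(1+\rme^{-a})\log(1+\rme^{a})$. The function $\phi$ is nondecreasing, since $\phi'(a)=1-\rme^{-a}\log(1+\rme^{a})\ge 0$ follows from $\log(1+t)\le t$ applied with $t=\rme^a$. Because $\pi\cosh x$ is independent of $y$ and $\cos y\in(0,1]$, monotonicity reduces matters to two cases: for $x\le 0$ one has $a\le 0$, hence $\phi(a)\le\phi(0)=2\log 2<\pi\le\pi\cosh x$; for $x\ge 0$ the largest $a$ (hence the worst case) is $\cos y=1$, i.e.\ $a=\pi\sinh x$, so it suffices to establish $(1+\rme^{-b})\log(1+\rme^{b})\le\sqrt{\pi^2+b^2}$ for $b=\pi\sinh x\ge 0$, using $\pi\cosh x=\sqrt{\pi^2+b^2}$.

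I expect this last one-variable inequality to be the main obstacle: it is elementary but genuinely tight for large $b$, since both sides grow like $b$ while the gap $\sqrt{\pi^2+b^2}-b=\pi^2/(\sqrt{\pi^2+b^2}+b)$ shrinks only like $\pi^2/(2b)$. The comparison must therefore be carried out carefully, for instance by writing $\phi(b)-b=b\rme^{-b}+(1+\rme^{-b})\log(1+\rme^{-b})$ and checking, by standard calculus, that this exponentially decaying remainder stays below $\pi^2/(\sqrt{\pi^2+b^2}+b)$ for all $b\ge 0$. The only other point requiring care is the justification of the integral representation, namely the choice of branch and the convergence of the logarithm at $-\infty$, which is routine.
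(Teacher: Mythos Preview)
Your proposal is correct and follows essentially the same route as the paper: represent $\log(1+\rme^{\pi\sinh(x+\imnum y)})$ as an integral along the horizontal line $\Im\zeta=y$, bound the integrand via $|\cosh(s+\imnum y)|\le\cosh s$ and Lemma~\ref{lem:DEfunc-estim}, evaluate the resulting real integral explicitly, and reduce to the scalar inequality $(1+\rme^{-b})\log(1+\rme^{b})\le\sqrt{\pi^2+b^2}$, which the paper isolates and proves separately as Lemma~\ref{lem:last-inequality} via a case split at $b=4\pi/3$. Your monotonicity observation for $\phi$ is a clean minor variation (it disposes of $x\le 0$ trivially and restricts the scalar inequality to $b\ge 0$, whereas the paper instead uses $(t/\cos y)^2\ge t^2$ and needs the inequality for all real $t$); your sketch for the final inequality is on the right track, but ``by standard calculus'' undersells the work---the paper's verification runs about a page.
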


As the proof is relatively long, it is given at the end of this section.
If we accept this lemma,
Lemma~\ref{lem:DE3-Sinc-check} can be proved as follows.
\begin{proof}
By assumption, clearly $F$ is analytic in $\domD_d$.
In the following, we estimate $\mathcal{N}_1(F,d)$.
Using Lemmas~\ref{lem:DEfunc-estim} and~\ref{lem:log-DE},
we have
\begin{align*}
|F(x+\imnum y)|&=|f(\DEt{3\ddagger}(x+\imnum y))|\\
&\leq K\left|\log(1+\rme^{\pi\sinh(x+\imnum y)})\right|^{\mu}
\left|\frac{1}{1+\rme^{\pi\sinh(x+\imnum y)}}\right|^{\mu}\\
&\leq K \left\{
\frac{1}{\cos(\frac{\pi}{2}\sin y)\cos y}\cdot
\frac{\pi\cosh x}{1+\rme^{-\pi\sinh(x)\cos y}}
\right\}^{\mu}
\left\{
\frac{1}{\cos(\frac{\pi}{2}\sin y)}\cdot
\frac{1}{1+\rme^{\pi\sinh(x)\cos y}}
\right\}^{\mu}\\
&=\frac{K \pi^{\mu}}{\cos^{2\mu}(\frac{\pi}{2}\sin y)\cos^{\mu} y}
\cdot\frac{\cosh^{\mu} x}
{(1+\rme^{-\pi\sinh(x)\cos y})^{\mu}(1+\rme^{\pi\sinh(x)\cos y})^{\mu}}\\
&\leq
\frac{K \pi^{\mu}}{\cos^{2\mu}(\frac{\pi}{2}\sin y)\cos^{\mu} y}
\cdot\frac{\cosh x}{\rme^{\pi\mu\sinh(|x|)\cos y}}.
\end{align*}
From this, for any $\epsilon$ with $0<\epsilon<1$, it holds that
\[
  \lim_{x\to\pm\infty}
\int_{-d(1-\epsilon)}^{d(1-\epsilon)}
|F(x+\imnum y)| d y
\leq  \lim_{x\to\pm\infty}
\frac{K \pi^{\mu}\cosh x}
{\rme^{\pi\mu\sinh(|x|)\cos d(1-\epsilon)}}
\int_{-d(1-\epsilon)}^{d(1-\epsilon)}
\frac{\diff y}{\cos^{2\mu}(\frac{\pi}{2}\sin y)\cos^{\mu} y}
=0.
\]
Therefore, we have
\begin{align*}
 \mathcal{N}_1(F,d)
&=\lim_{y\to d}\int_{-\infty}^{\infty}
|F(x+\imnum y)|\diff x
+\lim_{y\to -d}\int_{-\infty}^{\infty}
|F(x+\imnum y)|\diff x\\
&\leq
\frac{2 K \pi^{\mu}}{\cos^{2\mu}(\frac{\pi}{2}\sin d)\cos^{\mu} d}
\int_{-\infty}^{\infty}
\frac{\cosh x}{\rme^{\pi\mu\sinh(|x|)\cos d}}\diff x
=\frac{4 K \pi^{\mu}}{\cos^{2\mu}(\frac{\pi}{2}\sin d)\cos^{\mu} d}
\cdot
\frac{1}{\pi \mu \cos d},
\end{align*}
which is finite if $0<d<\pi/2$. This completes the proof.
\end{proof}

Combining the above results,
we have the following estimate for the discretization error.

\begin{lem}\label{lem:DE3-Sinc-discretize}
Let the assumptions in Theorem~\ref{thm:DE3-Sinc-explicit}
be fulfilled. Then, setting $F(\zeta)=f(\DEt{3\ddagger}(\zeta))$,
we have
\[
    \sup_{x\in\mathbb{R}}
    \left|F(x)-\sum_{k=-\infty}^{\infty}F(kh)S(k,h)(x)\right|
\leq\frac{4K}{\pi^{2-\mu}d \mu(1-\rme^{-2\pi d/h})\cos^{2\mu}(\frac{\pi}{2}\sin d)\cos^{1+\mu} d}\rme^{-\pi d/h}.
\]
\end{lem}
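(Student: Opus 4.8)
The plan is to recognize that the quantity being bounded is exactly the discretization error, for which Stenger's general result (Theorem~\ref{Thm:Sinc-Infinite-Sum-Approx}) already provides a sharp estimate in terms of a single scalar: the norm $\mathcal{N}_1(F,d)$ of the composed function $F(\zeta)=f(\DEt{3\ddagger}(\zeta))$. That theorem reduces the entire lemma to two ingredients, namely that $F$ belongs to the Hardy-type space $\Hone(\domD_d)$ and that $\mathcal{N}_1(F,d)$ admits a computable upper bound. Once both are in hand, the desired inequality is obtained by a single substitution into the right-hand side of Theorem~\ref{Thm:Sinc-Infinite-Sum-Approx}.

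Both ingredients are supplied by Lemma~\ref{lem:DE3-Sinc-check}, which certifies $F\in\Hone(\domD_d)$ and furnishes the explicit bound $\mathcal{N}_1(F,d)\leq 4K/\{\pi^{1-\mu}\mu\cos^{2\mu}(\tfrac{\pi}{2}\sin d)\cos^{1+\mu}d\}$. Accordingly, the first step is to invoke Lemma~\ref{lem:DE3-Sinc-check} to justify that $F$ lies in $\Hone(\domD_d)$, so that Theorem~\ref{Thm:Sinc-Infinite-Sum-Approx} is applicable. The second step is to insert the norm bound from that lemma into the estimate $\mathcal{N}_1(F,d)\,\rme^{-\pi d/h}/\{\pi d(1-\rme^{-2\pi d/h})\}$ provided by the theorem.

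The only work that remains is the bookkeeping of constants. The prefactor $1/(\pi d)$ appearing in Theorem~\ref{Thm:Sinc-Infinite-Sum-Approx} combines with the $\pi^{-(1-\mu)}$ coming from the norm bound to yield $\pi^{-(2-\mu)}$, while the factors $d$, $\mu$, $(1-\rme^{-2\pi d/h})$, $\cos^{2\mu}(\tfrac{\pi}{2}\sin d)$, $\cos^{1+\mu}d$, and $\rme^{-\pi d/h}$ assemble verbatim into the claimed right-hand side. I do not expect any genuine obstacle at this stage: all of the analytic difficulty—the pointwise majorization of $|F(x+\imnum y)|$, the vanishing of the boundary contributions as $x\to\pm\infty$, and the convergence of the defining integral for $\mathcal{N}_1(F,d)$—was already dispatched in establishing Lemma~\ref{lem:DE3-Sinc-check}. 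The present lemma is therefore an immediate corollary obtained by chaining Theorem~\ref{Thm:Sinc-Infinite-Sum-Approx} with Lemma~\ref{lem:DE3-Sinc-check}.
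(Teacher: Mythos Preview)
Your proposal is correct and matches the paper's approach exactly: the paper also obtains this lemma simply by combining Theorem~\ref{Thm:Sinc-Infinite-Sum-Approx} with the norm bound of Lemma~\ref{lem:DE3-Sinc-check}, and does not even spell out the constant bookkeeping you describe.
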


The truncation error is estimated as follows.

\begin{lem}\label{lem:DE3-Sinc-truncate}
Let the assumptions in Theorem~\ref{thm:DE3-Sinc-explicit}
be fulfilled. Then, setting $F(\zeta)=f(\DEt{3\ddagger}(\zeta))$,
we have
\begin{align*}
\left|
\sum_{k=-\infty}^{-n-1} F(kh) S(k,h)(x)
+
\sum_{k=n+1}^{\infty} F(kh) S(k,h)(x)
\right|
\leq \frac{2^{2-\mu}K\rme^{\frac{\pi}{2}\mu}}{\mu \pi^{1-\mu}h\rme^{nh(1-\mu)}}
\rme^{-\frac{\pi}{2}\mu\exp(nh)}.
\end{align*}
\end{lem}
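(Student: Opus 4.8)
The plan is to reduce the estimate to the two one-sided tail sums. By the triangle inequality together with the elementary bound $|S(k,h)(x)|\leq 1$ (valid for every real $x$, since $|\sin\theta/\theta|\leq 1$), the quantity to be bounded does not exceed
\[
\sum_{k=-\infty}^{-n-1}|F(kh)| + \sum_{k=n+1}^{\infty}|F(kh)|.
\]
Thus the whole task is to control $|F(kh)|=|f(\DEt{3\ddagger}(kh))|$ on the real axis and to sum over the two tails. Since $\DEt{3\ddagger}(x)=\log(1+\rme^{\pi\sinh x})$ is real and positive for real $x$, the hypothesis $|f(z)|\leq K|z^{\mu}\rme^{-\mu z}|$ gives, writing $u=\DEt{3\ddagger}(x)$, the estimate $|F(x)|\leq K u^{\mu}\rme^{-\mu u}$. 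I would split this according to the sign of $x$. For $x<0$ one has $u\leq\rme^{\pi\sinh x}$ (from $\log(1+t)\leq t$) and $\rme^{-\mu u}=(1+\rme^{\pi\sinh x})^{-\mu}\leq 1$, whence $|F(x)|\leq K\rme^{-\mu\pi\sinh|x|}$. For $x>0$ one has $\rme^{-\mu u}\leq\rme^{-\mu\pi\sinh x}$ together with $u\leq\pi\cosh x$, the latter being exactly Lemma~\ref{lem:log-DE} evaluated at $y=0$; hence $|F(x)|\leq K(\pi\cosh x)^{\mu}\rme^{-\mu\pi\sinh x}$.

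Next I would pass from the sums to integrals. Both bounding functions are decreasing on $(0,\infty)$, and the condition $n\geq(\mu\rme)/(2d)$ guarantees $nh=\log(2dn/\mu)\geq 1>0$, so that the evaluation point $nh$ lies in this region. Using $\sum_{k=n+1}^{\infty}g(kh)\leq \frac{1}{h}\int_{nh}^{\infty}g(x)\diff x$ for decreasing positive $g$, the problem reduces to two integrals, each of which I would reduce to the single elementary integral
\[
\int_{nh}^{\infty}\cosh x\,\rme^{-\mu\pi\sinh x}\diff x=\frac{1}{\mu\pi}\rme^{-\mu\pi\sinh(nh)}.
\]
For the negative tail this is immediate after writing $\rme^{-\mu\pi\sinh x}\leq(\cosh x/\cosh(nh))\rme^{-\mu\pi\sinh x}$ (valid for $x\geq nh$), producing the extra factor $1/\cosh(nh)\leq 2\rme^{-nh}$. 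For the positive tail the decisive manipulation is $(\cosh x)^{\mu}=(\cosh x)^{\mu-1}\cosh x\leq(\cosh(nh))^{\mu-1}\cosh x$ for $x\geq nh$, where the hypothesis $\mu\leq 1$ is essential (it makes the exponent $\mu-1$ nonpositive). This isolates the algebraic factor $(\cosh(nh))^{\mu-1}$ while leaving exactly the elementary integral above.

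Finally I would convert to the exponential form claimed, using $\cosh(nh)\geq\tfrac12\rme^{nh}$ and $\sinh(nh)\geq\tfrac12\rme^{nh}-\tfrac12$. The former gives $(\cosh(nh))^{\mu-1}\leq 2^{1-\mu}\rme^{-nh(1-\mu)}$ (again since $\mu-1\leq 0$), and the latter gives $\rme^{-\mu\pi\sinh(nh)}\leq\rme^{\mu\pi/2}\rme^{-\frac{\pi}{2}\mu\exp(nh)}$. Carrying these through, the positive tail is bounded by exactly half of the asserted right-hand side, while the negative tail carries an additional $\rme^{-nh}$ and is therefore comfortably below the same half; adding the two gives the stated bound. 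I expect the main obstacle to be the positive tail: the slowly growing factor $u^{\mu}\leq(\pi\cosh x)^{\mu}$ must be handled so as to extract precisely the algebraic decay $\rme^{-nh(1-\mu)}$ appearing in the denominator, and it is only the convexity-type bound $(\cosh x)^{\mu}\leq(\cosh(nh))^{\mu-1}\cosh x$ under $\mu\leq 1$ that simultaneously renders the integral elementary and keeps the constant tight enough to match $2^{2-\mu}K\rme^{\pi\mu/2}/(\mu\pi^{1-\mu}h\,\rme^{nh(1-\mu)})$.
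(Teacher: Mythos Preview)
Your proof is correct and shares all the essential ingredients with the paper's argument: the reduction via $|S(k,h)(x)|\leq 1$, the bound $\log(1+\rme^{\pi\sinh x})\leq\pi\cosh x/(1+\rme^{-\pi\sinh x})$ from Lemma~\ref{lem:log-DE} at $y=0$, the sum-to-integral comparison, the key step $(\cosh x)^{\mu}\leq(\cosh nh)^{\mu-1}\cosh x$ for $x\geq nh$ using $\mu\leq 1$, and the final inequalities $\cosh(nh)\geq\tfrac12\rme^{nh}$ and $\sinh(nh)\geq\tfrac12(\rme^{nh}-1)$. The only structural difference is that the paper bounds $|F(x)|$ for \emph{all} real $x$ by the single even function
\[
K\,\pi^{\mu}\cosh^{\mu}(x)\,\bigl\{(1+\rme^{-\pi\sinh x})(1+\rme^{\pi\sinh x})\bigr\}^{-\mu},
\]
so that by symmetry the two tails immediately collapse to $2\sum_{k\geq n+1}$ and the constant $2^{2-\mu}$ drops out directly. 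You instead split by the sign of $x$, handle the negative tail with the cruder bound $K\rme^{-\mu\pi\sinh|x|}$, and then argue that this contribution is dominated by the positive-tail bound. Both routes land on the same constant; the paper's symmetric bound is a little cleaner because it avoids the separate negative-tail estimate and the final comparison, while your split makes the role of the elementary inequality $\log(1+t)\leq t$ on the left tail more transparent.
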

\begin{proof}
Using $|S(k,h)(x)|\leq 1$, we have
\[
 \left|
\sum_{k=-\infty}^{-n-1} F(kh) S(k,h)(x)
+
\sum_{k=n+1}^{\infty} F(kh) S(k,h)(x)
\right|
\leq
\sum_{k=-\infty}^{-n-1} |F(kh)|
+
\sum_{k=n+1}^{\infty} |F(kh)|.
\]
Furthermore, from Lemma~\ref{lem:log-DE}, it holds that
\begin{align*}
 |F(x)|&\leq K|\log(1+\rme^{\pi\sinh x})|^{\mu}
\left|\frac{1}{1+\rme^{\pi\sinh x}}\right|^{\mu}
\leq K \frac{\pi^{\mu}\cosh^{\mu} x}{(1+\rme^{-\pi\sinh x})^{\mu}}
\cdot
\frac{1}{(1+\rme^{\pi\sinh x})^{\mu}}.
\end{align*}
Therefore, the desired estimate is obtained as
\begin{align*}
\sum_{k=-\infty}^{-n-1} |F(kh)|
+
\sum_{k=n+1}^{\infty} |F(kh)|
&\leq 2\sum_{k=n+1}^{\infty}
\frac{K \pi^{\mu}\cosh^{\mu} (kh)}{(1+\rme^{-\pi\sinh (kh)})^{\mu}(1+\rme^{\pi\sinh (kh)})^{\mu}}\\
&\leq 2 K \pi^{\mu}
\sum_{k=n+1}^{\infty}
\cosh^{\mu}(kh)\rme^{-\pi\mu\sinh(kh)}\\
&\leq \frac{2 K \pi^{\mu}}{h}\int_{nh}^{\infty}\cosh^{\mu}(x)\rme^{-\pi\mu\sinh x}
\diff x\\
&\leq \frac{2 K \pi^{\mu}}{h\cosh^{1-\mu}(nh)}\int_{nh}^{\infty}\cosh(x)\rme^{-\pi\mu\sinh x}
\diff x\\
&=\frac{2 K \pi^{\mu}}{h\cosh^{1-\mu}(nh)}\cdot\frac{\rme^{-\pi \mu \sinh(nh)}}{\pi \mu}
\leq
\frac{2 K \pi^{\mu}}{h (\rme^{nh}/2)^{1-\mu}}\cdot\frac{\rme^{\frac{\pi}{2}\mu}\rme^{-\frac{\pi}{2} \mu \exp(nh)}}{\pi \mu}.
\end{align*}
This completes the proof.
\end{proof}

Using Lemmas~\ref{lem:DE3-Sinc-discretize}
and~\ref{lem:DE3-Sinc-truncate},
we obtain the desired estimate (Theorem~\ref{thm:DE3-Sinc-explicit})
as follows.
\begin{proof}
Let $F(\zeta)=f(\DEt{3\ddagger}(\zeta))$.
Lemmas~\ref{lem:DE3-Sinc-discretize}
and~\ref{lem:DE3-Sinc-truncate} give the following inequality:
\begin{align*}
& \left|
F(x) - \sum_{k=-n}^n F(kh) S(k,h)(x)
\right|\\
&\leq
\frac{4K}{\pi^{2-\mu}d \mu \cos^{2\mu}(\frac{\pi}{2}\sin d)\cos^{1+\mu} d}
\cdot\frac{\rme^{-\pi d/h}}{1-\rme^{-2\pi d/h}}
+\frac{2^{2-\mu}K\rme^{\frac{\pi}{2}\mu}}{\mu \pi^{1-\mu}}
\cdot\frac{\rme^{-\frac{\pi}{2}\mu\exp(nh)}}{h\rme^{nh(1-\mu)}}.
\end{align*}
Furthermore, by substituting the formula in~\eqref{eq:Def-DE-h-half},
the first term can be estimated as
\[
\frac{\rme^{-\pi d/h}}{1-\rme^{-2\pi d/h}}
=\frac{\rme^{-\pi d n/\log(2 d n/\mu)}}{1-\rme^{-\pi \mu (2 d n/\mu)/\log(2 d n/\mu)}}
\leq
\frac{\rme^{-\pi d n/\log(2 d n/\mu)}}{1-\rme^{-\pi \mu \rme/\log(\rme)}},
\]
and the second term can be estimated as
\begin{align*}
\frac{\rme^{-\frac{\pi}{2}\mu\exp(nh)}}{h\rme^{nh(1-\mu)}}
=\frac{\rme^{-\pi d n}}{(\log(2 d n/\mu)/n) (2 d n/\mu)^{1-\mu}}
&=\frac{(2 d n/\mu)^{\mu}\rme^{-\frac{\pi}{2}\mu (2dn/\mu)(1-1/\log(2 d n/\mu))}}{\log(2 d n/\mu)}
\frac{\mu}{2d}\rme^{-\pi d n/\log(2 d n/\mu)}\\
&\leq
\frac{(\rme)^{\mu}\rme^{-\frac{\pi}{2}\mu (\rme)(1-1/\log(\rme))}}{\log(\rme)}
\frac{\mu}{2d}\rme^{-\pi d n/\log(2 d n/\mu)},
\end{align*}
which completes the proof.
\end{proof}

The remaining task is to prove Lemma~\ref{lem:log-DE}.
For the purpose, the following lemma is needed.
\begin{lem}
\label{lem:last-inequality}
For all $x\in\mathbb{R}$, it holds that
\[
\sqrt{x^2+\pi^2}\geq (1+\rme^{-x})\log(1+\rme^x).
\]
\end{lem}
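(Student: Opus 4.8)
The plan is to prove the inequality by splitting the real line at the origin, since the two sides behave quite differently on the two halves. Writing $h(x) = (1+\rme^{-x})\log(1+\rme^x)$, both sides are nonnegative, so it suffices to compare them (or their squares) directly on each half. A quick look at the asymptotics explains the split and the overall shape of the argument: as $x\to+\infty$ one has $h(x)\to x$ from above while $\sqrt{x^2+\pi^2}\to x$ as well, so the bound is tight only near $+\infty$; whereas as $x\to-\infty$ one has $h(x)\to 1$ while $\sqrt{x^2+\pi^2}\to\infty$, so the negative half is slack. This suggests treating $x\le 0$ by a crude estimate and reserving the real work for $x\ge 0$.

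For $x\le 0$, I would use the elementary inequality $\log(1+t)\le t$ with $t=\rme^x>0$, which gives $\log(1+\rme^x)\le \rme^x$ and hence $h(x)\le (1+\rme^{-x})\rme^x = 1+\rme^x\le 2$. Since $\sqrt{x^2+\pi^2}\ge\pi>2$, the claim follows immediately on the whole negative half-line, with room to spare.

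For $x\ge 0$, I would first rewrite $\log(1+\rme^x)=x+\log(1+\rme^{-x})$ and again apply $\log(1+t)\le t$, now with $t=\rme^{-x}$, to obtain $\log(1+\rme^x)\le x+\rme^{-x}$. Multiplying by $1+\rme^{-x}$ yields $h(x)\le x+A(x)$, where $A(x)=\rme^{-x}(1+x+\rme^{-x})\ge 0$. As both $x+A(x)$ and $\sqrt{x^2+\pi^2}$ are nonnegative, squaring reduces the claim to $2xA(x)+A(x)^2\le \pi^2$. Expanding, every term on the left is a polynomial in $x$ times $\rme^{-x}$ or $\rme^{-2x}$ (namely $x\rme^{-x}$, $x^2\rme^{-x}$, $x\rme^{-2x}$, $\rme^{-x}$, and $\rme^{-2x}$), each of which attains a known finite maximum on $[0,\infty)$ via the standard facts $\max_{x\ge 0}x\rme^{-x}=1/\rme$, $\max_{x\ge 0}x^2\rme^{-x}=4/\rme^2$, and $\max_{x\ge 0}x\rme^{-2x}=1/(2\rme)$. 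Summing these maxima gives a numerical constant comfortably below $\pi^2$, which completes this case.

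The main obstacle is the $x\ge 0$ case: one must control the combination $2xA(x)+A(x)^2$ uniformly, and the cleanest route is the term-by-term maximization just described rather than a direct monotonicity analysis of $h(x)^2-x^2$. The estimate is far from sharp—the true supremum of $h(x)^2-x^2$ is only about $2.2$, well under $\pi^2$—so there is ample slack and only crude bounds are needed; the constant $\pi^2$ is dictated by the downstream use of the lemma (where it arises as the square of a bound on the argument of a complex logarithm) rather than by tightness here.
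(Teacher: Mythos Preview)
Your argument is correct and is substantially more elementary than the paper's. The only cosmetic slip is in the list of terms: expanding $A(x)^2$ produces monomials in $\rme^{-3x}$ and $\rme^{-4x}$ as well, not just $\rme^{-x}$ and $\rme^{-2x}$. This does not matter for the bound---one may simply note that $A$ is decreasing on $[0,\infty)$ (since $A'(x)=-x\rme^{-x}-2\rme^{-2x}<0$), so $A(x)^2\le A(0)^2=4$, and then bound $2xA(x)=2x\rme^{-x}+2x^2\rme^{-x}+2x\rme^{-2x}$ termwise by $2/\rme+8/\rme^2+1/\rme<2.2$, giving $2xA(x)+A(x)^2<6.2<\pi^2$.

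The paper's proof is considerably more intricate. It rewrites the claim as $f(x)=\rme^{x}\sqrt{x^2+\pi^2}-(1+\rme^{x})\log(1+\rme^{x})\ge 0$ and splits at $x=4\pi/3$. On $x\le 4\pi/3$ it studies $p(x)=\sqrt{x^2+\pi^2}-\log(1+\rme^{x})$, proves $p''\ge 0$ via the auxiliary inequality $2\cosh(x/2)>(x^2+\pi^2)^{3/4}/\pi$, deduces $p'\le 0$ from the limit at $+\infty$, and then uses $p(x)\ge p(4\pi/3)>1$ to conclude. On $x>4\pi/3$ it repeats the convexity analysis for $\tilde p(x)=\sqrt{x^2+3^2}-\log(1+\rme^{x})$, combines this with a rationalization trick and a tangent-line bound for the convex function $q(x)=\rme^{x}(\pi^2-9)/(2\sqrt{2}x)$, and finishes with several numerical checks at $x=4\pi/3$. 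Your approach avoids all of this machinery by exploiting the simple overestimate $\log(1+t)\le t$ twice and the fact that the resulting error terms are exponentially damped; what the paper's route buys, if anything, is a sharper qualitative picture of $p(x)$, but that is not needed for the lemma as stated.
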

\begin{proof}
Set $f(x)=\rme^x\sqrt{x^2+\pi^2}-(1+\rme^{x})\log(1+\rme^x)$.
The desired inequality is obtained by showing that $f(x)\geq 0$
for $x\in\mathbb{R}$.
We show this separately in two cases: (i) $x\leq 4\pi/3$ and (ii) $x>4\pi/3$.

\noindent
(i) $x\leq 4\pi /3$.
Set $p(x)=\sqrt{x^2+\pi^2}-\log(1+\rme^x)$.
If we can show that $p(x)>1$, we obtain the desired inequality,
as $f(x)=\rme^x p(x)-\log(1+\rme^x)>\rme^x -\log(1+\rme^x)\geq 0$.
Therefore, we show $p(x)>1$ below. We have
\begin{align*}
 p'(x)&= \frac{x}{\sqrt{x^2+\pi^2}} - \frac{\rme^x}{1+\rme^x},\\
p''(x)&= \frac{\pi^2}{(x^2+\pi^2)^{3/2}} - \frac{1}{\{2\cosh(x/2)\}^2}.
\end{align*}
Furthermore, it holds for all $x\in\mathbb{R}$ that
\[
 2\cosh(x/2)\geq 2 + \frac{x^2}{4}> \frac{1}{\pi}(x^2+\pi^2)^{3/4},
\]
from which $p''(x)\geq 0$ holds for all $x\in\mathbb{R}$.
Therefore, $p'(x)$ is a monotonically increasing function.
Noting that $p'(x)\to 0$ as $x\to\infty$, we see that $p'(x)\leq 0$.
Therefore, $p(x)$ is a monotonically decreasing function.
Thus, for $x\leq 4\pi/3$, we have $p(x)\geq p(4\pi/3)>1$.
This completes the proof for $x\leq 4\pi/3$.

\noindent
(ii) $x> 4\pi /3$. Set $\tilde{p}(x)=\sqrt{x^2+3^2}-\log(1+\rme^x)$.
As it holds for all $x\in\mathbb{R}$ that
\[
 2\cosh(x/2)\geq 2 + \frac{x^2}{4}> \frac{1}{3}(x^2+3^2)^{3/4},
\]
we see that $\tilde{p}(x)$ is a monotonically decreasing function
in the same way as above.
Furthermore, because $\tilde{p}(x)\to 0$ as $x\to\infty$,
$\tilde{p}(x)\geq 0$ holds for all $x\in\mathbb{R}$.
Therefore, we have
\begin{align*}
p(x)&=\sqrt{x^2+\pi^2} - \sqrt{x^2+3^2} + \tilde{p}(x)\\
&\geq \sqrt{x^2+\pi^2} - \sqrt{x^2+3^2} + 0
=\frac{\pi^2-3^2}{\sqrt{x^2+\pi^2}+\sqrt{x^2+3^2}}\\
&\geq
\frac{\pi^2-3^2}{\sqrt{x^2+\pi^2}+\sqrt{x^2+\pi^2}}
\geq
\frac{\pi^2-3^2}{\sqrt{x^2+x^2}+\sqrt{x^2+x^2}}
=\frac{\pi^2-3^2}{2\sqrt{2}x}.
\end{align*}
The last inequality uses $x\geq \pi$.
From this inequality,
setting $q(x)=\rme^{x}(\pi^2-3^2)/(2\sqrt{2}x)$, we have
\[
f(x)=\rme^x p(x) - \log(1+\rme^x)
\geq q(x) - \log(1+\rme^x).
\]
Therefore, the desired inequality is proved if we can show
that $q(x)> \log(1+\rme^x)$ for $x> 4\pi /3$.
Here, we see that $q(x)$ is a convex function for $x>0$, because
\begin{align*}
 q'(x)&=\frac{\rme^x(\pi^2-3^2)(x-1)}{2\sqrt{2}x^2},\\
q''(x)&=\frac{\rme^x(\pi^2-3^2)\{(x-1)^2+1\}}{2\sqrt{2}x^3} > 0.
\end{align*}
Therefore, considering
a tangent line at $x=4\pi/3$, we have
\[
 q(x)
\geq q'(4\pi/3)(x-(4\pi/3)) + q(4\pi/3)
\]
for $x>0$. Furthermore, it holds for $x\geq 4\pi/3$ that
\[
 \log(1+\rme^x) = x + \log(1+\rme^{-x})
\leq  x + \log(1+\rme^{-4\pi/3})
= \left(x-(4\pi/3)\right)+(4\pi/3)
+ \log(1+\rme^{-4\pi/3}).
\]
Thus, from $q'(4\pi/3)> 1$ and $q(4\pi/3)>(4\pi/3) + \log(1+\rme^{-4\pi/3})$,
it holds for $x> 4\pi/3$ that
\[
 q'(4\pi/3)(x-(4\pi/3)) + q(4\pi/3)
>
1\cdot(x-(4\pi/3))+\left\{(4\pi/3)+\log(1+\rme^{4\pi/3})\right\},
\]
which shows $q(x)> \log(1+\rme^x)$.
This completes the proof for $x>4\pi/3$.
\end{proof}

Based on this lemma, we can prove Lemma~\ref{lem:log-DE} as follows.

\begin{proof}
Using $|\cosh(x+\imnum y)|\leq \cosh x$,
Lemma~\ref{lem:DEfunc-estim}, and
\[
 \frac{\divv}{\divv z}\log(1+\rme^{\pi\sinh z})
=\frac{\pi\cosh z}{1+\rme^{-\pi\sinh z}},
\]
we have
\begin{align*}
|\log(1+\rme^{\pi\sinh(x+\imnum y)})|
=\left|\int_{-\infty}^x
 \frac{\pi\cosh(t+\imnum y)}{1+\rme^{-\pi\sinh(t+\imnum y)}}\diff t
\right|
&\leq \int_{-\infty}^x
  \frac{\pi|\cosh(t+\imnum y)|}{|1+\rme^{-\pi\sinh(t+\imnum y)}|}\diff t\\
&\leq \int_{-\infty}^x
  \frac{\pi\cosh t}{(1+\rme^{-\pi\sinh(t)\cos y})\cos(\frac{\pi}{2}\sin y)}
\diff t\\
&=\frac{1}{\cos(\frac{\pi}{2}\sin y)\cos y}\log(1+\rme^{\pi\sinh(x)\cos y}).
\end{align*}
The desired inequality is obtained by showing that
\[
\log(1+\rme^{\pi\sinh(x)\cos y})
\leq \frac{\pi\cosh x}{1+\rme^{-\pi\sinh(x)\cos y}},
\]
which is equal to
\[
 \frac{(1+\rme^{-t})\log(1+\rme^t)}{\sqrt{\pi^2+(t/\cos y)^2}}\leq 1,
\]
where $t=\pi\sinh(x)\cos y$. This inequality can be shown as
\[
 \frac{(1+\rme^{-t})\log(1+\rme^t)}{\sqrt{\pi^2+(t/\cos y)^2}}
\leq
\frac{(1+\rme^{-t})\log(1+\rme^t)}{\sqrt{\pi^2+t^2}}
\leq 1,
\]
where Lemma~\ref{lem:last-inequality} has been used in the last inequality.
This completes the proof.
\end{proof}


\section{Concluding remarks}
\label{sec:conclusion}

The errors of the SE-Sinc approximation
over the infinite and semi-infinite intervals
have been analyzed in Theorems~\ref{thm:SE1-Sinc} and~\ref{thm:SE2-Sinc},
where the constant $C_i$ is not given in an explicit form.
This paper revealed the explicit form of each $C_i$,
enabling us to obtain a rigorous, \emph{mathematically correct}
error bound by computing the right-hand side
of~\eqref{leq:SE1-Sinc} or~\eqref{leq:SE2-Sinc}.
These results are useful for computations with guaranteed accuracy.
Similarly, for the DE-Sinc approximation, this paper presented
Theorems~\ref{thm:DE1-Sinc-explicit}--\ref{thm:DE3-Sinc-explicit},
where the error bounds are given in a computable form.
In case 3, in particular,
although existing studies employed
$\DEt{3}(t)$ or $\DEt{3\dagger}(t)$ as a variable transformation,
we used $\DEt{3\ddagger}(t)$, which improves
the convergence rate of the DE-Sinc approximation.
Note that the error bounds are valid if the assumptions are satisfied,
otherwise we cannot use the error bounds
(see Figure~\ref{Fig:example4}, for example, where the assumptions
of Theorem~\ref{thm:DE1-Sinc-explicit} are not satisfied).

Future work includes the following.
First, in addition to the four cases listed in Section~\ref{sec:introduction},
other cases and corresponding conformal maps
have been considered by Stenger~\cite[Section~1.5.3]{stenger11:_handb}.
Computable error bounds for these cases should be given.
Furthermore, the DE-Sinc approximation for these cases should also be
considered. Second, the SE-Sinc and DE-Sinc approximations
described in this paper assume that $f(t)\to 0$ as $t\to a$
and $t\to b$.
However, there are more general approximation formulas 
that can handle the case where the boundary values of $f$ are
non-zero~\cite{stenger11:_handb,okayama13:_sinc}.
Computable error bounds for these cases are desirable.

\bibliography{Sinc-error-estim-infinite}

\newcommand{\SortNoop}[1]{}
\begin{thebibliography}{15}
\expandafter\ifx\csname natexlab\endcsname\relax\def\natexlab#1{#1}\fi
\providecommand{\url}[1]{\texttt{#1}}
\providecommand{\href}[2]{#2}
\providecommand{\path}[1]{#1}
\providecommand{\DOIprefix}{doi:}
\providecommand{\ArXivprefix}{arXiv:}
\providecommand{\URLprefix}{URL: }
\providecommand{\Pubmedprefix}{pmid:}
\providecommand{\doi}[1]{\href{http://dx.doi.org/#1}{\path{#1}}}
\providecommand{\Pubmed}[1]{\href{pmid:#1}{\path{#1}}}
\providecommand{\bibinfo}[2]{#2}
\ifx\xfnm\relax \def\xfnm[#1]{\unskip,\space#1}\fi
\bibitem[{Stenger(1978)}]{stenger78:_optim}
\bibinfo{author}{F.~Stenger},
\newblock \bibinfo{title}{Optimal convergence of minimum norm approximations in
  {$H_p$}},
\newblock \bibinfo{journal}{Numerische Mathematik} \bibinfo{volume}{29}
  (\bibinfo{year}{1978}) \bibinfo{pages}{345--362}.
\bibitem[{Sugihara(2003)}]{sugihara03:_near}
\bibinfo{author}{M.~Sugihara},
\newblock \bibinfo{title}{Near optimality of the sinc approximation},
\newblock \bibinfo{journal}{Mathematics of Computation} \bibinfo{volume}{72}
  (\bibinfo{year}{2003}) \bibinfo{pages}{767--786}.
\bibitem[{Stenger(1993)}]{stenger93:_numer}
\bibinfo{author}{F.~Stenger}, \bibinfo{title}{Numerical Methods Based on Sinc
  and Analytic Functions}, \bibinfo{publisher}{Springer-Verlag},
  \bibinfo{address}{New York}, \bibinfo{year}{1993}.
\bibitem[{Stenger(2000)}]{stenger00:_summar}
\bibinfo{author}{F.~Stenger},
\newblock \bibinfo{title}{Summary of {Sinc} numerical methods},
\newblock \bibinfo{journal}{Journal of Computational and Applied Mathematics}
  \bibinfo{volume}{121} (\bibinfo{year}{2000}) \bibinfo{pages}{379--420}.
\bibitem[{Lund and Bowers(1992)}]{lund92:_sinc}
\bibinfo{author}{J.~Lund}, \bibinfo{author}{K.~L. Bowers}, \bibinfo{title}{Sinc
  methods for quadrature and differential equations},
  \bibinfo{publisher}{SIAM}, \bibinfo{address}{Philadelphia, PA},
  \bibinfo{year}{1992}.
\bibitem[{Sugihara and Matsuo(2004)}]{sugihara04:_recen}
\bibinfo{author}{M.~Sugihara}, \bibinfo{author}{T.~Matsuo},
\newblock \bibinfo{title}{Recent developments of the {Sinc} numerical methods},
\newblock \bibinfo{journal}{Journal of Computational and Applied Mathematics}
  \bibinfo{volume}{164--165} (\bibinfo{year}{2004}) \bibinfo{pages}{673--689}.
\bibitem[{Mori and Sugihara(2001)}]{mori01:_doubl}
\bibinfo{author}{M.~Mori}, \bibinfo{author}{M.~Sugihara},
\newblock \bibinfo{title}{The double-exponential transformation in numerical
  analysis},
\newblock \bibinfo{journal}{Journal of Computational and Applied Mathematics}
  \bibinfo{volume}{127} (\bibinfo{year}{2001}) \bibinfo{pages}{287--296}.
\bibitem[{Tanaka et~al.(2009)Tanaka, Sugihara, and Murota}]{tanaka09:_desinc}
\bibinfo{author}{K.~Tanaka}, \bibinfo{author}{M.~Sugihara},
  \bibinfo{author}{K.~Murota},
\newblock \bibinfo{title}{Function classes for successful {DE-Sinc}
  approximations},
\newblock \bibinfo{journal}{Mathematics of Computation} \bibinfo{volume}{78}
  (\bibinfo{year}{2009}) \bibinfo{pages}{1553--1571}.
\bibitem[{Takahasi and Mori(1974)}]{takahasi74:_doubl}
\bibinfo{author}{H.~Takahasi}, \bibinfo{author}{M.~Mori},
\newblock \bibinfo{title}{Double exponential formulas for numerical
  integration},
\newblock \bibinfo{journal}{Publications of the Research Institute for
  Mathematical Sciences, {\rm Kyoto University}} \bibinfo{volume}{9}
  (\bibinfo{year}{1974}) \bibinfo{pages}{721--741}.
\bibitem[{Muhammad and Mori(2003)}]{muhammad03:_doubl}
\bibinfo{author}{M.~Muhammad}, \bibinfo{author}{M.~Mori},
\newblock \bibinfo{title}{Double exponential formulas for numerical indefinite
  integration},
\newblock \bibinfo{journal}{Journal of Computational and Applied Mathematics}
  \bibinfo{volume}{161} (\bibinfo{year}{2003}) \bibinfo{pages}{431--448}.
\bibitem[{Okayama et~al.(2013)Okayama, Matsuo, and Sugihara}]{okayama09:_error}
\bibinfo{author}{T.~Okayama}, \bibinfo{author}{T.~Matsuo},
  \bibinfo{author}{M.~Sugihara},
\newblock \bibinfo{title}{Error estimates with explicit constants for {Sinc}
  approximation, {Sinc} quadrature and {Sinc} indefinite integration},
\newblock \bibinfo{journal}{Numerische Mathematik} \bibinfo{volume}{124}
  (\bibinfo{year}{2013}) \bibinfo{pages}{361--394}.
\bibitem[{Okayama(2013)}]{okayama12:_error}
\bibinfo{author}{T.~Okayama},
\newblock \bibinfo{title}{Error estimates with explicit constants for {Sinc}
  quadrature and {Sinc} indefinite integration over infinite intervals},
\newblock \bibinfo{journal}{Reliable Computing} \bibinfo{volume}{19}
  (\bibinfo{year}{2013}) \bibinfo{pages}{45--65}.
\bibitem[{Okayama et~al.(2013)Okayama, Tanaka, Matsuo, and
  Sugihara}]{okayama13:_de_sinc_same}
\bibinfo{author}{T.~Okayama}, \bibinfo{author}{K.~Tanaka},
  \bibinfo{author}{T.~Matsuo}, \bibinfo{author}{M.~Sugihara},
\newblock \bibinfo{title}{{DE-Sinc} methods have almost the same convergence
  property as {SE-Sinc} methods even for a family of functions fitting the
  {SE-Sinc} methods. {Part I}: Definite integration and function
  approximation},
\newblock \bibinfo{journal}{Numerische Mathematik} \bibinfo{volume}{125}
  (\bibinfo{year}{2013}) \bibinfo{pages}{511--543}.
\bibitem[{Stenger(2011)}]{stenger11:_handb}
\bibinfo{author}{F.~Stenger}, \bibinfo{title}{Handbook of Sinc Numerical
  Methods}, \bibinfo{publisher}{CRC Press}, \bibinfo{address}{Boca Raton, FL},
  \bibinfo{year}{2011}.
\bibitem[{Okayama(2013)}]{okayama13:_sinc}
\bibinfo{author}{T.~Okayama},
\newblock \bibinfo{title}{A note on the {Sinc} approximation with boundary
  treatment},
\newblock \bibinfo{journal}{JSIAM Letters} \bibinfo{volume}{5}
  (\bibinfo{year}{2013}) \bibinfo{pages}{1--4}.

\end{thebibliography}

\end{document}